\newtheorem{thm}[equation]{Theorem}
\newtheorem{con}[equation]{Condition}
\newtheorem{prop}[equation]{Proposition}
\newtheorem{defn}[equation]{Definition}
\newtheorem{ass}[equation]{Assumption}
\newtheorem{lem}[equation]{Lemma}
\newtheorem{rem}[equation]{Remark}
\def\R{{\mathbb{R}}}
\def\C{{\mathbb{C}}}
\def\N{{\mathbb{N}}}
\def\Z{{\mathbb{Z}}}
\def\E{{\mathbb{E}}}
\def\P{{\mathbb{P}}}
\def\Ss{{\mathbb{S}}}
\def\B{{\mathcal{B}}}
\def\Cc{{\mathcal{C}}}
\def\D{{\mathcal{D}}}
\def\F{{\mathcal{F}}}
\def\H{{\mathcal{H}}}
\def\L{{\mathcal{L}}}
\def\Rr{{\mathcal{R}}}
\def\S{{\mathcal{S}}}
\def\bf{\bar f}
\def\bm{\bar \mu}
\def\bp{\overline{\psi}}
\def\ct{\mathcal{C}_{\rho}}
\def\bl1{\mathcal{B}_{\rho,\epsilon,\lambda}}
\def\el{{\epsilon, \lambda}}
\def\rel{{\rho,\epsilon,\lambda}}
\newcommand{\supp}{\mbox{supp}}
\newcommand{\Dis}{\mbox{Dis}}
\newcommand{\al}{\alpha}
\newcommand{\be}{\beta}
\newcommand{\ga}{\gamma}
\newcommand{\de}{\delta}
\newcommand{\ka}{\kappa}
\newcommand{\la}{\lambda}
\newcommand{\La}{\Lambda}
\newcommand{\ep}{\epsilon}
\newcommand{\eps}{\varepsilon}
\newcommand{\p}{\psi}
\newcommand{\si}{\sigma}
\newcommand{\vp}{\varphi}
\newcommand{\te}{\theta}
\newcommand{\Te}{\Theta}
\newcommand{\8}{\infty}
\newcommand{\ups}{\Upsilon}
\def\bf{\bar f}
\def\bm{\bar \mu}
\def\bp{\overline{\psi}}
\numberwithin{equation}{section}
\begin{document}

\title{Heavy tail phenomenon and convergence to stable laws iterated Lipschitz maps}\maketitle
\markboth{\normalsize{\textsc{Mariusz Mirek}}}{\normalsize{\textsc{Heavy tail phenomenon and
convergence to stable laws}}}
\begin{center}
\textsc{Mariusz Mirek}\footnote{This research project has been partially supported by Marie Curie
Transfer of Knowledge Fellowship Harmonic Analysis, Nonlinear Analysis and Probability (contract
number MTKD-CT-2004-013389) and by KBN grant N201 012 31/1020.}\\
\ \\

\emph{In memory of Andrzej Hulanicki}
\end{center}

\begin{abstract}
We consider the Markov chain $\{X_n^x\}_{n=0}^\infty$ on $\R^d$ defined by the stochastic recursion
$X_{n}^{x}=\p_{\theta_{n}}(X_{n-1}^{x})$, starting at $x\in\R^d$, where $\theta_{1}, \theta_{2},
\ldots$ are i.i.d. random variables taking their values in a matric space $(\Theta, d)$ and
$\p_{\theta_{n}}:\R^d\mapsto\R^d$ are Lipschitz maps. Assume that the Markov chain has a unique
stationary measure $\nu$. Under appropriate assumptions on $\p_{\theta_n}$ we will show that the
measure $\nu$ has a heavy tail with the exponent $\alpha>0$ i.e. $\nu(\{x\in\R^d: |x|>t\})\asymp
t^{-\alpha}$. Using this result we show that properly normalized Birkhoff sums $S_n^x=\sum_{k=0}^n
X_k^x$, converge in law to an $\alpha$--stable laws for $\alpha\in(0, 2]$.
\end{abstract}

\section{Introduction and Statement of Results}
We consider the Euclidean space $\R^{d}$ endowed with the scalar product $\langle
x,y\rangle=\sum_{i=1}^{d}x_{i}y_{i}$ and the norm $|x|=\sqrt{\langle x, x\rangle}$. An iterated
random function is a sequence of the form
\begin{align}
\nonumber  &X_{0}^{x}=x,\\
\label{rec}&X_{n}^{x}=\p(X_{n-1}^{x}, \te_{n}),
\end{align}
where $n\in\N$ and $\te_{1}, \te_{2},\ldots \in \Te$ are independent and identically distributed
according to the measure $\mu$ on a metric space $\Te=(\Te, d)$. We assume that
$\p:\R^d\times\Te\mapsto\R^d$ is jointly measurable and we write $\p_{\te}(x)=\p(x, \te)$. Then the
sequence $(X_{n}^{x})_{n\geq0}$ is a Markov chain with the state space $\R^d$, the initial
distribution $\delta_x$ and the transition probability $P$ defined by
\begin{align*}
P(x, B)=\int_{\Te}\mathbf{1}_{B}(\p_{\te}(x))\mu(d\te),
\end{align*}
for all $x\in\R^d$ and $B\in \B or(\R^d)$. Unless otherwise stated we assume throughout this paper
that $\p_{\te}:\R^{d}\mapsto \R^{d}$ is a Lipschitz map with the Lipschitz constant $L_{{\te}}<\8$.

Matrix recursions
\begin{equation}\label{matrix}
X_{n}^x=\p_{\te_n}(X_{n-1}^x)=M_nX_{n-1}^x+Q_n\in \R ^d,
\end{equation}
where $\te_n=(M_n, Q_n)\in Gl(\R^d)\times\R^d=\Te$ and $X_{0}^{x}=x$ are probably the best known
examples of the situation we have in mind \cite{BDGHU, DF, Gui, K, K1}.

We are going to describe the asymptotic behavior of Birkhoff sums $S_n^x=\sum_{k=0}^{n}X_{k}^{x}$
of (non independent) random variables $X_{k}^{x}$. We prove that $S_n^x$ normalized appropriately
converge to a stable law (see Theorem  \ref{limitthm}).

The problem has been recently studied in \cite{BDG} for the recursion \eqref{matrix} with $M\in \R
_+ ^{\ast}\times O(\R^d)$ and a central limit theorem has been proved. Depending on the growth of
$M,Q$ a stable law or a Gaussian law appear as the limit. In the first case the heavy tail behavior
of stationary solution at infinity is vital for the proof. (See \cite{BDGHU}).

On the other hand being linear is not that crucial for $\psi
 _{\te}$ and so, it is tempting to generalize the result of
 \cite{BDG} for a larger class of possible $\psi _{\te}$.
 Lipschitz transformations fit perfectly into the scheme --
see examples in section 2 due to Goldie \cite{G} and Borkovec and Kl\"uppelberg \cite{BK}.

The paper is divided into three parts. In the first one (section 3) we describe the support of the
stationary law $\nu $ -- see Theorems \ref{statthm} and \ref{support} below.  Secondly, in section
4 we take care of the tail of $\nu$. (See Theorem \ref{HTthm} saying that $\nu(\{x\in\R^d:
|x|>t\})\asymp t^{-\alpha}$). Finally, sections 5--7 are devoted to the proof the limit Theorem
\ref{limitthm}. The limit law is a stable law with exponent $\al\in(0, 2]$. Thus we generalize
results for one dimensional and multidimensional $"ax+b"$ model stated in \cite{GL1} and \cite{BDG}
respectively. The case where $\al>2$ has been widely investigated in the general context of
complete separable metric spaces by \cite{Be, HH1, HH2, MW} and \cite{WW}. Recently, in \cite{JKO}
the authors proved $\al$ -- stable theorem for $\al\in(0, 2)$ for additive functionals on metric
spaces using martingale approximation method, but our situation does not fit into their framework.

Now we are ready to formulate assumptions and to state theorems. We start with existence of the
stationary solution.

\begin{ass}[\textbf{For the stationary solution}]\label{stationarysolution}\ \\
\begin{enumerate}
\item[(S1)] Lipschitz constant $L_{{\te}}$ is contracting in average i.e.
\begin{align*}
\int_{\Te}\log (L_{{\te}})\mu(d\te)<0.
\end{align*}
\item[(S2)] Moreover,
\begin{align*}
\int_{\Te}|\log(L_{{\te}})|\mu(d\te)<\8.
\end{align*}
\item[(S3)] For some $x_0\in\R^d$,
\begin{align*}
\int_{\Te}\log^+(|\p_{\te}(x_0)|)\mu(d\te)<\8.
\end{align*}
\end{enumerate}
\end{ass}
\noindent Under above assumption recursion (\ref{rec}) has a
stationary solution. More precisely we have the following

\begin{thm}\label{statthm}
Assume that Lipschitz maps $\p_{\te}:\R^{d}\mapsto \R^{d}$ satisfy hypotheses (S1)--(S3). Let
$Y_{n}^x=\p_{\te_1}\circ \p_{\te_2}\circ\ldots\circ \p_{\te_n}(x)$, $n\in\N$. Then there exists a
unique probability measure $\nu$ defined on $\R^d$ such that
\begin{align}\label{statthm1}
\int_{\R^d}\int_{\Te}f(\p_{\te}(x))\mu(d\te)\nu(dx)=\int_{\R^d}f(x)\nu(dx).
\end{align}
for any bounded and continuous function $f$ on $\R^d$. The measure
$\nu$ is the law the random variable $S$
\begin{align}\label{statthm2}
S=S_1=\lim_{n\rightarrow\8}Y_{n}^x=\lim_{n\rightarrow\8}\p_{\te_1}\circ \p_{\te_2}\circ\ldots\circ
\p_{\te_n}(x),
\end{align}
that does not depend on the choice of the starting point
$x\in\R^d$, and $S$ satisfies equation (\ref{rec}) in law.
Moreover,  $S_1=\p_{\te_1}(S_2),$ a.s. where

\begin{equation}\label{statthm3}
S_2=\lim_{n\rightarrow\8}\p_{\te_2}\circ
\p_{\te_3}\circ\ldots\circ \p_{\te_{n}}(x).
\end{equation}
\end{thm}
The proof is standard, see \cite{DF} for complete separable metric
spaces.

Notice that (S1)--(S3) are straightforward generalizations of analogous conditions for recursion
(\ref{matrix}) with $\te=(M, Q)\in Gl(\R^d)\times\R^d$ where $L_{\te}$ and $|\p_{\te}(0)|$ playing
the roles of $|M|$ and $|Q|$ respectively (see \cite{BDGHU, DF, Gui, K}).

\subsection{Support of the stationary measure}\ \\
Let $\L_{\Te}^{\mu}=\overline{\{\p_{\te_1}\circ\ldots\circ\p_{\te_n}(\ \cdot\ ): \forall_{n\in\N}\
\forall_{1\leq i\leq n}\ \te_i\in\supp\mu\}}$ i.e. $\L_{\Te}^{\mu}$ is closed semigroup generated
by the Lipschitz maps $\p_{\te}$ where $\te\in\supp\mu$. Given $\p_{\te}$ with $L_{\theta}<1$ let
$\p_{\te}^{\bullet}$ be the unique fixed point of $\psi _{\theta}$. Then we have the following
\begin{thm}\label{support}
Assume that $\Te\ni\te\mapsto\p_{\te}(x)\in\R^d$ is continuous for every $x\in\R^d$ and
\begin{align*}
\S=\{\p_{\te}^{\bullet}\in\R^d: \p_{\te}(\p_{\te}^{\bullet})=\p_{\te}^{\bullet},\ \mbox{where}\
\p_{\te}\in\L_{\Te}^{\mu}\ \mbox{and}\ L_{\te}<1\}\subseteq\R^d,
\end{align*}
then $\supp\nu=\overline{\S}$.
\end{thm}
Theorem \ref{support} generalizes similar theorems for affine random walks, see \cite{BDGHU} and
\cite{Gui} for more details. The proof is contained in section 3.

\subsection{Heavy tail phenomena}
In this section we state conditions that assure a heavy tail of $\nu$. Contrary to the affine
recursion
\begin{equation}\label{affine}
X_{n}^x=\p_{\te_n}(X_{n-1}^x)=M_nX_{n-1}^x+Q_n\in\R,
\end{equation}
where $\te_n=(M_n, Q_n)\in \R\times\R=\Te$, we need more than just behavior of the Lipschitz
constant $L_{\te}$.

For instance, it is easy to see that the iterative model generated by $\p_n(x)=\sin(L_nx)$ where
$n\in\N$ and $L_n$ are i.i.d. random variables does not have a heavy tail. Indeed, suppose that
every $\p_n$ satisfies (S1)--(S3). Then for $t>1$ we have that $\nu(\{x\in\R:
|x|>t\})=\P(\{|S|>t\})=\P(\{|\sin(LS)|>t\})=0$.

\begin{ass}[\textbf{Shape of the mappings $\p$}]\label{shapefun} For every $t>0$, let
$\p_{\te, t}:\R^d\mapsto\R^d$ be defined by
\begin{align*} \p_{\te, t}(x)=t\p_{\te}(t^{-1}x),
\end{align*}
where $x\in\R^d$ and $\te\in\Te$. $\p_{\te, t}$ are called dilatations of $\p_{\te}$.
\begin{enumerate}
\item[(H1)] For every $\te\in\Te$, there exist a map $\bp_{\te}:\R^d\mapsto\R^d$ such that for every $x\in\R^d$
\begin{align*}
\lim_{t\rightarrow0}\p_{\te, t}(x)=\bp_{\te}(x),
\end{align*}
where $\bp_{\te}(x)=M_{\te}x$ for every $x\in \supp\nu$. Random variable $M_{{\te}}$ takes its
values in group $G=\R_{+}^{\ast}\times K$, where $K$ is closed subgroup of orthogonal group
$O(\R^d)$.
\item[(H2)] For every $\te\in\Te$, there is a random variable $N_{\te}$
such that $\p_{\te}$ satisfies cancellation condition i.e.
\begin{align*}
|\p_{\te}(x)-M_{\te}x|\leq|N_{\te}|,
\end{align*}
for every $x\in\supp\nu$.
\end{enumerate}
\end{ass}

To get the idea what is the meaning of (H1)--(H2) the reader may think of the affine recursion
(\ref{affine}) with $\te=(M, Q) \in G\times\R^d=\Te$ or the recursion $\psi _{\te}(x)=\max\{Mx,
Q\}$ where $\te=(M, Q)\in\R_{+}^{\ast}\times\R=\Te$ (see section 2). Then $\bp_{\te}(x)=Mx$ or
$\bp_{\te}(x)=\max\{Mx,0\}$ respectively. It is recommended to have in mind $\p_{\te}(x)=\max\{Mx,
Q\}$ to get the first approximation of what the hypotheses mean. Notice that for the max recursion
(H2) is not satisfied on $\R $ but only on $[0, \8)\supseteq\supp\nu$.

In one dimensional case condition (H2) has a very natural
geometrical interpretation, namely it can be written in an
equivalent form
\begin{align*}
M_{\te}x-|N_{\te}|\leq\p_{\te}(x)\leq M_{\te}x+|N_{\te}|,
\end{align*}
It means that the graph of $\p_{\te}(x)$'s lies between the graphs
of $M_{\te}x-|N_{\te}|$ and $M_{\te}x+|N_{\te}|$ for every
$x\in\supp\nu$. This allows us to think that the recursion is, in
a sense, close to the affine recursion.

For simplicity we write $X$ instead of $X_{\te}$.
\begin{ass}[\textbf{Moments condition for the heavy tail}]\label{heavytail}
Let $\ka(s)=\E|M|^s$ for $s\in[0, s_{\8})$, where $s_{\8}=\sup\{s\in\R_{+}: \ka(s)<\8\}$. Let $\bm
$ be the law of $M$.
\begin{enumerate}
\item[(H3)] $G$ is the smallest closed semigroup generated by the support of $\bm$ i.e.
$G=\overline{\langle\supp\bm\rangle}$.

\item[(H4)] The conditional law of $\log|M|$, given $M\not=0$ is non arithmetic.

\item[(H5)] $M$ satisfies Cram\'{e}r
condition with exponent $\al>0$, i.e. there exists $\al\in(0, s_{\8})$  such that
\begin{align*}
\ka(\al)=\E(|M|^{\al})=1.
\end{align*}
\item[(H6)] Moreover,
\begin{align*}
\E(|M|^{\al}|\log|M||)<\8.
\end{align*}
\item[(H7)] For the random variable $N$ defined in (H2) we have
\begin{align*}
\E(|N|^{\al})<\8.
\end{align*}
\end{enumerate}
\end{ass}

Conditions (H4)--(H7) are natural in this context, see \cite{BK, BDG, BDGHU, G, Gre, Gri1, Gri2,
GL1, GL2, K} and \cite{V}.
\begin{rem}\label{rem2}
Conditions (H5), (H6) imply that the function $\ka(s)=\E(|M|^s)$ is well defined on $[0,\alpha ]$
and $\ka(0)=\ka(\al)=1$. Since $\ka $ is convex, we have
\begin{align*}
\E\left(\log (|M|)\right) <0,\ \ \ \mbox{and}\ \ \ m_{\al}=\E(|M|^{\al}\log|M|)>0.
\end{align*}
For more details we refer to \cite{G}. Clearly, condition $L\leq|M|$ together with conditions (H2),
(H5) and (H7) imply conditions (S1)--(S3).
\end{rem}
A closed subgroup of $\R_{+}^{\ast}\times O(\R ^d)$ containing
$\R_{+}^{\ast}$ is necessarily $G=\R_{+}^{\ast}\times K$, where
$K$ is closed subgroup of orthogonal group $O(\R^d)$ see e.g.
appendix C in \cite{BDGHU} and appendix A in \cite{BDG}. Let
$\frac{dr}{r}$ be the Haar measure on $\R_{+}^{\ast}$ and $\rho$
be the Haar measure on $K$ such that $\rho(K)=1$. Any element
$g\in\R_{+}^{\ast}\times K$ can be uniquely written as $g=rk$,
where $r\in\R_{+}^{\ast}$ and $k\in K$, and so the Haar measure
$\la$ on $\R_{+}^{\ast}\times K$ is
\begin{align*}
\int_{G}f(g)\la(dg)=\int_{\R_{+}^{\ast}}\int_{K}f(rk)\rho(dk)\frac{dr}{r}.
\end{align*}
Clearly, $G$ is unimodular. Now we are ready to formulate the main result.

\begin{thm}\label{HTthm}
For $\te\in\Te$ assume that $\p_{\te}$ satisfy assumptions \ref{stationarysolution}, \ref{shapefun}
and \ref{heavytail}. Let $S$ be the stationary solution of (\ref{rec}) and let $\nu$ be its law.
Then there exists a Radon measure $\La$ on $\R^d\setminus\{0\}$ such that
\begin{align}\label{HTthm1}
\lim_{|g|\rightarrow 0}|g|^{-\al}\E f(gS)=\lim_{|g|\rightarrow
0}|g|^{-\al}\int_{\R^d}f(gx)\nu(dx)=\int_{\R^d\setminus\{0\}}f(x)\La(dx),
\end{align}
for every function $f\in\F$, where
\begin{align}
\nonumber\F=\{f:\R^d\mapsto\R: f\ \mbox{is measurable function such that $\La(\Dis(f))=0$ and}\\
\label{HTthm2}\sup_{x\in\R^d}|x|^{-\al}|\log|x||^{1+\eps}|f(x)|<\8\ \mbox{for some $\eps>0$}\}.
\end{align}
and $\Dis(f)$ is the set of all discontinuities of function $f$. Moreover, the measure $\La $ is
homogeneous with degree $\alpha$ i.e. for every $g\in G$,
\begin{align}\label{HTthm3}
\int_{\R^d}f(gx)\La(dx)=|g|^{\al}\La(f).
\end{align}
There exists a measure $\si_{\La}$ on $\Ss^{d-1}$ such that $\La$
has the polar decomposition
\begin{align}\label{HTthm4}
\int_{\R^d\setminus\{0\}}f(x)\La(dx)=\int_{0}^{\8}\int_{\Ss^{d-1}}f(rx)\si_{\La}(dx)\frac{dr}{r^{\al+1}},
\end{align}
where
\begin{align}\label{HTthm5}
\si_{\La}(\Ss^{d-1})=\frac{1}{m_\al}\E\left(|\p(S)|^{\al}-|MS|^{\al}\right),
\end{align}
and $m_{\al}=\E(|M|^{\al}\log|M|)\in(0, \8)$. Furthermore recursion defined in (\ref{rec}) has a
heavy tail
\begin{align}\label{HTthm6}
\lim_{t\rightarrow\8}t^{\al}\P(\{|S|>t\})=\frac{1}{\al
m_\al}\E\left(|\p(S)|^{\al}-|MS|^{\al}\right).
\end{align}
If additionally support of $\nu$ is unbounded and one of the following condition is satisfied
\begin{align}
\label{HTthm7}&s_{\8}<\8 \ \ \ \ \mbox{and}\ \ \ \ \ \lim_{s\rightarrow s_{\8}}\frac{\E(|N|^s)}{\ka(s)}=0,\\
\label{HTthm8}&s_{\8}=\8 \ \ \ \ \mbox{and}\ \ \ \ \ \lim_{s\rightarrow
\8}\left(\frac{\E(|N|^s)}{\ka(s)}\right)^{\frac{1}{s}}<\8,
\end{align}
then the measures $\La$ and $\si_{\La}$ are nonzero.
\end{thm}

\begin{rem}\label{rem}
Contrary to Theorems \ref{statthm} and \ref{support} the assumption that $\p_{\te}$ are Lipschitz
is not necessary for Theorem \ref{HTthm}. The same conclusion holds if $\p_{\te}:\R^d\mapsto\R^d$
is continuous for every $\te\in\Te$ and the map $\Te\ni\te\mapsto\p_{\te}(x)\in\R^d$ is continuous
for every $x\in\R^d$, \ref{shapefun} and \ref{heavytail} are satisfied and
$S=\lim_{n\rightarrow\8}\p_{\te_1}\circ \p_{\te_2}\circ\ldots\circ \p_{\te_n}(x)$ exists a.s. and
does not depend on $x\in\R^d$.

In view of Letac's principle \cite{L} the random variable $S$ with law $\nu$ is a unique stationary
solution of the recursion (\ref{rec}).
\end{rem}

Theorem \ref{HTthm} on one hand generalizes Theorem 1.6 of \cite{BDGHU} for multidimensional affine
recursions and on the other, the results of Goldie \cite{G} for a family of one--dimensional
recursions modeled on $ax+b$. (H4)--(H6) were already assumed by Goldie. (H3) was introduced in
\cite{BDGHU} and the whole proof is based on it. (H1)--(H2) say that asymptotically (\ref{rec})
looks like an affine recursion and it allows us to use the methods of \cite{BDGHU}.

On the other hand, the example below shows that for (\ref{HTthm7}) and (\ref{HTthm8}) the
hypothesis that the support of the measure $\nu$ is unbounded is crucial. Consider
$\p_n(x)=A_n\max\{x, B_n\}+C_n$ and assume that
$\P\left(\left\{A_n=\frac{1}{3}\right\}\right)=\frac{3}{4},$
$\P\left(\left\{A_n=2\right\}\right)=\frac{1}{4}$ and
$\P\left(\left\{B_n=\frac{1}{2}\right\}\right)=\P\left(\left\{C_n=-1\right\}\right)=1$. Then
$\E(\log A_n)<0$ and $\E(A_n^{\al})=1$ where $\al\approx1,851$. It is easy to see that the
stationary measure $\nu$ is supported by the set $\left\{-\frac{5}{6}, 0\right\}$ though the
function $\p_n(x)$ is unbounded.

\subsection{Limit theorem for Birhhoff sums} Now we introduce conditions necessary
to obtain convergence in law of appropriately normalized sums $S_n^x=\sum_{k=0}^{n}X_{k}^{x}$ to an
$\al$ -- stable distribution.
\begin{ass}[\textbf{For the limit theorem}]\label{limitass}\ \\
\begin{enumerate}
\item[(L1)] For every $\te\in\Te$ Lipschitz constant $L_{\te}\leq|M_\te |$.
\item[(L2)] For every $\te\in\Te$,
there is a random variable $Q_{\te}$, such that $\bp_{\te}$ satisfies smoothness condition with
respect to $t>0$ i.e.
\begin{align*}
|\p_{\te, t}(x)-\bp_{\te}(x)|\leq|t|\left|Q_{\te}\right|,
\end{align*}
for every $x\in\R^d$.
\item[(L3)] For the random variable $Q$ we have
\begin{align*}
\E(|Q|^{\al})<\8.
\end{align*}
\end{enumerate}
\end{ass}

Clearly, if $\bp_{\te}(x)=M_{\te}x$ for every $x\in \supp\nu$, then (L2) implies (H1) and (L2)
together with (L3) imply (H2) and (H7). Now we are ready to formulate the limit theorem.
\begin{thm}\label{limitthm}
For $\te\in\Te$ suppose that $\p_{\te}$ satisfies assumptions \ref{shapefun}, \ref{heavytail} and
\ref{limitass}. Let $h_v(x)=\E\left(e^{i\left\langle v, \sum_{k=1}^{\8}\bp_{k}\circ\ldots\circ
\bp_{1}(x)\right\rangle}\right)$ for $x\in\R^d$ and $\nu$ be the stationary measure for recursion
(\ref{rec}). Then
\begin{itemize}
\item if $0<\al<1$, then $n^{-\frac{1}{\al}}S_n^x$
converges in law to the $\al$--stable random variable with characteristic function
\begin{align*}
\ups_{\al}(tv)=e^{t^{\al}C_{\al}(v)},
\end{align*}
for any $t>0$ and $v\in\Ss^{d-1}$, where
\begin{align*}
C_{\al}(v)=\int_{\R^d}\left(e^{i\langle v, x\rangle}-1\right)h_v(x)\La(dx).
\end{align*}
\item if $\al=1$ and $\xi(t)=\int_{\R^d}\frac{tx}{1+|tx|^2}\nu(dx)$, then
$n^{-1}S_n^x-n\xi(n^{-1})$ converges in law to
the random variable with characteristic function
\begin{align*}
\ups_{1}(tv)=e^{tC_{1}(v)+it\langle v, \tau(t)\rangle},
\end{align*}
for any $t>0$ and $v\in\Ss^{d-1}$, where
\begin{align*}
C_{1}(v)=\int_{\R^d}\left(\left(e^{i\langle v, x\rangle}-1\right)h_v(x)-\frac{i\langle v,
x\rangle}{1+|x|^2}\right)\La(dx),
\end{align*}
and
\begin{align*}
\tau(t)=\int_{\R^d}\left(\frac{x}{1+|tx|^2}-\frac{x}{1+|x|^2}\right)\La(dx).
\end{align*}
\item if $1<\al<2$ and $m=\int_{\R^d}x\nu(dx)$, then
$n^{-\frac{1}{\al}}\left(S_n^x-nm\right)$ converges in law to the $\al$--stable random variable
with characteristic function
\begin{align*}
\ups_{\al}(tv)=e^{t^{\al}C_{\al}(v)},
\end{align*}
for any $t>0$ and $v\in\Ss^{d-1}$, where
\begin{align*}
C_{\al}(v)=\int_{\R^d}\left(\left(e^{i\langle v, x\rangle}-1\right)h_v(x)-i\langle v,
x\rangle\right)\La(dx).
\end{align*}
\item if $\al=2$ and $m=\int_{\R^d}x\nu(dx)$, then
$\left(n\log n\right)^{-\frac{1}{2}}\left(S_n^x-nm\right)$
converges in law to the random variable with characteristic
function
\begin{align*}
\ups_{2}(tv)=e^{t^2C_{2}(v)},
\end{align*}
for any $t>0$ and $v\in\Ss^{d-1}$, where
\begin{align*}
C_{2}(v)=-\frac{1}{4}\int_{\Ss^{d-1}}\left(\langle v, w\rangle^2+2\langle v, w\rangle\langle v,
\E\left(\vp(w)\right)\rangle\right)\si_{\La}(dw),
\end{align*}
and $\vp(x)=\sum_{k=1}^{\8}\bp_{k}\circ\ldots\circ \bp_{1}(x)$ and $\si_{\La}$ is the measure on
$\Ss^{d-1}$ defined in (\ref{HTthm4}).
\end{itemize}
Moreover, $C_{\al}(tv)=t^{\al}C_{\al}(v)$ for every $t>0$, $v\in\Ss^{d-1}$ and $\al\in(0, 1)\cup(1,
2]$. If $\vp(x)=\sum_{k=1}^{\8}M_{k}\cdot\ldots\cdot M_{1}x$ for every $x\in\supp\La$ and there
exist $w_1,\ldots, w_d\in\supp\si_{\La}$ that span $\R^d$ as a linear space, then $\Re
C_{\al}(v)<0$ for every $v\in\Ss^{d-1}$.
\end{thm}

The proof of the above theorem will be based on the spectral method that was initiated by Nagaev in
\cite{N} and then used and improved by many authors (see
%y described by Hennion and Herv\'{e}
\cite{HH2} for references). The spectral method is based on quasi--compactness of transition
operators $Pf(x)=\E\left(f(\p(x))\right)=\int_{\Te}f(\p_{\te}(x))\mu(d\te)$ on appropriate function
spaces (see \cite{BDG, GL1, HH1, HH2}). They are perturbed by adding Fourier characters.

The standard use of the perturbation theory requires exponential
moments of $\mu $, but there is some development towards $\mu$'s
with polynomial moments \cite{HH1}, or even fractional moments
\cite{GL1}, and \cite{BDG}. They are based on a theorem of Keller
and Liverani \cite{KL}. It says that the spectral properties of
the operator $P$ can be approximated by those of its Fourier
perturbations
\begin{equation}\label{pert}
P_{t,v}f(x)=\E\left(e^{i\langle tv,\p(x)\rangle}f(\p(x))\right)=\int_{\Te}e^{i\langle
tv,\p_{\te}(x)\rangle}f(\p_{\te}(x))\mu(d\te),
\end{equation}
(with convention that $P_{0,v}=P$). Indeed,
\begin{equation}\label{pert1}
P_{t,v}f(x)=k_v(t)\Pi _{P, t} + Q_{P, t},
\end{equation}
where $\lim_{t\to 0}k_v(t)=1$,  $\Pi _{P, t}$ is a projection on a one dimensional subspace and the
spectral radii of $Q_{P, t}$ are smaller than $\varrho <1$ when $t\leq t_0$.
 To obtain Theorem \ref{limitthm} we need to expand the dominant
 eigenvalue $k_v(t)$ at $0$.

When $\al\in(0, 2]$, $k_v(t)$ is neither analytic nor differentiable, hence their asymptotics at
zero is much harder to obtain. The method used in \cite{BDG} does not work here and so we propose
another approach which is applicable to general Lipschitz models (see section 6).

\section{Examples}
The following examples will help the reader to understand the meaning of assumptions formulated in
the introduction as well as to feel the breadth of the method.

\subsection{An affine recursion}
Let $G=\R_{+}^{\ast}\times O(\R^d)$ and take the sequence of i.i.d. random pairs $(A_n,
B_n)_{n\in\N}\subseteq \Te=G\times\R^d$ with the same law $\mu$ on $\Te$ and define the affine map
$\p_n(x)=A_nx+B_n$ where $x\in\R^d$. This example was also widely considered in the context of
discrete subgroups of $\R_{+}^{\ast}$ see \cite{BDGHU} and \cite{BDG}.
\subsection{An extremal recursion}
Let $G=\R_{+}^{\ast}$ and $\Te=G\times\R$. We consider the sequence of i.i.d. pairs $(A_n,
B_n)_{n\in\N}$ with values in $\Te $ and with the law $\mu $ satisfying (S1)--(S3). Let
$\p_n(x)=\max\{A_nx, B_n\}$ where $x\in\R$. Then
\begin{itemize}
\item $\lim_{t\rightarrow0}\p_{n,t}(x)=\bp_n(x)$ where $\bp_n(x)=\max\{M_nx,0\}$ and $M_n=A_n$.
\item The stationary solution $S$ with law $\nu$ is given by the explicit formula,
\begin{align*}
S=\bigvee_{k=1}^{\8}A_1A_2\cdot\ldots\cdot A_{k-1}B_k,
\end{align*}
where $A_0=1$ a.s. \cite{G}.
\item $\P(B>0)>0$, then the $\supp\nu\subseteq[0, \8)$ and is unbounded.
\item In order to check cancellation condition (H2) notice that $S\geq 0$ a.s and for $x>0$
\begin{align*}
|\p_{n, t}(x)-A_nx|&=|\max\{A_nx,
B_n\}-A_nx|\mathbf{1}_{\{A_nx<B_n\}}\\
&\leq\left(|B_n|+|A_nx|\right)\mathbf{1}_{\{A_nx<B_n\}}\leq2|B_n|,
\end{align*}
so (H2) is fulfilled with $|N_n|=2|B_n|$ and we assume (H4)--(H7) for $M_n=A_n$ and $N_n=2B_n$
\item Notice, that $|\p_{n, t}(x)-\bp_n(x)|=|\max\{A_nx, tB_n\}-\max\{A_nx, 0\}|\leq|t||B_n|$,
\end{itemize}
so (L2) is satisfied with $|Q_n|=|B_n|$ and we assume (L3) for $Q_n=B_n$.

\subsection{A model due to Letac} Let $G$ be as above and take the sequence of i.i.d. random triples
$(A_n, B_n, C_n)_{n\in\N}\subseteq\Te=G\times\R_{+}\times\R_{+}$ with the same law $\mu$ on $\Te$.
Consider map $\p_n(x)=A_n\max\{x, B_n\}+C_n$ where $x\in\R$. If $C\geq0$ a.s. and
$\P(B>0)+\P(C>0)>0$, then the support of the stationary measure $\nu$ is unbounded \cite{G}.
Similar consideration as above applied to the Letac model show that our assumptions are satisfied.

\subsection{Another example} Take the sequence of i.i.d. random triples $(A_n, B_n,
C_n)_{n\in\N}$ $\subseteq\Te=\R_{+}^{\ast}\times\R_{+}\times\R_{+}$ with the same law $\mu$ on
$\Te$, such that $C_n-\frac{B_n^2}{4A_n}>0$. Consider map $\p_n(x)=\sqrt{A_nx^2+B_nx+C_n}$ where
$x\in\R$. If $\P(B>0)+\P(C>0)>0$, then the support of the stationary measure $\nu$ is unbounded
\cite{G}.
Conditions (H2) and (L2) can be easily verified.\\

For the above examples statements \ref{support}, \ref{HTthm} and \ref{limitthm} apply
straightforwardly.
\subsection{An autoregressive process with ARCH(1) errors} Now we consider an example described by Borkovec and Kl\"{u}ppelberg in \cite{BK}. For $x\in\R$ let  $\p(x)=\left|\ga
|x|+\sqrt{\be+\la x^2}A\right|$ where $\ga\geq0, \be>0, \la>0$ are constants and $A$ is a symmetric
random variable with continuous Lebesgue density $p$, finite second moment and with the support
equal the whole of $\R$. (Moreover, see section 2. in \cite{BK} for more details). Now consider the
sequence $(\p_n(x))_{n\in\N}$ of i.i.d. copies of $\p(x)$ and observe that
\begin{itemize}
\item $\lim_{t\rightarrow0}\p_{n,t}(x)=\bp_n(x)$, where $\bp_n(x)=M_n|x|$ and $M_n=\left|\ga+\sqrt{\la}A_n\right|$.
\item $|\p_{n,t}(x)-M_n|x||=\left|\left|\ga |x|+\sqrt{\be t^2+\la x^2}A_n\right|-\left|\ga+\sqrt{\la}A_n\right||x|\right|$\\
$\leq|t|\sqrt{\be}|A_n|$,
\end{itemize}
so (L2) holds with $|Q_n|=\sqrt{\be}|A_n|$. Notice that (H2) holds for every $x\in[0, \8)$ with
$|N_n|=\sqrt{\be}|A_n|$. In \cite{BK} the authors showed that it is possible to choose parameters
$\ga\geq0, \be>0, \la>0$ such that $\E\left(\log M_n\right)<0$ and $\E\left(M_n^{\al}\right)=1$ for
some $0<\al\leq2$. Observe that $\P\left(\left\{M_n\in\R_{+}^{\ast}\right\}\right)=1$. We are not
able to verify conditions  (\ref{HTthm7}) and (\ref{HTthm8}) to conclud that $\Lambda $ is not
zero. The latter follows however from \cite{BK} and so Theorem \ref{limitthm} applies.
\section{Stationary measure}
\subsection{Support of the stationary measure} Let $\Cc(\R^d)$ be the set of continuous function on
$\R^d$ and $\Cc_b(\R^d)$ be the set of bounded and continuous function on $\R^d$.

Before proving Theorem \ref{support}, we need two lemmas. Given, $\p_{\te}$ with $L_{\te}<1$, the
Banach fixed point theorem ensures existence of a unique fixed point $\p_{\te}^{\bullet}\in\R^d$ of
the map $\p_{\te}$. Moreover, for every $x\in\R^d$
\begin{align}
\label{banach1}\lim_{n\rightarrow\8}\p_{\te}^{n}(x)=\p_{\te}^{\bullet}.
\end{align}

\begin{lem}\label{banach2}
Assume that for the map $\p_{\te}$ we have $L_{\te}<1$ and $\p\in\L_{\Te}^{\mu}$. Then
\begin{equation}\label{fix}
\lim_{n\rightarrow\8}(\p\circ\p_{\te}^n)^{\bullet}=\p({\p_{\te}^{\bullet}}),
\end{equation}
where $(\p\circ\p_{\te}^n)^{\bullet}\in\R^d$ is the fixed point of
the map $\p\circ\p_{\te}^n$, for $n\in\N$.
\end{lem}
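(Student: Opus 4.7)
The plan is to combine the Banach fixed point theorem with the geometric contraction rate of $\p_\te^n$ toward $\p_\te^\bullet$. Since $\p\in\L_\Te^\mu$, its Lipschitz constant $L_\p$ is finite, and therefore $\p\circ\p_\te^n$ has Lipschitz constant at most $L_\p L_\te^n$, which is strictly less than $1$ for every sufficiently large $n$. For such $n$, the Banach fixed point theorem guarantees that $z_n := (\p\circ\p_\te^n)^\bullet$ exists and is unique, so it remains to prove $z_n \to p$, where $p := \p(\p_\te^\bullet)$ is the candidate limit in \eqref{fix}.

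The key step is the following short estimate. Starting from the fixed point identity $z_n = \p(\p_\te^n(z_n))$ and the equality $\p_\te^n(\p_\te^\bullet) = \p_\te^\bullet$, I would write
$$|z_n - p| = |\p(\p_\te^n(z_n)) - \p(\p_\te^n(\p_\te^\bullet))| \leq L_\p L_\te^n\, |z_n - \p_\te^\bullet|.$$
Inserting the triangle inequality $|z_n - \p_\te^\bullet| \leq |z_n - p| + |p - \p_\te^\bullet|$ and rearranging gives, as soon as $L_\p L_\te^n < 1$,
$$|z_n - p| \leq \frac{L_\p L_\te^n}{1 - L_\p L_\te^n}\, |p - \p_\te^\bullet|,$$
and the right-hand side tends to $0$ since $L_\te < 1$. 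This yields \eqref{fix}.

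The only non-routine point is to check that every $\p \in \L_\Te^\mu$ really carries a finite Lipschitz constant, since without it the composition $\p\circ\p_\te^n$ need not eventually be a contraction. For genuine finite compositions $\p_{\te_1}\circ\cdots\circ\p_{\te_k}$ this is automatic, with $L_\p \leq \prod_{i=1}^{k} L_{\te_i} < \infty$; the closure in the definition of $\L_\Te^\mu$ is then taken in a topology that preserves the Lipschitz bound, so this is a routine verification once that topology is fixed. Beyond this bookkeeping, the argument is entirely elementary, and I would expect the main obstacle to be purely notational rather than conceptual.
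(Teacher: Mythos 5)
Your proof is correct and follows essentially the same approach as the paper: both arguments rest on the contraction constant $L_{\p}L_{\te}^n\to 0$ and arrive at the identical bound $|(\p\circ\p_{\te}^n)^{\bullet}-\p(\p_{\te}^{\bullet})|\leq \frac{L_{\p}L_{\te}^n}{1-L_{\p}L_{\te}^n}\,|\p(\p_{\te}^{\bullet})-\p_{\te}^{\bullet}|$. The only difference is cosmetic — you obtain this bound directly from the fixed point identity $z_n=\p(\p_{\te}^n(z_n))$ and $\p_{\te}^n(\p_{\te}^{\bullet})=\p_{\te}^{\bullet}$, whereas the paper runs the Picard iteration from $\p_{\te}^{\bullet}$ and sums a geometric series before passing to the limit in $m$; your concern about $L_{\p}<\infty$ for $\p\in\L_{\Te}^{\mu}$ is equally implicit in the paper's proof.
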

\begin{proof} Notice that for $n$ sufficiently large
$\p\p_{\te}^n=\p\circ\p_{\te}^n$ is contracting. Fix
$\varepsilon>0$, then there exist $N_{\varepsilon}\in\N$ such that
$\frac{L_{\p}L_{\te}^n}{1-L_{\p}L_{\te}^n}<\varepsilon$ for all
$n\geq N_{\varepsilon}$, where $L_{\p}$ is Lipschitz constant
associated to $\p$. For every $m\in\N$ we have
\begin{align*}
\left|(\p\p_{\te}^n)^m(\p_{\te}^{\bullet})-\p(\p_{\te}^{\bullet})\right|\leq&\\
(L_{\p}L_{\te}^n)^{m-1}\left|\p\p_{\te}^n(\p_{\te}^{\bullet})-\p_{\te}^{\bullet}\right|
+&\left|(\p\p_{\te}^n)^{m-1}(\p_{\te}^{\bullet})-\p(\p_{\te}^{\bullet})\right|\\
\leq\left((L_{\p}L_{\te}^n)^{m-1}+(L_{\p}L_{\te}^n)^{m-2}+\ldots+L_{\p}L_{\te}^n\right)\cdot&\left|\p\p_{\te}^n(\p_{\te}^{\bullet})-\p_{\te}^{\bullet}\right|\\
+&\left|\p\p_{\te}^n(\p_{\te}^{\bullet})-\p(\p_{\te}^{\bullet})\right|\\
=\left((L_{\p}L_{\te}^n)^{m-1}+(L_{\p}L_{\te}^n)^{m-2}+\ldots+L_{\p}L_{\te}^n\right)\cdot&\left|\p(\p_{\te}^{\bullet})-\p_{\te}^{\bullet}\right|\\
\leq\left(\sum_{k=1}^{\8}(L_{\p}L_{\te}^n)^{k}\right)\cdot\left|\p(\p_{\te}^{\bullet})-\p_{\te}^{\bullet}\right|
=&\frac{L_{\p}L_{\te}^n}{1-L_{\p}L_{\te}^n}\cdot\left|\p(\p_{\te}^{\bullet})-\p_{\te}^{\bullet}\right|,
\end{align*}
By (\ref{banach1}) we can find $m\in\N$ such that
$\left|(\p\p_{\te}^n)^{\bullet}-(\p\p_{\te}^n)^m(\p_{\te}^{\bullet})\right|<\varepsilon$.
Then
\begin{align*}
\left|(\p\p_{\te}^n)^{\bullet}-\p({\p_{\te}^{\bullet}})\right|&\leq\left|(\p\p_{\te}^n)^{\bullet}-(\p\p_{\te}^n)^m(\p_{\te}^{\bullet})\right|
+\left|(\p\p_{\te}^n)^m(\p_{\te}^{\bullet})-\p({\p_{\te}^{\bullet}})\right|\\
&\leq\varepsilon+\left|\p(\p_{\te}^{\bullet})-\p_{\te}^{\bullet}\right|\cdot\frac{L_{\p}L_{\te}^n}{1-L_{\p}L_{\te}^n}
\leq\varepsilon\left(1+\left|\p(\p_{\te}^{\bullet})-\p_{\te}^{\bullet}\right|\right),
\end{align*}
for all $n\geq N_{\varepsilon}$. Since $\eps$ is arbitrary, \eqref{fix} is established.
\end{proof}

\begin{lem}\label{banach3}
If $\p_{\te}:\R^d\mapsto\R^d$ is continuous for every $\te\in\Te$ (not necessarily Lipschitz) and
$\Te\ni\te\mapsto\p_{\te}(x)\in\R^d$ is continuous for every $x\in\R^d$, then for every
$\te\in\supp\mu$
\begin{align*}
\p_{\te}[\supp\nu]\subseteq\supp\nu.
\end{align*}
\end{lem}
\begin{proof} Suppose for a contradiction that $\p_{\te}(\supp\nu) \varsubsetneq \supp\nu.$ Then for some
$\te_0\in\supp\mu$ and $x_0\in\supp\nu$ there exists an open neighborhood $U$ of $\p_{\te_0}(x_0)$
such that $U\cap\supp\nu=\emptyset$. Since the measure $\nu$ is $\mu$ stationary, we have
\begin{align*}
0=\nu(U)=\int_{\R^d}\int_{\Te}\mathbf{1}_U(\p_{\te}(x))\mu(d\te)\nu(dx)=\int_{\Te}\int_{\R^d}\mathbf{1}_U(\p_{\te}(x))\nu(dx)\mu(d\te)
\end{align*}
so
\begin{align}\label{ban31}
\int_{\R^d}\mathbf{1}_U(\p_{\te}(x))\nu(dx)=0 \ \ \ \mbox{$\mu$--a.e.}
\end{align}
Since $U\subseteq\R^d$ is open, then the set $\{x\in\R^d: \mathbf{1}_{U}(x)>a\}$ is open in $\R^d$,
so $\mathbf{1}_U(x)$ is lower semi--continuous i.e. $\liminf_{x\rightarrow
x_0}\mathbf{1}_U(x)\geq\mathbf{1}_U(x_0)$. Now we show
\begin{align}\label{ban32}
\Te\ni\te\mapsto\int_{\R^d}\mathbf{1}_U(\p_{\te}(x))\nu(dx)\ \  \mbox{is lower semi--continuous.}
\end{align}
Indeed, take any $(\te_n)_{n\in\N}\subseteq\Te$ such that $\lim_{n\rightarrow\8}d(\te_n, \te_0)=0$,
then by assumption $\lim_{n\rightarrow\8}\p_{\te_n}(x)=\p_{\te_0}(x)$ for every $x\in\R^d$ and it
implies that $\mathbf{1}_U(\p_{\te_0}(x))\leq\liminf_{n\rightarrow\8}\mathbf{1}_U(\p_{\te_n}(x))$.
Now by Fatou lemma
\begin{align*}
\int_{\R^d}\mathbf{1}_U(\p_{\te_0}(x))\nu(dx)\leq
\int_{\R^d}\liminf_{n\rightarrow\8}\mathbf{1}_U(\p_{\te_n}(x))\nu(dx)\leq\liminf_{n\rightarrow\8}\int_{\R^d}\mathbf{1}_U(\p_{\te_n}(x))\nu(dx).
\end{align*}
Hence by the above inequalities (\ref{ban32}) holds and it is equivalent with the fact that
$\{\te\in\Te: \int_{\R^d}\mathbf{1}_U(\p_{\te}(x))\nu(dx)>0\}$ is open subset of $\Te$. By the
property (\ref{ban31}) we have $\mu(\{\te\in\Te:
\int_{\R^d}\mathbf{1}_U(\p_{\te}(x))\nu(dx)>0\})=0$. Since $\p_{\te_0}(x)$ is continuous map, then
$\p_{\te_0}^{-1}[U]$ is an open neighborhood of $x_0\in\supp\nu$, so
\begin{align*}
0<\nu(\p_{\te_0}^{-1}[U])=\int_{\R^d}\mathbf{1}_{\p_{\te_0}^{-1}[U]}(x)\nu(dx)=\int_{\R^d}\mathbf{1}_{U}(\p_{\te_0}(x))\nu(dx),
\end{align*}
It implies that $\te_0\in\{\te\in\Te: \int_{\R^d}\mathbf{1}_U(\p_{\te}(x))\nu(dx)>0\}$ and it is
contradiction with the fact that $\te_0\in\supp\mu.$
\end{proof}
\begin{proof}[Proof of Theorem (\ref{support})]
For $f\in\Cc_b(\R^d)$ by (\ref{banach1}) we have
\begin{align*}
\int_{\R^d}f(\p_{\te}^{n}(x))\nu(dx)\ _{\overrightarrow{n\rightarrow\8}}\
\delta_{{\p_{\te}^{\bullet}}}(f).
\end{align*}
If ${\p_{\te}^{\bullet}}\not\in\supp\nu$, there exist an open neighborhood $U$ of
${\p_{\te}^{\bullet}}$ such that $U\cap\supp\nu=\emptyset$. By Lemma \ref{banach3}, $U\cap
\p_{\te}^n[\supp\nu]\subseteq U\cap \supp\nu=\emptyset$, for any $n\in\N.$ Hence
\begin{align*}
1=\delta_{{\p_{\te}^{\bullet}}}(U)\leq\liminf_{n\rightarrow\8}\int_{\R^d}\mathbf{1}_{U}(\p_{\te}^{n}(x))\nu(dx)=0.
\end{align*}
Therefore, by contradiction  $\overline{\S}\subseteq\supp\nu.$ Now we show the opposite inclusion.
By Lemma \ref{banach2} we know that $\p\left[\ \overline{\S}\ \right]\subseteq\overline{\S}$ for
every $\p\in\L_{\Te}^{\mu}$. Let $\la$ be a probability measure on $\overline{\S}$. Then for any
$f\in\Cc_b(\R^d)$
\begin{align*}
\lim_{n\rightarrow\8}\int_{\R^d}\int_{\Te}\ldots\int_{\Te}f(\p_{\te_1}\circ\ldots\circ
\p_{\te_n}(x))\mu(d\te_1)\ldots\mu(d\te_n)\lambda(dx)=\nu(f),
\end{align*}
hence
\begin{align*}
\nu(\overline{\S})\geq&\limsup_{n\rightarrow\8}\int_{\R^d}\int_{\Te}\ldots\int_{\Te}\mathbf{1}_{\overline{\S}}(\p_{\te_1}\circ\ldots\circ
\p_{\te_n}(x))\mu(d\te_1)\ldots\mu(d\te_n)\lambda(dx)\\
\geq&\limsup_{n\rightarrow\8}\int_{\Te}\ldots\int_{\Te}\la(\overline{\S})\mu(d\te_1)\ldots\mu(d\te_n)=\la(\overline{\S})=1,
\end{align*}
and finally, we get $\nu(\overline{\S})=1$ i.e. $\supp\nu\subseteq\overline{\S}$.
%, because $\supp\nu$ is the smallest closed set of full measure.
\end{proof}
\subsection{Simple properties recursions and their stationary measures}
\begin{lem}\label{series} Let $Y_{n, t}^x=\p_{\te_1, t}\circ \p_{\te_2, t}\circ\ldots\circ \p_{\te_n,
t}(x)$ for any $n\in\N$ and $t>0$. Then,
\begin{align}
\label{series1}|Y_{n,t}^x-Y_{n,t}^y|&\leq \prod_{i=1}^{n} L_{{\te_i}}|x-y|,\\
\label{series2}|Y_{n,t}^x-Y_{n+m,t}^x|&\leq \prod_{i=1}^{n} L_{{\te_i}}|x-\p_{\te_{n+1},t}\circ\ldots\circ\p_{\te_{n+m},t}(x)|,\\
\label{series3}|x-\p_{\te_{n+1},t}\circ\ldots\circ\p_{\te_{n+m},t}(x)|&\leq\sum_{k=1}^{m}\left(\prod_{i=n+1}^{n+k-1}
L_{{\te_i}}\right)|x-\p_{\te_{n+k},t}(x)|,
\end{align}
for any $x, y\in\R^d$ and $m, n\in\N$.
\end{lem}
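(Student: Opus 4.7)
The plan is to reduce everything to the basic observation that the dilatations $\p_{\te,t}$ inherit the Lipschitz constant of $\p_\te$, independent of $t$, and then chain together Lipschitz contractions and telescoping sums.

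First, I would note that for any $t>0$ and any $x,y\in\R^d$,
\begin{align*}
|\p_{\te,t}(x)-\p_{\te,t}(y)|=t\,|\p_{\te}(t^{-1}x)-\p_{\te}(t^{-1}y)|\leq t L_{\te}|t^{-1}x-t^{-1}y|=L_{\te}|x-y|,
\end{align*}
so $\p_{\te,t}$ is Lipschitz with constant at most $L_{\te}$. This is the single fact that powers all three estimates.

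For \eqref{series1}, I would argue by induction on $n$: peeling off the outermost map,
$|Y_{n,t}^x-Y_{n,t}^y|=|\p_{\te_1,t}(Y_{n-1,t}^x)-\p_{\te_1,t}(Y_{n-1,t}^y)|\leq L_{\te_1}|Y_{n-1,t}^x-Y_{n-1,t}^y|$,
and iterating yields the product bound. For \eqref{series2}, I would simply write $Y_{n+m,t}^x=Y_{n,t}^{z}$ with $z:=\p_{\te_{n+1},t}\circ\ldots\circ\p_{\te_{n+m},t}(x)$ and apply \eqref{series1} to the pair $(x,z)$.

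For \eqref{series3}, the move is a telescoping identity. Setting $w_k:=\p_{\te_{n+k+1},t}\circ\ldots\circ\p_{\te_{n+m},t}(x)$ for $0\leq k\leq m-1$ and $w_m:=x$, the triangle inequality gives
\begin{align*}
|x-\p_{\te_{n+1},t}\circ\ldots\circ\p_{\te_{n+m},t}(x)|\leq\sum_{k=1}^{m}\left|\p_{\te_{n+1},t}\circ\ldots\circ\p_{\te_{n+k-1},t}(w_{k-1})-\p_{\te_{n+1},t}\circ\ldots\circ\p_{\te_{n+k-1},t}(\p_{\te_{n+k},t}(w_{k-1}))\right|,
\end{align*}
after inserting (resp. removing) the map $\p_{\te_{n+k},t}$ at the appropriate stage (with the convention that an empty composition is the identity). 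Applying the composition version of \eqref{series1} to each summand bounds it by $\left(\prod_{i=n+1}^{n+k-1}L_{\te_i}\right)|w_{k-1}-\p_{\te_{n+k},t}(w_{k-1})|$. There is no obstacle in principle, but the bookkeeping in the telescoping sum is the only point where one has to be careful: the statement written in \eqref{series3} uses $|x-\p_{\te_{n+k},t}(x)|$ rather than $|w_{k-1}-\p_{\te_{n+k},t}(w_{k-1})|$, so I would simply choose the telescoping in the reverse order, inserting $\p_{\te_{n+k},t}(x)$ for each $k$, which lands exactly on the form stated. This is the only delicate part of the proof; everything else is a clean application of the scale invariance of the Lipschitz constant.
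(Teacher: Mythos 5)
Your proof is correct and follows essentially the same route as the paper, which likewise reduces everything to the observation that $|\p_{\te,t}(x)-\p_{\te,t}(y)|\leq L_{\te}|x-y|$ uniformly in $t$ and then lets \eqref{series1}--\eqref{series3} follow by induction and telescoping. The only wobble is that your displayed telescoping with the $w_k$'s does not quite parse (the outer composition applied to $w_{k-1}$ is constant in $k$), but the fix you state at the end --- decomposing $x-\p_{\te_{n+1},t}\circ\ldots\circ\p_{\te_{n+m},t}(x)$ into the differences $\Phi_{k-1}(x)-\Phi_{k-1}(\p_{\te_{n+k},t}(x))$ with $\Phi_{k-1}=\p_{\te_{n+1},t}\circ\ldots\circ\p_{\te_{n+k-1},t}$ and applying \eqref{series1} to each --- is exactly the right argument.
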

\begin{proof}
It is easy to see that $|\p_{\te,t}(x)-\p_{\te,t}(y)|=|t\p_{\te}(t^{-1}x)-t\p_{\te}(t^{-1}y)|\leq
L_{\te}|x-y|$ for any $x, y\in\R^d$, so (\ref{series1}), (\ref{series2}) and (\ref{series3}) follow
by induction.
\end{proof}
\begin{lem}\label{bound2} Under the assumptions of Remark \ref{rem} for every $\be\in(0, \al)$
\begin{align*}
\left(\E|S|^{\be}\right)^{\frac{1}{\be}}<\8.
\end{align*}
\end{lem}
\begin{proof} Observe that by (H2) $|\p_{\te}(x)|\leq |M_{\te}x|+|N_{\te}|$ for any
$x\in\supp\nu$. Notice also that for any $n\in\N$
\begin{align*}
|\p_{\te_1}\circ \p_{\te_2}\circ\ldots\circ
\p_{\te_n}(x)|\leq|M_{\te_1}\p_{\te_2}\circ\ldots\circ\p_{\te_n}(x)|+|N_{\te_1}|,
\end{align*}
since $\p_{\te_2}\circ\ldots\circ\p_{\te_n}(x)\in\supp\nu$ for any $x\in\supp\nu$. By Theorem
\ref{statthm} we know that $\lim_{n\rightarrow\8}\p_{\te_1}\circ \p_{\te_2}\circ\ldots\circ
\p_{\te_n}(x)=S$ a.e. hence by induction we obtain
\begin{align*}
\left(\E|\p_{\te_1}\circ \p_{\te_2}\circ\ldots\circ \p_{\te_n}(x)|^{\be}\right)^{\frac{1}{\be}}
&\leq\left(1+\left(\ka(\be)\right)^{\frac{1}{\be}}+\ldots+
\left(\ka(\be)\right)^{\frac{n-1}{\be}}\right)\cdot\left(\E(|N|)^{\be}\right)^{\frac{1}{\be}}\\
&=\frac{1-\left(\ka(\be)\right)^{\frac{n}{\be}}}{1-\left(\ka(\be)\right)^{\frac{1}{\be}}}\cdot\left(\E(|N|)^{\be}\right)^{\frac{1}{\be}}<\8.
\end{align*}
Now by the Fatou lemma
\begin{align*}
\left(\E|S|^{\be}\right)^{\frac{1}{\be}}
\leq\liminf_{n\rightarrow\8}\frac{1-\left(\ka(\be)\right)^{\frac{n}{\be}}}{1-\left(\ka(\be)\right)^{\frac{1}{\be}}}\cdot\left(\E(|N|)^{\be}\right)^{\frac{1}{\be}}
=\frac{1}{1-\left(\ka(\be)\right)^{\frac{1}{\be}}}\cdot\left(\E(|N|)^{\be}\right)^{\frac{1}{\be}}<\8.
\end{align*}
\end{proof}
Repeating the above argument we obtain the following
\begin{lem}\label{bound3}
If (H2), (H5), (H7) and (L1) are satisfied, then for every $\be\in(0, \al)$, $x\in\R^d$,
\begin{align*}
\sup_{n\in\N}\left(\E|X_{n,t}^{x}|^{\be}\right)^{\frac{1}{\be}}=\sup_{n\in\N}\left(\E|Y_{n,t}^{x}|^{\be}\right)^{\frac{1}{\be}}<\8.
\end{align*}
where $Y_{n, t}^x=\p_{\te_1, t}\circ \p_{\te_2, t}\circ\ldots\circ \p_{\te_n, t}(x)$ for any
$n\in\N$ and $t>0$.
\end{lem}

\section{The tail measure}
This section deals with heavy tail phenomenon for Lipschitz recursions satisfying assumptions 1.9
and 1.10 modeled on analogous hypotheses needed for matrix recursions \eqref{affine}. (H1) and (H2)
say that recursion \eqref{rec} is in a sense close to an affine recursion with the linear part
$M\in \R ^*_+\times K$. This allows us to treat the multidimensional situation using techniques of
\cite{BDGHU}, in particular a
generalized renewal theorem. %For one dimensional case one may
%wonder why we take the Lipschitz constant $M_{\theta}$ (\textbf{to niekoniecznie musi byc stala
%Lipschitza dla} $\bp_{\te}(x)$) of the limit recursion $\bp_{\te}(x)$ not of $\p_{\te}(x)$
%itself....
Conditions in assumption \ref{heavytail} are typical in considerations of this type and
decide of asymptotic behaviour of stationary measure, especially condition (H5) is crucial. Goldie
and Gr\"{u}bel \cite{GG} show that $\P(\{S>t\})$ can decay exponentially fast to zero if (H5) is
not satisfied.

\begin{proof}[Proof of Theorem (\ref{HTthm})] It is a direct consequence of Theorem \ref{Lalem} (existence of the tail measure
$\La$), Theorem \ref{convthm} (property \eqref{HTthm1} for $f\in\F$), Theorem \ref{polthm} (polar
decomposition for the tail measure $\La$) and Theorem \ref{thmsupp} (nontriviality of the tail
measure $\La$).
\end{proof}

Define convolution of a function $f$ with measure $\mu$ on group $G$ as
\begin{align*}
f\ast\mu(g)=\int_{G}f(gh)\mu(dh).
\end{align*}

\subsection{Existence of the tail measure $\La$}

Given $f\in \Cc_b(\R^d)$ let
\begin{align*}
\bf(g)=\E(f(gS))\ \mbox{and}\ \chi_f(g)=\bf(g)-\bf\ast\bm(g).
\end{align*}
The functions $\bm$ and $\chi_f$ are bounded and continuous. We are going to express function $\bf$
in the terms the potential $U=\sum_{k=0}^{\8}\bm^{\ast k}$. Notice that for any $n\in\N\cup\{0\}$
\begin{align*}
\E(f(gM_{\te_1}M_{\te_2}\cdot\ldots\cdot M_{\te_n}S))=\int_{G}\E(f(ghS))\bm^{\ast
n}(dh)=\bf\ast\bm^{\ast n}(g).
\end{align*}

Now, for an $\eps\in(0, 1]$, we define the set of H\"{o}lder
functions by
\begin{align*}
\H_{\eps}=\{f\in\Cc_b(\R^d): \forall_{x, y\in\R^d}\ &|f(x)-f(y)|\leq C_f|x-y|^{\eps}\\
&\mbox{and $f$ vanish in a neighbourhood of $0$}\}.
\end{align*}

Let $\bm_{\al}(dg)=|g|^{\al}\bm(dg)$. In view of Remark \ref{rem2} $\bm _{\al}$ is a probability
measure with positive mean and $\bm_{\al}^{\ast n}(dg)=|g|^{\al}\bm^{\ast n}(dg)$ for all $n\in
\N$. Let $U_{\al}=\sum_{k=0}^{\8}\bm_{\al}^{\ast k}$ be the potential kernel with respect to
measure $\bm_{\al}$.

The aim of this section is to prove the following

\begin{thm}\label{Lalem}
Given $ f\in \H_{\eps}$ for some $\eps\in(0, 1]$, we write $\chi_{f, \al}(g)=|g|^{-\al}\chi_f(g)$.
Under the assumptions of Remark \ref{rem} we have
\begin{align}\label{La1}
\lim_{|g|\rightarrow 0}|g|^{-\al}\bf(g)=\lim_{|g|\rightarrow
0}U_{\al}(\chi_{f,\al})(g)=\frac{1}{m_{\al}}\int_{G}\chi_{f,\al}(g)\la(dg).
\end{align}
The formula
\begin{align}\label{La2}
\La(f)=\frac{1}{m_{\al}}\int_{G}\chi_{f,\al}(g)\la(dg)=\frac{1}{m_{\al}}\int_{G}|g|^{-\al}(\E
(f(gS)-f(gMS)))\la(dg),
\end{align}
defines a nonnegative Radon measure on $\R^d\setminus\{0\}$, which
is $\al$ homogeneous i.e.
\begin{align}\label{La3}
\int_{\R^d}f(gx)\La(dx)=|g|^{\al}\La(f),  \ \ g\in G.
\end{align}
Every $f\in\H_{\eps}$ is $\La$ integrable. Furthermore,
\begin{align}\label{La4}
\sup_{t>0}t^{\al}\nu(\{x\in\R^d: |x|>t\})<\8,
\end{align}
and
\begin{align}\label{La5}
\sup_{t>0}t^{\al}\La(\{x\in\R^d: |x|>t\})<\8.
\end{align}
\end{thm}

To prove Theorem \ref{Lalem} we will apply a generalized renewal theorem for closed subgroups of
$\R^*_{+}\times K$, where $K$ is a metrizable group not necessarily abelian. Let $D$ be the closed
subgroup of $\R^*_{+}\times K$ and let $\Delta_n=\{g\in D: n<\log|g|\leq n+1\}$ for $n\in\Z$.
\begin{defn}
A bounded Borel function $h$ is $d\Rr i$ (direct Riemann
integrable) on $D$ if
\begin{itemize}
\item the set of discontinuities of $h$ is negligible with respect to the Haar
measure on $D$,
\item $\sum_{n\in\Z}\sup_{g\in\Delta_n}|h(g)|<\8.$
\end{itemize}
\end{defn}
In this context we have the following theorem
\begin{thm}\label{renewalthm}
Assume that $\mu$ is a probability measure on $\R^*_{+}\times K$ such that
\begin{align*}
m=\int_{\R^*_{+}\times K}\log(\mathbf{pr}_{\R^*_{+}}(g))\mu(dg)>0,
\end{align*}
where $\mathbf{pr}_{\R^*_{+}}:\R^*_{+}\times K\rightarrow\R^*_{+}$ is a natural projection onto
$\R^*_{+}$. Then the potential $U=\sum_{k=0}^{\8}\mu^{\ast k}$ is a Radon measure supported by
$D_{\mu}$, where $D_{\mu}$ is closed subgroup generated by $\supp\mu$. Furthermore, for any $d\Rr
i$ function $f$ on $D_{\mu}$ we have
\begin{align*}
\lim_{\mathbf{pr}_{\R^*_{+}}(g)\rightarrow0, g\in D_{\mu}}Uf(g)=\frac{1}{m}\int_{\R^*_{+}\times
K}f(g)\la(dg),
\end{align*}
where $\la$ is Haar measure on $\R^*_{+}\times K$.
\end{thm}
Proof of above theorem can be found in appendix A of \cite{BDGHU}, see also \cite{Gui1} and
\cite{R}.

\begin{lem}\label{drilemma}
Let assume that $0<\eps<s\leq s_{\8}$, $f\in\H_{\eps}$ and
$\eta>0$ such that $\supp f\cap B_{\eta}(0)=\emptyset$, where
$B_{\eta}(0)$ is ball with center $0$ and radius $\eta$. If
$\ka(s)<\8$, $\E\left(|N|^s\right)<\8$ and assumption
\ref{shapefun} holds, then the function
\begin{align}\label{dri0}
\chi_{f, s}(g)=|g|^{-s}\chi_f(g),
\end{align}
is $d\Rr i$ on $G$ and
\begin{align}\label{dri1}
\sum_{n\in\Z}\sup_{g\in\Delta_n}|\chi_{f, s}(g)|\leq CC_f\eta^{\eps-s},
\end{align}
where constant $C$ does not depend on function $f$ and $\eta$.
\end{lem}
\begin{proof}
By the cancellation condition (H2) and H\"{o}lder continuity we have
\begin{align}\label{dri2}
|g|^{-s}|f(gS_1)-f(gM_1S_2)|\leq C_f|g|^{\eps-s}|N_1|^{\eps},
\end{align}
Since function $f$ vanish on some neighborhood of $0$, we can
define a family of random sets $P_n$ for $n\in\Z$ such that
$f(gS_1)=0$ and $f(gM_1S_2)=0$ on $P_n^c$. Let
\begin{equation}\label{Pe}
P_n=\{e^{n+1}(|N_1|+|M_1S_2|)>\eta)\}=\{\omega\in\Omega: n\geq n_0(\omega)\},
\end{equation}
where $n_0=\log\eta-\log(|N_1|+|M_1S_2|)-1$.

Then for $|g|\in(e^n, e^{n+1}]$, by (\ref{dri2}) we obtain
\begin{align*}
|\chi_{f, s}(g)|&\leq\E\left((|f(gS_1)|+|f(gM_1S_2)|)\mathbf{1}_{P_n^c}\right)+C_f\E(e^{-n(s-\eps)}|N_1|^{\eps}\mathbf{1}_{P_n})\\
&=C_f\E(e^{-n(s-\eps)}|N_1|^{\eps}\mathbf{1}_{P_n}).
\end{align*}
Therefore,
\begin{align}\label{dri4}
\sum_{n\in\Z}\sup_{g\in\Delta_n}|\chi_{f, s}(g)|\leq
C_f\E\left(|N_1|^{\eps}\sum_{n\geq
n_0}e^{-n(s-\eps)}\mathbf{1}_{P_n}\right).
\end{align}
By \eqref{Pe} we estimate
\begin{align}\label{dri5}
\sum_{n\geq n_0}e^{-n(s-\eps)}&=e^{-n_0(s-\eps)}\sum_{n\geq
0}e^{-n(s-\eps)}=\frac{e^{-n_0(s-\eps)}}{1-e^{-(s-\eps)}}\\
\nonumber&=\frac{e^{s-\eps}\eta^{\eps-s}}{1-e^{\eps-s}}\cdot\left(|N_1|+|M_1S_1|\right)^{s-\eps}.
\end{align}
In view of H\"{o}lder inequality and independence $M_1$ of $S_2$ we have
\begin{align}\label{dri6}
&\E\left(|N_1|^{\eps}\left(|N_1|+|M_1S_2|\right)^{s-\eps}\right)\leq D<\8.
\end{align}
Finally combining (\ref{dri4}), (\ref{dri5}) and (\ref{dri6}) we have
\begin{align*}
\sum_{n\in\Z}\sup_{g\in\Delta_n}|\chi_{f, s}(g)|&\leq C_f\E\left(|N_1|^{\eps}\sum_{n\geq
n_0}e^{-n(s-\eps)}\mathbf{1}_{P_n}\right)\\
&\leq
C_f\frac{e^{s-\eps}}{1-e^{\eps-s}}\eta^{\eps-s}\E\left(|N_1|^{\eps}\cdot(|N_1|+|M_1S_2|)^{s-\eps}\right)\\
&\leq C_f\frac{e^{s-\eps}}{1-e^{\eps-s}}D\eta^{\eps-s}=C_fC\eta^{\eps-s},
\end{align*}
where $C=\frac{e^{s-\eps}}{1-e^{\eps-s}}D.$
\end{proof}
\begin{lem}\label{potentiallemma}
Given $f\in\H_{\eps}$ for some $\eps\in(0, 1]$, under the assumptions of Remark \ref{rem} the
function $\chi_{f, \al}$ is $U_{\al}$ integrable and for every $g\in G$
\begin{align}
\label{plem1} \bf(g)=\sum_{n=1}^{\8}\chi_f\ast\bm^{\ast n}(g)=U(\chi_f)(g)\ \ \ \mbox{and}\\
\label{plem2} |g|^{-\al}\bf(g)=\sum_{k=0}^{\8}\chi_{f,\al}\ast\bm_{\al}^{\ast
k}(g)=U_{\al}(\chi_{f,\al})(g).
\end{align}
\end{lem}
\begin{proof} For the proof we refer to the \cite{BDGHU}.
\end{proof}

\begin{proof}[Proof of Theorem (\ref{Lalem})]
Formula (\ref{plem2}) and the Renewal Theorem \ref{renewalthm}
applied to the potential associated with the measure $\bm_{\al}$
give
\begin{align*}
\lim_{|g|\rightarrow 0}|g|^{-\al}\bf(g)=\lim_{|g|\rightarrow
0}U_{\al}(\chi_{f,\al})(g)=\frac{1}{m_{\al}}\int_{G}\chi_{f,\al}(g)\la(dg),
\end{align*}
and so (\ref{La1}) holds. Theorem \ref{renewalthm} ensures also
that the linear functional
\begin{align*}
f\rightarrow\frac{1}{m_{\al}}\int_{G}\chi_{f,\al}(g)\la(dg),
\end{align*}
defines a nonnegative Radon measure $\La$ on $\R^d\setminus\{0\}$, given by the explicit formula
\begin{align*}
\La=\frac{1}{m_{\al}}\left((|\cdot|^{-\al}\la)\ast(\nu-\bm\ast\nu)\right).
\end{align*}
If $f\in\H_{\eps}$, then $|f|\in\H_{\eps}$ and by Lemma \ref{drilemma} function $\chi_{|f|, \al}$
is $d\Rr i$ hence it is $\la$ integrable.
%It means that any $f\in\H_{\eps}$ is $\La$ integrable.
In order to show that $\La$ is $\al$ homogeneous we define the
measure $\La^s$ on $\R^d\setminus\{0\}$ by
\begin{align}\label{La6}
\La^s(f)=\frac{1}{m_\al}\int_{G}|g|^{-s}(\E
(f(gS)-f(gMS)))\la(dg).
\end{align}
We will show that the measures $\La^s$ converge weakly to measure $\La$ when $s\nearrow\al^{-}$.
%Remember, if $f\in\H_{\eps}$, then there exist $\eta>0$ such that $\supp f\subseteq B_{\eta}(0)^c$.
Since $f$ vanishes in a neighborhood of $0$ and $\rho(K)=1$, then for $0<\eps<s<\al$
\begin{align}\label{La7}
\int_{G}|g|^{-s}\E\left(|gS|^{\eps}\mathbf{1}_{B_\eta(0)^c}(gS)\right)\la(dg)&\leq
\E\left(|S|^{\eps}\int_{\frac{\eta}{|S|}}^{\8}x^{\eps-s-1}dx\right)\\
\nonumber&=\frac{\eta^{\eps-s}}{s-\eps}\E\left(|S|^s\right)<\8,
\end{align}
hence (\ref{La7}) implies that $\int_{G}|g|^{-s}\E(f(gS)-f(gMS))\la(dg)$ is finite for every
$s<\al$, and converge to $\La(f)$.  Notice that the measures $\La^s$ are also $s$ homogeneous.
Indeed for every $f\in\H_{\eps}$
\begin{align}\label{La9}
\int_{\R^d}f(hx)\La^s(dx)&=\frac{1}{m_\al}\int_{G}|g|^{-s}\E
(f(hgS)-f(hgMS))\la(dg)\\
\nonumber&=|h|^s\int_{\R^d}f(x)\La^s(dx),
\end{align}
so  (\ref{La9}) implies (\ref{La3}). In order to show (\ref{La4})
and (\ref{La5}) take the function $h\in\H_{\eps}$ such that
$h(x)\geq\mathbf{1}_{B_1(0)^c}(x)$ for any $x\in\R^d$. Then
\begin{align*}
\lim_{|g|\rightarrow
0}|g|^{-\al}\P(\{|S|>|g|^{-1}\})\leq\lim_{|g|\rightarrow
0}|g|^{-\al}\E h(gS)=\La(h)<\8.
\end{align*}
In a similar way we obtain
\begin{align*}
|g|^{-\al}\La(\{x\in\R^d:|x|>|g|^{-1}\})\leq|g|^{-\al}\int_{\R^d}h(gx)\La(dx)=\La(h)<\8,
\end{align*}
since $\La$ is $\al$ homogeneous.
\end{proof}
\begin{thm}\label{convthm}
For every $f\in\F$, under the assumptions of Remark \ref{rem}
\begin{align}
\label{conv1}\lim_{|g|\rightarrow 0}|g|^{-\al}\bf(g)=\lim_{|g|\rightarrow
0}U_{\al}(\chi_{f,\al})(g)=\frac{1}{m_{\al}}\int_{G}\chi_{f,\al}(g)\la(dg),
\end{align}
holds.
\end{thm}
\begin{proof} For the proof we refer to the \cite{BDGHU}.
\end{proof}
\subsection{Polar decomposition for the measure $\La$}
Being homogeneous $\La $ can be nicely expressed in polar
coordinates. More precisely, we have the following
% Now we want to describe more precisely measure $\La$.
%We will give description in the terms of polar coordinates on
%$\R^d\setminus\{0\}$. In this purpose let formulate following
\begin{thm}\label{polthm}
Under the assumptions of Remark \ref{rem} measure $\La$ can be expressed in the following form
\begin{align}\label{pollem1}
\int_{\R^d\setminus\{0\}}f(x)\La(dx)=\int_{0}^{\8}\int_{\Ss^{d-1}}f(rx)\si_{\La}(dx)\frac{dr}{r^{\al+1}},
\end{align}
where $\si_{\La}$ is a Radon measure on $\Ss^{d-1}$, and
\begin{align}\label{pollem2}
\si_{\La}(\Ss^{d-1})=\frac{1}{m_\al}\E\left(|\p(S)|^{\al}-|MS|^{\al}\right).
\end{align}
Furthermore,
\begin{align}\label{pollem3}
\lim_{t\rightarrow\8}t^{\al}\P(\{|S|>t\})=\frac{1}{\al}\si_{\La}(\Ss^{d-1})=\frac{1}{\al
m_\al}\E\left(|\p(S)|^{\al}-|MS|^{\al}\right).
\end{align}
\end{thm}
\begin{proof}
Let $\Phi: \R^d\setminus\{0\}\mapsto(0, \8)\times\Ss^{d-1}$ be defined as follows
$\Phi(x)=\left(|x|, \frac{x}{|x|}\right)$ and its inverse $\Phi^{-1}:(0,
\8)\times\Ss^{d-1}\mapsto\R^d\setminus\{0\}$ by $\Phi^{-1}(r, z)=rz$. Notice that
\begin{align*}
\int_{\R^d\setminus\{0\}}f(x)\La^s(dx)=\int_{(0, \8)\times\Ss^{d-1}}f\left(\Phi^{-1}(r,
z)\right)\left(\La^s\circ\Phi^{-1}\right)(dr, dz),
\end{align*}
For $s<\al $ we define the measures $\si^s$ on $\Ss^{d-1}$
\begin{align*}
\si^s(F)=s\La^s\left(\Phi^{-1}\left[[1, \8)\times F\right]\right),
\end{align*}
where $F\in\B or(\Ss^{d-1})$. Now we express the measure
$\La^s\circ\Phi^{-1}$ in the terms of polar coordinates i.e.
\begin{align}\label{pollem4}
\int_{\R^d\setminus\{0\}}f(x)\La^s(dx)=\int_{0}^{\8}\int_{\Ss^{d-1}}f(rx)\si^s(dx)\frac{dr}{r^{s+1}},
\end{align}
Fix $0<\be<\ga$ and notice that for any $[\al, \be)\times F\in\B or((0,\8))\otimes\B
or(\Ss^{d-1})$,
\begin{align*}
\left(\La^s\circ\Phi^{-1}\right)([\be, \ga)\times F)
&=\La^s\left(\Phi^{-1}[[\be, \8)\times F]\right)-\La^s\left(\Phi^{-1}[[\ga, \8)\times F]\right)\\
&=\La^s\left(\be\Phi^{-1}[[1, \8)\times F]\right)-\La^s\left(\ga\Phi^{-1}[[1, \8)\times F]\right)\\
&=\frac{1}{s\be^s}s\La^s\left(\Phi^{-1}[[1, \8)\times F]\right)-\frac{1}{s\ga^s}s\La^s\left(\Phi^{-1}[[1, \8)\times F]\right)\\
&=\si^s(F)\left(\frac{1}{s\be^s}-\frac{1}{s\ga^s}\right)=\si^s(F)\int_{\be}^{\ga}\frac{dr}{r^{s+1}}.
\end{align*}
The above calculation proves (\ref{pollem4}).  Now we compute
$\si^{s}(\Ss^{d-1})$
\begin{align*}
\si^s(\Ss^{d-1})&=s\La^s\left(\Phi^{-1}\left[[1, \8)\times
\Ss^{d-1}\right]\right)=s\int_{\R^d\setminus\{0\}}\mathbf{1}_{\Phi^{-1}\left[[1, \8)\times
\Ss^{d-1}\right]}(x)\La^s(dx)\\
&=\frac{s}{m_\al}\int_{\R^d\setminus\{0\}}\int_{G}\mathbf{1}_{\Phi^{-1}\left[[1, \8)\times
\Ss^{d-1}\right]}\left(\frac{gx}{|x|}\right)|x|^s|g|^{-s}\la(dg)(\nu-\bm\ast\nu)(dx)\\
&=\frac{s}{m_\al}\int_{\R^d\setminus\{0\}}\int_{G}\mathbf{1}_{[1, \8)\times
\Ss^{d-1}}\left(|g|, \frac{gx}{|gx|}\right)|x|^s|g|^{-s}\la(dg)(\nu-\bm\ast\nu)(dx)\\
&=\frac{1}{m_\al}\int_{\R^d\setminus\{0\}}|x|^s(\nu-\bm\ast\nu)(dx)=\frac{1}{m_\al}\E\left(|\p(S)|^s-|MS|^s\right).
\end{align*}
Hence (\ref{pollem1}) and (\ref{pollem2}) hold. Furthermore,
\begin{align*}
\lim_{t\rightarrow\8}t^{\al}\P(\{|S|>t\})=\lim_{t\rightarrow\8}t^{\al}\int_{t}^{\8}\frac{dr}{r^{\al+1}}\si_{\La}(\Ss^{d-1})=\frac{1}{\al
m_\al}\E\left(|\p(S)|^{\al}-|MS|^{\al}\right),
\end{align*}
(\ref{pollem3}) holds and the proof is finished.
\end{proof}
\subsection{Nontriviality of the tail measure}
If $\supp \nu$ is bounded, then $\P(\{|S|>t\})=0$ for $t$ large
enough and so $\La$ is trivial. If $\nu $ has an unbounded
support, it is natural to ask whether $\La $ is not zero. It is so
under some extra conditions.
% But in this section we are interested
%in when is the limit of expression $t^{\al}\P(\{|S|>t\})$
%nontrivial as $t$ goes to infinity? Since, now we assume that
%support of measure $\nu$ is unbounded.
\begin{thm}\label{thmsupp}
Assume that $\al<s_{\8}$ and the hypothesis stated in Remark \ref{rem} are satisfied. Additionally
assume that $\E(|N|^s)<\8$ for every $s<s_{\8}$. If support of $\nu$ is unbounded and one of the
following condition is satisfied
\begin{align}
\label{consupp1}&s_{\8}<\8 \ \ \ \ \mbox{and}\ \ \ \ \ \lim_{s\rightarrow s_{\8}}\frac{\E(|N|^s)}{\ka(s)}=0,\\
\label{consupp2}&s_{\8}=\8 \ \ \ \ \mbox{and}\ \ \ \ \ \lim_{s\rightarrow
\8}\left(\frac{\E(|N|^s)}{\ka(s)}\right)^{\frac{1}{s}}=C<\8,
\end{align}
then the measure $\La$ is nontrivial.
\end{thm}
The proof goes along the same lines as in \cite{BDGHU} Proposition 3.12, but it is not so easy to
extract it from section 3 there containing a more general argument. Therefore, and to show how our
assumptions \ref{shapefun} and \ref{heavytail} do work , we include here the proof of Theorem
\ref{thmsupp}. Conditions \eqref{consupp1} and \eqref{consupp2} are very restrictive. For many
concrete stochastic recursion these conditions can be relaxed for details we refer \cite{BK, BDG,
G, Gui}.

 In order to prove that measure $\La$ is
nontrivial in view of Theorem \ref{polthm} we will show that
\begin{align*}
\si_{\La}(\Ss^{d-1})\not=0
%\ \Longleftrightarrow
%\E\left(|\p_1(S_2)|^{\al}-|M_1S_2|^{\al}\right)\not=0,
\end{align*}
Before proving the theorem we need some lemmas. In the proofs the
following inequalities will be used
% In the majority of these
%lemmas essential role will be play two very useful inequalities,
\begin{align}\label{n1}
|x-y|^r\leq C_r\left(|x|^r+|y|^r\right)\ \mbox{where $r>0$ and $x,y\in\R^d$},
\end{align}
and
\begin{align}\label{n2}
\left||x|^r-|y|^r\right|\leq\left\{ \begin{array}{ll}
                                            \ |x-y|^r,                                         & \mbox{if $0<r\leq1$,}\\
                                            r|x-y|\left(\max(|x|, |y|)\right)^{r-1},           & \mbox{if $r>1$,}\\
                                 \end{array} \right.
\end{align}
where $x,y\in\R^d$. %The following formula is crucial for
%what follows
%our deliberations.
Moreover, by  \eqref{statthm2} and \eqref{statthm3}, for every
$s<\al$,
\begin{align}\label{ntform1}
\E\left(|S_1|^s\right)=\frac{\E\left(|\p_1(S_2)|^s-|M_1S_2|^s\right)}{1-\ka(s)},
\end{align}
\begin{rem}\label{rem1} Notice that, in view of (H2), for $s\leq s_{\8}\leq1$,
\begin{align}\label{n3}
\left||\p_1(S_2)|^s-|M_1S_2|^s\right|\leq|\p_1(S_2)-M_1S_2|^s\leq |N_1|^s\ ,
\end{align}
and for $s>1$,
\begin{align}\label{n4}
||\p_1(S_2)|^s&-|M_1S_2|^s |\leq
s|\p_1(S_2)-M_1S_2|\max(|\p_1(S_2)|^{s-1}, |M_1S_2|^{s-1})\\
\nonumber&\leq s|N_1|\max\left(|\p_1(S_2)|^{s-1}, (|\p_1(S_2)-M_1S_2|+|\p_1(S_2)|)^{s-1}\right)\\
\nonumber&\leq s|N_1|(|N_1|+|S_1|)^{s-1},
\end{align}
%Above we have obtained after straightforward application of inequalities (\ref{n1}), (\ref{n2}) and
%a cancellation condition (H2).
\end{rem}
For reader's convenience we formulate the following theorem due to
Landau that will be used in the proof of the next lemma.
\begin{thm}\label{landau}
Let $\ga$ be a positive measure on $\R_{+}^{\ast}$ and let
$\widehat{\ga}(s)=\int_{\R_{+}^{\ast}}x^s\ga(dx)$ be its Mellin
transform which is well defined for $0<s<\te_{\8}$. $\te_{\8}$ is
called the abcissa of convergence of $\widehat{\ga}$. Then
$\widehat{\ga}$ extends holomorphically to
$\Rr(\te_{\8})=\{z\in\C: \Re z<\te_{\8}\}$ and cannot be extended
holomorphically to a neighborhood of $\te_{\8}$.
\end{thm}
Let $\Rr(s)=\{z\in\C: \Re z<s\}$ for $s<s_{\8}$.
\begin{lem}\label{lm1}
If $\si_{\La}(\Ss^{d-1})=0$, then $\E(|S|^s)<\8$ for $s<s_{\8}$,
where $S$ is the stationary solution of recursion (\ref{rec}).
\end{lem}
\begin{proof}
We will show that function
$s\mapsto\E\left(|\p_1(S_2)|^s-|M_1S_2|^s\right)$ has a
holomorphic extension to the set $\Rr(\al+\eps)$ where
$\al+\eps<s_{\8}$ and $\eps>0$. It suffices to show that the
function $s\mapsto\E\left(|\p_1(S_2)|^s-|M_1S_2|^s\right)$ is well
defined for $s<\al+\eps$, where $\eps>0$ we will choose later. If
$s\leq s_{\8}\leq1$, then by (\ref{n3})
\begin{align*}
\E \left||\p_1(S_2)|^s-|M_1S_2|^s\right|\leq \E|N_1|^s<\8.
\end{align*}
If $s>1$, then by (\ref{n4})
\begin{align*}
\E\left||\p_1(S_2)|^s-|M_1S_2|^s \right|&\leq
s2^{s-1}\E\left(|N_1|^s+|N_1||S_1|^{s-1}\right)\\
&\leq s2^{s-1}\left(\E\left(|N_1|^s\right)+\E(|N_1|)\E(|S_1|^{s-1})\right)<\8.
\end{align*}
By assumption, $\E|N_1|^s<\8$ for $s<s_{\8}$ and $\E|S_1|^s<\8$
for $s<\al$, hence $\E|S_1|^{s-1}<\8$ for $s<\al+\eps$ where
$0<\eps=\frac{s_{\8-\al}}{s_{\8}}<1$. Therefore, we can extend the
function $s\mapsto\E\left(|\p_1(S_2)|^s-|M_1S_2|^s\right)$
holomorphically to the set $\Rr(\al+\eps)$. Now we will show that
also the function $s\mapsto\E(|S|^s)$ has a holomorphic extension
to the set $\Rr(\al+\eta)$ for some $\eta>0$. Indeed, let
$\La(z)=\E\left(|\p_1(S_2)|^z-|M_1S_2|^z\right)$. By above
$\La(z)$ is holomorphic for $z\in\Rr(\al+\eps)$. Since $\ka(z)-1$
has simple zero at $z=\al$, $\La(\al)=0$ and
$\La(s)=(1-\ka(s))\E\left(|S|^s\right)$ for any $s<\al$, hence
function $h(z)=\frac{\La(z)}{1-\ka(z)}$ defines a holomorphic
extension of $\E(|S|^s)$ on some ball $B_{\eta}(\al)$ with center
$\al$ and radius $\eta>0$. Since $\ga(s)=\int|x|^s\nu(dx)$ is
Mellin transform of some positive measure, then Landau theorem
\ref{landau} ensures us that $\ga(s)$ does not extend beyond to
abscissia of convergence. But $\E(|S|^s)$ extends holomorphically
to $B_{\eta}(\al)$, so an abscissia of convergence has to be
greater than $\al+\eta$.

Now we are ready show that $\E(|S|^s)<\8$ for $s<s_{\8}$. Let
$s_0=\sup\{s<s_{\8}: \E(|S|^s)<\8\}$. Suppose for a contradiction
that $s_0<s_{\8}$. If $s_1=s_0\leq1$ notice, that
\begin{align*}
\E\left||\p_1(S_2)|^{s_1}-|M_1S_2|^{s_1} \right|\leq \E|N_1|^{s_1}<\8.
\end{align*}
If $s_0>1$, we take $s_1<s_{\8}$ such that $0<s_1-1<s_0<s_1$, so
$\E(|S|^{s_1-1})<\8$. By (\ref{n4}) we have
\begin{align*}
\E\left||\p_1(S_2)|^{s_1}-|M_1S_2|^{s_1} \right|<\8.
\end{align*}
It means that in both cases $\La(s_1)$ is well defined, hence it has a holomorphic extension. Now
using Landau theorem we argue in a similar way as above. Finally we obtain that $\E(|S|^{s_1})<\8$
which contradicts with the definition of $s_0$ and the lemma follows.
\end{proof}
\begin{lem}\label{lm2}
There exist $\xi>0$ such that $\ka(s)\geq C_{\xi}(1+\xi)^s$ for any $s<s_{\8}$. If $s_{\8}=\8$,
then for sufficiently large $s>0$
\begin{align*}
\frac{2s(1+\xi)^{s-1}\xi}{\ka(s)-1}\leq\frac{1}{2}.
\end{align*}
\end{lem}
\begin{proof}
In view of (H5) $\P(\{|M|\in(0, 1]\})<1$.
%Suppose for a
%contradiction that $\P(\{|M|\in(0, 1]\})=1$, it implies that
%$\log|M|<0$ $\P$ a.s. and it is contradiction with (H6). By the
%above argument $\P(\{|M|\in(1, \8)\})>0$. Since
%\begin{align*}
%\P(\{|M|\in[1+\xi, \8)\})_{\overrightarrow{\xi\rightarrow0}}\P(\{|M|\in(1, \8)\})>0,
%\end{align*}
Hence there is $\xi_0>0$ such that $C_{\xi}=\P(\{|M|\in[1+\xi, \8)\})>0$ for any $\xi\leq\xi_0$ and
so
\begin{align*}
\ka(s)=\E(|M|^{s})\geq\int_{\{|M|\in[1+\xi , \8)\}}|M|^{s}\P(d\omega)\geq C_{\xi}(1+\xi )^s.
\end{align*}
Taking $\xi \leq \xi _0$ we get the Lemma.

\end{proof}
\begin{proof}[Proof of Theorem (\ref{thmsupp})] In order to prove the theorem suppose for a contradiction that $\si_{\La}(\Ss^{d-1})=0$. We are going to show
that $|S|_{\8}<\8$. With out loss of generality we can assume that $|N_1|$ is not identically equal
$0$. Hence $\limsup_{s\rightarrow s_{\8}}\E(|N_1|^s)>0$. At first assume $0<s<s_{\8}\leq1$, then by
Lemma \ref{lm1}
\begin{align*}
\E\left(|S_1|^s\right)=\frac{\E\left(|\p_1(S_2)|^s-|M_1S_2|^s\right)}{1-\ka(s)},
\end{align*}
for $s<s_{\8}$. By above and (\ref{consupp1}),
\begin{align*}
\E\left(|S_1|^s\right)\leq \frac{\E(|N_1|^s)}{\ka(s)-1}\ \mbox{for $s>\al$,}
\end{align*}
Since $\limsup_{s\rightarrow s_{\8}}\E(|N_1|^s)>0$, condition (\ref{consupp1}) ensures us that
$\lim_{s\rightarrow s_{\8}}\ka(s)=0$ and it implies that $\lim_{s\rightarrow s_{\8}}\E(|S|^s)=0$.
For $s>1$ by (\ref{n4}) we obtain
\begin{align*}
\E\left(\left||\p_1(S_2)|^s-|M_1S_2|^s \right|\mathbf{1}_{\left\{|N_1|\leq\xi|S_1|
\right\}}\right)&\leq s\E\left(\left| |N_1|(|N_1|+|S_1|)^{s-1}
\right|\mathbf{1}_{\left\{|N_1|\leq\xi|S_1| \right\}}\right)\\
&\leq s\xi\E\left(|S_1|\left((\xi+1)|S_1|\right)^{s-1}\right)\\
&\leq s\xi(\xi+1)^{s-1}\E(|S_1|^s),
\end{align*}
\begin{align*}
\E\left(\left| |\p_1(S_2)|^s-|M_1S_2|^s \right|\mathbf{1}_{\left\{|S_1|\leq\frac{1}{\xi}|N_1|
\right\}}\right)&\leq s\E\left(\left||N_1|(|N_1|+|S_1|)^{s-1}
\right|\mathbf{1}_{\left\{|S_1|\leq\frac{1}{\xi}|N_1|\right\}}\right)\\
&\leq s\E\left(|N_1|\left(|N_1|+\frac{1}{\xi}|N_1|\right)^{s-1}
\right)\\
&\leq s\left(1+\frac{1}{\xi}\right)^{s-1}\E(|N_1|^s),
\end{align*}
hence, combining two above inequalities we obtain
\begin{align*}
\E(|S_1|^s)&\leq\E\left(\left| |\p_1(S_2)|^s-|M_1S_2|^s \right|\mathbf{1}_{\left\{|N_1|\leq\xi|S_1|
\right\}}\right)\\
&+\E\left(\left| |\p_1(S_2)|^s-|M_1S_2|^s
\right|\mathbf{1}_{\left\{|S_1|\leq\frac{1}{\xi}|N_1| \right\}}\right)\\
&\leq
\frac{s\xi(\xi+1)^{s-1}}{\ka(s)-1}\E(|S_1|^s)+\frac{s\left(1+\frac{1}{\xi}\right)^{s-1}}{\ka(s)-1}\E(|N_1|^s).
\end{align*}
Now we consider two cases
\begin{itemize}
\item $1<s_{\8}<\8$, by (\ref{consupp1}) $\lim_{s\rightarrow
s_{\8}}\ka(s)=\8$,
then for sufficiently large $s>\al$
\end{itemize}
\begin{align*}
\frac{2s(1+\xi)^{s-1}\xi}{\ka(s)-1}\leq\frac{1}{2}.
\end{align*}
Hence,
\begin{align*}
\E(|S_1|^s)\leq\frac{2s\left(1+\frac{1}{\xi}\right)^{s-1}}{\ka(s)-1}\E(|N_1|^s),
\end{align*}
so $\lim_{s\rightarrow s_{\8}}\E|S_1|^s=0$.
\begin{itemize}
\item $s_{\8}=\8$, then by condition (\ref{consupp2}) and Lemma \ref{lm2} for sufficiently large $s>\al$
\end{itemize}
\begin{align*}
\frac{2s(1+\xi)^{s-1}\xi}{\ka(s)-1}\leq\frac{1}{2},
\end{align*}
hence,
\begin{align*}
\E(|S_1|^s)&\leq\frac{2s\left(1+\frac{1}{\xi}\right)^{s-1}}{\ka(s)-1}\E(|N_1|^s)
\leq\frac{2s\left(1+\frac{1}{\xi}\right)^{s-1}}{\ka(s)-1}C^s\ka(s),
\end{align*}
then
\begin{align*}
\E(|S_1|^s)^{\frac{1}{s}}&\leq
C(2s)^{\frac{1}{s}}\left(1+\frac{1}{\xi}\right)^{\frac{s-1}{s}}\left(\frac{\ka(s)}{\ka(s)-1}\right)^{\frac{1}{s}},
\end{align*}
and finally
\begin{align*}
|S_1|_{\8}=\lim_{s\rightarrow\8}\E(|S_1|^s)^{\frac{1}{s}}\leq C\left(1+\frac{1}{\xi}\right)<\8.
\end{align*}
$|S|_{\8}<\8$ means that $S$ is bounded which is equivalent with
the fact that support of measure $\nu$ is bounded. This
contradicts our hypothesis, hence it proves that
$\E\left||\p_1(S_2)|^{\al}-|M_1S_2|^{\al} \right|\not=0$.
\end{proof}
%%%%%%%%%%%%%%%%%%%%%%%%%%%%%%%%%%%%%%%%%%%%%%%%%%%%%%%%%%%%%%%%%%%%%%%%%%%%%%%%%%%%%%%%%%%%%%%%%%%%%%%%%%%%%%%%%%%%%%%%%%%%%%%%%%%%%%%%%%%%%%%%%%%%%%%%%%%%%%%%%%%%%%%%%%%%%%%%%%%%%%%%%%%%%%%%%%%
\section{Fourier operators and their properties}
This section is devoted to study operators $P$ and their perturbations $P_{t,v}$. Properties
(L1)--(L3) allows us to proceed along the same lines as in \cite{BDG} with one major
difference--operators $T_{t,v}$. Auxiliary operators $T_{t,v}$ are used in \cite{BDG} to obtain an
explicit expression for the peripherical eigenfunctions corresponding to the eigenvalues $k_v(t)$
and they are written there by the formula that does not work beyond the affine recursion. Let
$\de_t$ be the dilatation acting on functions as follows $(f\circ\de_t)(x)=f(tx)$. Here we prove
that
\begin{equation}\label{dilat}
T_{t, v}f=P_{t, v}(f\circ\de_t)\circ \de _{t^{-1}},
\end{equation}
do the same job making the method applicable to a much more general context (see Lemma
\ref{EigenvalueT}.

We start by introducing two Banach spaces $\ct(\R^d)$ and
$\bl1(\R^d)$ of continuous functions
%as a subspaces of $\Cc(\R^d)$ as in
\cite{LP} (see also \cite{HH1} and \cite{HH2}).
\begin{align*}
\ct=\ct(\R^d)&=\left\{f\in\Cc(\R^d): |f|_{\rho}=\sup_{x\in\R^d}\frac{|f(x)|}{(1+|x|)^{\rho}}<\8\right\},\\
\bl1=\bl1(\R^d)&=\{f\in\Cc(\R^d): \|f\|_{\rel}=|f|_{\rho}+[f]_{\el}<\8\},
\end{align*}
where
\begin{align*}
[f]_{\el}=\sup_{x\not=
y}\frac{|f(x)-f(y)|}{|x-y|^{\ep}(1+|x|)^{\la}(1+|y|)^{\la}}.
\end{align*}
%is the seminorm.
\begin{rem}\label{Arcela}
If $\ep+\la<\rho$, then $[f]_{\el}<\8$ implies $|f|_{\rho}<\8$. As a simple application of
Arzel\`{a} -- Ascoli theorem %and simple properties of above norms
we obtain that the injection
operator $\bl1\hookrightarrow\ct$ is compact.
\end{rem}
From now on we assume that $\p_{\te}$ satisfies \ref{shapefun}, \ref{heavytail} and \ref{limitass}
for every $\te\in\Te$. On $\ct$ and $\bl1$ we consider the transition operator
\begin{align*}
Pf(x)=\E\left(f(\p(x))\right)=\int_{\Te}f(\p_{\te}(x))\mu(d\te),
\end{align*}
and %their Fourier operator sometimes also called
its perturbations
\begin{align*}
P_{t,v}f(x)=\E\left(e^{i\langle tv,\p(x)\rangle}f(\p(x))\right)=\int_{\Te}e^{i\langle
tv,\p_{\te}(x)\rangle}f(\p_{\te}(x))\mu(d\te),
\end{align*}
where $x\in\R^d$, $v\in\Ss^{d-1}$ and $t\in[0, 1]$.  We will are
also use the following Fourier operators
\begin{align*}
T_{t,v}f(x)=\E\left(e^{i\langle v,\p_{t}(x)\rangle}f(\p_{t}(x))\right)=\int_{\Te}e^{i\langle
v,\p_{\te, t}(x)\rangle}f(\p_{\te,t}(x))\mu(d\te),
\end{align*}
and
\begin{align*}
T_{v}f(x)=\E\left(e^{i\left\langle v,
\bp(x)\right\rangle}f\left(\bp(x)\right)\right)=\int_{\Te}e^{i\left\langle v,
\bp_{\te}(x)\right\rangle}f\left(\bp_{\te}(x)\right)\mu(d\te),
\end{align*}
for $x\in\R^d$, where $t\in[0, 1]$ and $v\in\Ss^{d-1}$. The above
operators will %play an important role. They
allow us to study the expansion of $k_v(t)$ at $0$. Later on we will show connection between
operators $P_{t,v}$ and $T_{t,v}$. In particular we are going to show that an appropriate dilation
of the projections of the eigenfunction $h_v$ of $T_v$ with the eigenvalue $1$ approximates well
peripherical eigenvectors of $P_{t,v}$.
%asymptotic behavior of the
%Birkhoff sums $S_n^x=\sum_{k=0}^{n}X_{k}^{x}$, where
%$(X_{n}^{x})_{n\geq0}$ are as in (\ref{rec}).
Later on we will show connection between operators $P_{t,v}$ and $T_{t,v}$. To treat both $P_{t,v}$
and $T_{t,v}$ in a unified way we write
\begin{align*}
\F_{s,t,v}f(x)=\E\left(e^{i\langle
sv,\p_{t}(x)\rangle}f(\p_{t}(x))\right)=\int_{\Te}e^{i\langle sv,
\p_{\te, t}(x)\rangle}f(\p_{\te, t}(x))\mu(d\te).
\end{align*}
Notice that
\begin{align*}
\F_{s,0,v}f(x)=\E\left(e^{i\left\langle sv,
\bp(x)\right\rangle}f\left(\bp(x)\right)\right)=\int_{\Te}e^{i\left\langle sv,
\bp_{\te}(x)\right\rangle}f\left(\bp_{\te}(x)\right)\mu(d\te),
\end{align*}
and
\begin{align*}
\F_{0,t,v}f(x)=\E\left(f(\p_{t}(x))\right)=\int_{\Te}f(\p_{\te,t}(x))\mu(d\te),
\end{align*}
for $x\in\R^d$, where $s, t\in[0, 1]$ and $v\in\Ss^{d-1}$. Observe
that, $\F_{s,1,v}=P_{s,v}$ and $\F_{1,t, v}=T_{t,v}$.
%This
%observation is very useful. It allows us to avoid formulate a
%number of lemmas individually for operators $P_{t,v}$ and
%$T_{t,v}$.

Now we show the connection between operators $P_{t,v}$ and
$T_{t,v}$ in the lemma below. %is very important to estimate
%appropriate fractional Taylor expansions.

\begin{lem}\label{connectionPT}
If $f\in\ct$, then for every $n\in\N$, $x\in\R^d$ and $t\in[0, 1]$
\begin{align}\label{PT1}
P_{t, v}^{n}(f\circ\de_t)(x)=T_{t, v}^{n}f(tx).
\end{align}
Moreover if $f\in\ct$ is eigenfunction of operator $T_{t, v}$ with eigenvalue $k_v(t)$, then
$f\circ\de_t$ is eigenfunction of operator $P_{t, v}$ with the same eigenvalue.
\end{lem}
\begin{proof}
For $n=1$ formula (\ref{PT1}) is obvious. Then we proceed by
induction. % Generally for $n\in\N$ we obtain by induction, that
\begin{align*}
P_{t, v}^{n+1}(f\circ\de_t)(x)&=\int_{\Te}e^{i\langle tv,\p_{\te}(x)\rangle}P_{t,
v}^{n}(f\circ\de_t)(\p_{\te}(x))\mu(d\te)\\
&=\int_{\Te}e^{i\left\langle
v,t\p_{\te}\left(t^{-1}tx\right)\right\rangle}T_{t, v}^nf\left(t\p_{\te}\left(t^{-1}tx\right)\right)\mu(d\te)\\
&=\int_{\Te}e^{i\langle v,\p_{\te, t}(tx)\rangle}T_{t, v}^nf(\p_{\te, t}(tx))\mu(d\te)=T_{t,
v}^{n+1}f(tx).
\end{align*}
If $T_{t, v}f(x)=k_v(t)f(x)$, then
\begin{align*}
P_{t, v}(f\circ\de_t)(x)=T_{t, v}f(tx)=k_v(t)f(tx)=k_v(t)(f\circ\de_t)(x).
\end{align*}
\end{proof}

\begin{prop}\label{KL}
Assume that $0<\ep<1$, $\la>0$, $\la+2\ep<\rho=2\la$ and $2\la+\ep<\al$, then there exist
$0<\varrho<1$, $\de>0$ and $t_0>0$ such that $\varrho<1-\de$ and for every $t\in[0, t_0]$ and every
$v\in\Ss^{d-1}$
\begin{itemize}
\item $\sigma(P_{t, v})$ and $\sigma(T_{t, v})$ are contained in $\D=\{z\in\C: |z|\leq\varrho\}\cup\{z\in\C: |z-1|\leq\de\}$.
\item The sets $\sigma(P_{t, v})\cap\{z\in\C: |z-1|\leq\de\}$ and $\sigma(T_{t, v})\cap\{z\in\C:
|z-1|\leq\de\}$ consist of exactly one eigenvalue $k_v(t)$, the corresponding eigenspace is one
dimensional and $\lim_{t\rightarrow0}k_v(t)=1$.
\item For any $z\in\D^c$ and every $f\in\bl1$
\begin{align*}
\left\|(z-P_{t, v})^{-1}f\right\|_{\rel}\leq D\|f\|_{\rel},\\
\left\|(z-T_{t, v})^{-1}f\right\|_{\rel}\leq D\|f\|_{\rel},
\end{align*}
where $D>0$ is universal constant which does not depend on $t\in[0, t_0]$.
\item Moreover, we can express operators $P_{t,v}$ and $T_{t,v}$ in the
following form
\begin{align*}
P_{t,v}^n=k_v(t)^n\Pi_{P, t}+Q_{P, t}^n,\\
T_{t,v}^n=k_v(t)^n\Pi_{T, t}+Q_{T, t}^n,
\end{align*}
for every $n\in\N$. Where $\Pi_{P, t}$ and $\Pi_{T, t}$ are
projections onto mentioned above one dimensional eigenspaces.
$Q_{P, t}$ and $Q_{T, t}$ are complemented operators to
projections $\Pi_{P, t}$ and $\Pi_{T, t}$ respectively, such that
$\Pi_{P, t}Q_{P, t}=Q_{P, t}\Pi_{P, t}=0$ and $\Pi_{T, t}Q_{T,
t}=Q_{T, t}\Pi_{T, t}=0$, furthermore $\|Q_{P,
t}\|_{\bl1}\leq\varrho$ and $\|Q_{T, t}\|_{\bl1}\leq\varrho$.
\item The above operators can be expressed in the
terms of the resolvents of $P_{t, v}$ and $T_{t, v}$. Indeed, for
appropriately chosen parameters $\xi_1>0$ and $\xi_2>0$
\begin{align*}
k(t)\Pi_{F, t}&=\frac{1}{2\pi i}\int_{|z-1|=\xi_1}z(z-F_{t, v})^{-1}dz,\\
\Pi_{F, t}&=\frac{1}{2\pi i}\int_{|z-1|=\xi_1}(z-F_{t, v})^{-1}dz,\\
Q_{F, t}&=\frac{1}{2\pi i}\int_{|z|=\xi_2}z(z-F_{t, v})^{-1}dz,
\end{align*}
where $F=P$ or $F=T$.
\end{itemize}
\end{prop}
Proposition \ref{KL} is a consequence of the perturbation theorem
of Keller and Liverani \cite{KL}. Before we apply their theorem we
will check in a number of Lemmas that its assumptions are
satisfied.
% hypothesis. We will check these hypothesis in a number
%of lemmas. A lot of theme will be common for operators $P_{t,v}$
%and $T_{t,v}$.
\begin{lem}\label{KL0}
For every $n\in\N$
\begin{align}\label{per}
\F_{s,t,v}^{n}f(x)=\E\left(e^{i\left\langle sv,
S_{n,t}^x\right\rangle}f\left(X_{n,t}^{x}\right)\right).
\end{align}
\end{lem}
\begin{proof} For $n=1$ \eqref{per} coincides with the definition of
$\F_{s,t,v}$. Assume that the above holds for some $n\in\N$. Let $\p_t(x)$ be independent of
$S_{n,t}^x$, then
\begin{align*}
\F_{s,t,v}^{n+1}f(x)&=\E\left(e^{i\langle sv, \p_t(x)\rangle}\F_{s,t,v}^{n}f(\p_t(x))\right)\\
&= \E\left(e^{i\langle sv, \p_t(x)\rangle}e^{i\left\langle sv,
S_{n,t}^{\p_t(x)}\right\rangle}f\left(X_{n,t}^{\p_t(x)}\right)\right)=\E\left(e^{i\left\langle sv,
S_{n+1,t}^x\right\rangle}f\left(X_{n+1,t}^{x}\right)\right).
\end{align*}
\end{proof}
We need also following inequalities
\begin{lem}\label{ineqexp}
For every $x, y\in\R^d$ and $0<\ep\leq1$,
\begin{align}\label{ineqexp1}
\left|e^{i\langle x, y\rangle}-1\right|\leq2|x|^{\ep}|y|^{\ep},
\end{align}
and more generally for $n\in\N$,
\begin{align}\label{ineqexp2}
\left|e^{i\langle x, y\rangle}-\sum_{k=0}^{n-1}\frac{\left(i\langle x,
y\rangle\right)^k}{k!}\right|\leq2|x|^{\ep+n-1}|y|^{\ep+n-1}.
\end{align}
\end{lem}
 Let denote $\Pi_n=L_{\te_1}\cdot\ldots\cdot L_{\te_n}$ for
$n\in\N$ and $\Pi_0=1$.
\begin{lem}\label{KL1}
Assume that $0<\rho<\al$. Then there exists a constant $C_1>0$
independent of $s, t\in[0, 1]$ and $v\in\Ss^{d-1}$ such that for
every $n\in\N$
\begin{align}\label{KL1A}
\left|\F_{s,t,v}^{n}f\right|_{\rho}\leq C_1|f|_{\rho}.
\end{align}
\end{lem}
\begin{proof} By the (\ref{series2}) we have $\left|X_{n,t}^{x}-X_{n,t}^{y}\right|\leq L_{\te_1}\cdot\ldots\cdot
L_{\te_n}|x-y|$. Since $X_{n,t}^{0}=tX_{n}^{0},$ we have
\begin{align}\label{KL1a}
\left|X_{n,t}^{x}\right|\leq |t|\cdot\left|X_{n}^{0}\right|+ L_{\te_1}\cdot\ldots\cdot
L_{\te_n}|x|,
\end{align}
then by Lemma \ref{KL0}, definition of $| \cdot |_{\rho}$ and (\ref{KL1a}) we have
\begin{align*}
\frac{\left|\F_{s,t,v}^{n}f(x)\right|}{(1+|x|)^{\rho}}&\leq\E\left(\frac{\left|f\left(X_{n,t}^{x}\right)\right|}{\left(1+\left|X_{n,t}^{x}\right|\right)^{\rho}}\cdot\frac{\left(1+\left|X_{n,t}^{x}\right|\right)^{\rho}}{(1+|x|)^{\rho}}\right)\\
&\leq|f|_{\rho}\E\left(\frac{\left(1+|t|\left|X_{n}^{0}\right|+\Pi_n|x|\right)^{\rho}}{(1+|x|)^{\rho}}\right)\\
&\leq3^{\rho}\left(1+|t|^{\rho}\E\left(\left|X_{n}^{0}\right|^{\rho}\right)+\ka(\rho)^n\right)|f|_{\rho}\leq
C_1|f|_{\rho},
\end{align*}
and by Lemma \ref{bound3}
$C_1=3^{\rho}\sup_{n\in\N}\left(1+\E\left(\left|X_{n}^{0}\right|^{\rho}\right)+\ka(\rho)^n\right)$
is finite which gives (\ref{KL1A}).
\end{proof}

\begin{lem}\label{KL2}
Assume that $0<\ep<1$, $\la>0$, $2\la+\ep<\al$, and $\rho=2\la$.
Then there exist constants $C_2>0$, $C_3>0$ and $0<\varrho<1$
independent of $s, t\in[0, 1]$ and $v\in\Ss^{d-1}$ such that for
every $f\in\bl1$ and $n\in\N$
\begin{align}\label{KL2A}
\left[\F_{s,t,v}^{n}f\right]_{\el}\leq C_2\varrho^n[f]_{\el}+C_3|f|_{\rho}.
\end{align}
\end{lem}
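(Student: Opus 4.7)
The plan is to apply the identity $\F^n_{s,t,v}f(x)=\E\bigl[e^{i\langle sv,S^x_{n,t}\rangle}f(X^x_{n,t})\bigr]$ from Lemma \ref{KL0} and split
\[|\F^n_{s,t,v}f(x)-\F^n_{s,t,v}f(y)|\leq I(x,y)+II(x,y),\]
where $I(x,y):=\E|f(X^x_{n,t})-f(X^y_{n,t})|$ carries the variation of $f$ and $II(x,y):=\E\bigl[|f(X^y_{n,t})|\cdot|e^{i\langle sv,S^x_{n,t}\rangle}-e^{i\langle sv,S^y_{n,t}\rangle}|\bigr]$ carries the oscillation of the character. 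The goal is for $I$ to produce the contracting term $C_2\varrho^n[f]_\el$ and for $II$ to produce the bounded term $C_3|f|_\rho$.

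For $I$, I would bound $|f(X^x)-f(X^y)|$ using the seminorm $[f]_\el$, invoke Lemma \ref{series} to get $|X^x-X^y|\leq\Pi_n|x-y|$, and decouple the initial data from the randomness via the elementary inequality $(1+|X^z_{n,t}|)\leq(1+|X^0_{n,t}|)(1+\Pi_n)(1+|z|)$ for $z\in\{x,y\}$. This reduces matters to showing
\[\E\bigl[\Pi_n^\epsilon(1+\Pi_n)^{2\lambda}(1+|X^0_{n,t}|)^{2\lambda}\bigr]\leq C\varrho^n.\]
Expanding $(1+\Pi_n)^{2\lambda}\leq 2^{2\lambda}(1+\Pi_n^{2\lambda})$ and applying H\"older's inequality with exponents $p,q$ satisfying $1/p+1/q=1$, $(2\lambda+\epsilon)p<\alpha$ and $2\lambda q<\alpha$---possible precisely because $2\lambda+\epsilon<\alpha$---the Cram\'er condition (H5) yields $\E\Pi_n^\beta=\kappa(\beta)^n$ with $\kappa(\beta)<1$ for $\beta\in(0,\alpha)$, while Lemma \ref{bound3} gives uniformly bounded moments of $X^0_{n,t}$. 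Together these supply the required exponential decay.

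For $II$, I would use the refined pointwise bound $|f(x)|\leq|f|_\rho+[f]_\el|x|^\epsilon(1+|x|)^\lambda$ (coming from $|f(x)-f(0)|\leq[f]_\el|x|^\epsilon(1+|x|)^\lambda$), combined with $|e^{ia}-e^{ib}|\leq\min\bigl(2,\,2^{1-\epsilon}|a-b|^\epsilon\bigr)$, the telescoping estimate $|S^x-S^y|\leq|x-y|\sum_{k=1}^n\Pi_k$, and subadditivity of $r\mapsto r^\epsilon$. The $|f|_\rho$-part integrates to a term of order $|f|_\rho|x-y|^\epsilon$ (since $\sum_k\kappa(\epsilon)^k<\infty$), delivering the desired $C_3|f|_\rho$ contribution after dividing by $(1+|x|)^\lambda(1+|y|)^\lambda|x-y|^\epsilon$. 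The $[f]_\el$-part yields $[f]_\el|x-y|^\epsilon\,\E\bigl[(1+|X^y_{n,t}|)^{\lambda+\epsilon}\sum_k\Pi_k^\epsilon\bigr]$, which I would control by conditioning on $\theta_1,\dots,\theta_k$, invoking the Markov property together with Lemma \ref{bound3}, and applying H\"older with exponents $\epsilon p<\alpha$ and $(\lambda+\epsilon)q<\alpha$ (feasible since $\lambda+\epsilon<2\lambda<\alpha$), leading to a bound of the shape $C[f]_\el|x-y|^\epsilon(1+|y|)^{\lambda+\epsilon}$.

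The main technical obstacle is reconciling the asymmetric factor $(1+|y|)^{\lambda+\epsilon}$ arising in $II$ with the symmetric denominator $(1+|x|)^\lambda(1+|y|)^\lambda$ of $[\cdot]_\el$. I would resolve this by a dichotomy on $|x-y|$. If $|x-y|\leq 1$, then $(1+|x|)\asymp(1+|y|)$ and the hypothesis $\lambda+2\epsilon<\rho=2\lambda$ (which in particular forces $\epsilon<\lambda$) gives $(1+|y|)^{\lambda+\epsilon}\leq C(1+|x|)^\lambda(1+|y|)^\lambda$. If $|x-y|>1$, I would use $|e^{ia}-e^{ib}|\leq 2$ in place of the H\"older bound, and case-analyze according to whether $|y|\lesssim|x|$ or $|y|\gg|x|$; in the latter regime $|x-y|\asymp|y|$, so $|x-y|^\epsilon\asymp(1+|y|)^\epsilon$ absorbs the excess. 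Any residual $[f]_\el$-coefficient arising from $II$ that fails to decay with $n$ can then be folded into the contracting term by iterating the one-step inequality over blocks of length $n$ and choosing $n$ large enough so that the net coefficient of $[f]_\el$ becomes strictly less than one.
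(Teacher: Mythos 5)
Your decomposition into $I$ and $II$ is exactly the paper's, and your treatment of $I$ is essentially right in spirit, but the specific H\"older claim there is false: conjugate exponents $p,q$ with $(2\la+\ep)p<\al$ and $2\la q<\al$ exist iff $\frac{2\la+\ep}{\al}+\frac{2\la}{\al}<1$, i.e.\ iff $4\la+\ep<\al$, which is \emph{not} implied by $2\la+\ep<\al$. The multiplicative decoupling $(1+|X^z_{n,t}|)\le(1+|X^0_{n,t}|)(1+\Pi_n)(1+|z|)$ is what forces the bad cross term $\E\bigl(\Pi_n^{2\la+\ep}(1+|X^0_{n,t}|)^{2\la}\bigr)$. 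The paper instead keeps the bound additive, $1+|X^z_{n,t}|\le 1+|X^0_{n,t}|+\Pi_n|z|\le(1+|X^0_{n,t}|+\Pi_n)(1+|z|)$ followed by $(1+|X^0_{n,t}|+\Pi_n)^{2\la}\le 3^{2\la}(1+|X^0_{n,t}|^{2\la}+\Pi_n^{2\la})$, so the only mixed term is $\E\bigl(\Pi_n^{\ep}|X^0_{n,t}|^{2\la}\bigr)$, which is handled by H\"older with the matched conjugate pair $\frac{2\la+\ep}{\ep},\frac{2\la+\ep}{2\la}$ (total moment $2\la+\ep<\al$ on each factor). With that repair, $I$ gives $C_2\varrho^n[f]_\el$ with $\varrho=\max\{\ka(\ep),\ka(2\la+\ep),\ka(2\la+\ep)^{\ep/(2\la+\ep)}\}$.

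The real gap is in $II$. By using the refined bound $|f(x)|\le|f|_\rho+[f]_\el|x|^\ep(1+|x|)^\la$ you introduce a $[f]_\el$-contribution into $II$ whose coefficient involves $\E\bigl(B_n^{\ep}\cdots\bigr)$ with $B_n=\sum_{k\le n}\Pi_k$; this is bounded in $n$ but does not decay, so you end up with $[\F^n_{s,t,v}f]_\el\le C_2\varrho^n[f]_\el+C'[f]_\el+C_3|f|_\rho$. This is strictly weaker than \eqref{KL2A} and useless for Keller--Liverani unless $C'<1$, which you do not establish; your proposed repair of iterating over blocks of length $n$ cannot make a coefficient $C_2\varrho^n+C'$ drop below one if $C'\ge1$. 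Moreover your dichotomy invokes $\ep<\la$ (via $\la+2\ep<\rho$), which is not among the hypotheses of this lemma. All of this is avoidable: bound $|f(X^y_{n,t})|$ by $|f|_\rho(1+|X^y_{n,t}|)^\rho$ only, assume w.l.o.g.\ $|y|\le|x|$ so that $(1+|x|)^\la(1+|y|)^\la\ge(1+|y|)^{2\la}=(1+|y|)^{\rho}$, and use $(1+|X^y_{n,t}|)^{\rho}\le(1+|X^0_{n,t}|+\Pi_n)^{\rho}(1+|y|)^{\rho}$ so the $(1+|y|)^{\rho}$ factors cancel exactly (this is where $\rho=2\la$ is used). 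Then $II$ contributes only $C_3|f|_\rho$, with $C_3$ controlled by $\sup_n\E\bigl(B_n^{\ep}+B_n^{\ep}|X_n^0|^{\rho}+B_n^{\ep}\Pi_n^{\rho}\bigr)<\8$, again via H\"older with total exponent $\rho+\ep=2\la+\ep<\al$.
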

\begin{proof} By the definition of the seminorm $[\ \cdot\ ]_{\el}$ we have
\begin{align}
\label{KL2a}\F_{s,t,v}^{n}f(x)-\F_{s,t,v}^{n}f(y)&=\E\left(e^{i\left\langle sv,
S_{n,t}^x\right\rangle}\left(f\left(X_{n,t}^{x}\right)-f\left(X_{n,t}^{y}\right)\right)\right)\\
\label{KL2b}&+\E\left(\left(e^{i\left\langle sv,
S_{n,t}^x\right\rangle}-e^{i\left\langle sv,
S_{n,t}^y\right\rangle}\right)f\left(X_{n,t}^{y}\right)\right).
\end{align}
To obtain (\ref{KL2A}) we have to estimate (\ref{KL2a}) and
(\ref{KL2b}) separately. Indeed,
\begin{align}
\label{KL2c}&\frac{\left|\E\left(e^{i\left\langle sv,
S_{n,t}^x\right\rangle}\left(f\left(X_{n,t}^{x}\right)-f\left(X_{n,t}^{y}\right)\right)\right)\right|}{|x-y|^{\ep}(1+|x|)^{\la}(1+|y|)^{\la}}\\
\nonumber&\leq[f]_{\el}\cdot\E\left(\frac{\left|X_{n,t}^{x}-X_{n,t}^{y}\right|^{\ep}\left(1+\left|X_{n,t}^{x}\right|\right)^{\la}\left(1+\left|X_{n,t}^{y}\right|\right)^{\la}}{|x-y|^{\ep}(1+|x|)^{\la}(1+|y|)^{\la}}\right)\\
\nonumber&\leq[f]_{\el}\cdot\E\left(\frac{\Pi_n^{\ep}\left(1+\left|X_{n,t}^{0}\right|+\Pi_n|x|\right)^{\la}\left(1+\left|X_{n,t}^{0}\right|+\Pi_n|y|\right)^{\la}}{(1+|x|)^{\la}(1+|y|)^{\la}}\right)\\
\nonumber&\leq[f]_{\el}\cdot\E\left(\Pi_n^{\ep}\left(\Pi_n+\left|X_{n,t}^{0}\right|+1 \right)^{2\la}\right)\\
\nonumber&\leq3^{2\la}[f]_{\el}\cdot\left(\E\left(\Pi_n^{2\la+\ep}\right)+\E\left(\Pi_n^{\ep}\left|X_{n,t}^{0}\right|^{2\la}\right)+\E\left(\Pi_n^{\ep}\right)\right).
\end{align}
Now let $\varrho=\max\left\{\ka(\ep), \ka(2\la+\ep),
\ka^{\frac{\ep}{2\la+\ep}}(2\la+\ep)\right\}<1$. Applying the
H\"{o}lder inequality to the last expression, we obtain
\begin{align}
\label{KL2d}&3^{2\la}[f]_{\el}\cdot\left(\E\left(\Pi_n^{2\la+\ep}\right)+\E\left(\Pi_n^{\ep}\left|X_{n,t}^{0}\right|^{2\la}\right)+\E\left(\Pi_n^{\ep}\right)\right)\\
\nonumber\leq3^{2\la}[f]_{\el}\cdot&\left(\ka(2\la+\ep)^n+|t|^{2\la}\left(\ka^{\frac{\ep}{2\la+\ep}}(2\la+\ep)\right)^n\E\left(\left|X_{n}^{0}\right|^{2\la+\ep}\right)^{\frac{2\la}{2\la+\ep}}+\ka(\ep)^n\right)\\
\nonumber\leq3^{2\la}\varrho^n[f]_{\el}\cdot&\left(2+|t|^{2\la}\E\left(\left|X_{n}^{0}\right|^{2\la+\ep}\right)^{\frac{2\la}{2\la+\ep}}\right)\leq
C_2\varrho^n[f]_{\el},
\end{align}
where by Lemma \ref{bound3} constant
 $C_2=3^{2\la}\sup_{n\in\N}\left(2+\E\left(\left|X_{n}^{0}\right|^{2\la+\ep}\right)^{\frac{2\la}{2\la+\ep}}\right)$ is finite.

In order to estimate (\ref{KL2b}) notice that by (\ref{series2}) we have
\begin{align*}
\left|S_{n,t}^x-S_{n,t}^y\right|\leq\sum_{k=1}^{n}\left|X_{n,t}^{x}-X_{n,t}^{y}\right|\leq\sum_{k=1}^{n}\Pi_k|x-y|\leq
B_n|x-y|,
\end{align*}
where $B_n=\sum_{k=0}^{n}\Pi_k$. Assume that $|y|\leq|x|$, then
\begin{align}
\label{KL2e}&\frac{\left|\E\left(\left(e^{i\left\langle sv,
S_{n,t}^x\right\rangle}-e^{i\left\langle sv,
S_{n,t}^y\right\rangle}\right)f\left(X_{n,t}^{y}\right)\right)\right|}{|x-y|^{\ep}(1+|x|)^{\la}(1+|y|)^{\la}}\\
\nonumber&\leq|f|_{\rho}\cdot\E\left(\frac{\left|e^{i\left\langle sv,
S_{n,t}^x-S_{n,t}^y\right\rangle}-1\right|\left(1+\left|X_{n,t}^{y}\right|\right)^{\rho}}{|x-y|^{\ep}(1+|x|)^{\la}(1+|y|)^{\la}}\right)\\
\nonumber&\leq2|s|^{\ep}|f|_{\rho}\cdot\E\left(\frac{B_{n}^{\ep}\left(1+\left|X_{n,t}^{0}\right|+\Pi_n\right)^{\rho}(1+|y|)^{\rho}}{(1+|y|)^{2\la}}\right)\\
\nonumber&\leq2\cdot3^{\rho}|s|^{\ep}|f|_{\rho}\cdot\E\left(B_{n}^{\ep}+|t|^{\rho}B_{n}^{\ep}\left|X_{n}^{0}\right|^{\rho}+B_{n}^{\ep}\Pi_n^{\rho}\right)\leq
C_3|f|_{\rho},
\end{align}
where constant
$C_3=\sup_{n\in\N}2\cdot3^{\rho}\cdot\E\left(B_{n}^{\ep}+B_{n}^{\ep}\left|X_{n}^{0}\right|^{\rho}+B_{n}^{\ep}\Pi_n^{\rho}\right)$
is finite. Indeed, for every $0<\eta<\min\{\al, 1\}$ we have that $B_{n}^{\eta}\leq
1+\Pi_1^{\eta}+\ldots+\Pi_n^{\eta}$ and for every $0<\be<\al$,
$\E\left(\sum_{n=0}^{\8}\Pi_{n}^{\be}\right)=\frac{1}{1-\ka(\be)}$ is finite. Hence the H\"{o}lder
inequality and Lemma \ref{bound3} applied to
$\E\left(B_{n}^{\ep}+B_{n}^{\ep}\left|X_{n}^{0}\right|^{\rho}+B_{n}^{\ep}\Pi_n^{\rho}\right)$ gives
$C_3<\8$. Combining estimates (\ref{KL2c}) and (\ref{KL2d}) with (\ref{KL2e}) we obtain the
inequality in (\ref{KL2A}).
\end{proof}

\begin{lem}\label{KL3}
Assume that $0<\ep<1$, $\la>0$, $2\la+\ep<\al$, $\rho=2\la$ and $\la+2\ep<\rho$. Then there exist
finite constants $C_4>0$ and $C_5>0$ independent of $s, t\in[0, 1]$ and of $v\in\Ss^{d-1}$ such
that for every $f\in\bl1$
\begin{align}
\label{KL3A}\left|\left(\F_{s,t,v}-\F_{s,0,v}\right)f\right|_{\rho}\leq C_4|t|^{\ep}\|f\|_{\rel},\\
\label{KL3B}\left|\left(\F_{s,t,v}-\F_{0,t,v}\right)f\right|_{\rho}\leq C_5|s|^{\ep}\|f\|_{\rel}.
\end{align}
\end{lem}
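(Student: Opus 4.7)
The plan is to expand each difference using the definition of $\F_{s,t,v}$, decompose the integrand into two pieces (one controlled by $[f]_{\epsilon,\lambda}$ and one by $|f|_\rho$), and then invoke the shape hypotheses (L1)--(L2) to replace $|\bp(x)|$ and $|\p_t(x)|$ by quantities involving $|M|$, $|Q|$, and $|x|$, before applying the moment conditions (H5)--(H7) and (L3) to control the resulting expectations.

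For the first bound $|(\F_{s,t,v}-\F_{s,0,v})f|_\rho \le C_4 t^\epsilon \|f\|_{\rel}$, I would write
\[
e^{i\langle sv,\p_t(x)\rangle}f(\p_t(x)) - e^{i\langle sv,\bp(x)\rangle}f(\bp(x)) = A_1(x) + A_2(x),
\]
where $A_1 = e^{i\langle sv,\p_t(x)\rangle}(f(\p_t(x))-f(\bp(x)))$ and $A_2 = (e^{i\langle sv,\p_t(x)\rangle}-e^{i\langle sv,\bp(x)\rangle})f(\bp(x))$. For $A_1$ I bound $|f(\p_t(x))-f(\bp(x))| \le [f]_{\el}|\p_t(x)-\bp(x)|^\epsilon(1+|\p_t(x)|)^\lambda(1+|\bp(x)|)^\lambda$, insert $|\p_t(x)-\bp(x)|\le t|Q|$ from (L2), and use (L1) plus $\bp(0)=0$ to get $|\bp(x)|\le|M||x|$ and $|\p_t(x)|\le|M||x|+|Q|$. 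For $A_2$ I use $|e^{iu}-e^{iv}|\le 2|u-v|^\epsilon$, $|f(\bp(x))|\le|f|_\rho(1+|M||x|)^\rho$, and $s\le 1$. The elementary inequality $(1+|M||x|+|Q|)(1+|M||x|) \le (1+|x|)^2(1+|M|+|Q|)(1+|M|)$ factors out a clean $(1+|x|)^{2\lambda}=(1+|x|)^\rho$, and the remaining $\theta$-integral is finite because $2\lambda+\epsilon<\alpha$ makes $\E((1+|M|+|Q|)^{2\lambda+\epsilon})<\infty$ by (H5)--(H7) and (L3).

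The second bound $|(\F_{s,t,v}-\F_{0,t,v})f|_\rho \le C_5 s^\epsilon \|f\|_{\rel}$ is the delicate one: after $|e^{i\langle sv,\p_t(x)\rangle}-1|\le 2 s^\epsilon|\p_t(x)|^\epsilon$, a naive estimate $|f(\p_t(x))|\le|f|_\rho(1+|\p_t(x)|)^\rho$ produces $|\p_t(x)|^\epsilon(1+|\p_t(x)|)^\rho\le(1+|\p_t(x)|)^{\rho+\epsilon}$, whose expectation behaves like $(1+|x|)^{\rho+\epsilon}$ --- one power of $(1+|x|)^\epsilon$ too many. The trick is to write $f(\p_t(x))=f(0)+(f(\p_t(x))-f(0))$ and use $|f(0)|\le|f|_\rho$ on the first summand and $|f(\p_t(x))-f(0)|\le[f]_{\el}|\p_t(x)|^\epsilon(1+|\p_t(x)|)^\lambda$ on the second. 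This gives
\[
\E(|\p_t(x)|^\epsilon|f(\p_t(x))|) \le |f|_\rho \E(|\p_t(x)|^\epsilon) + [f]_{\el}\E\bigl((1+|\p_t(x)|)^{\lambda+2\epsilon}\bigr),
\]
after which $|\p_t(x)|\le|M||x|+|Q|$ and the same product inequality above yield $\le C(1+|x|)^\epsilon|f|_\rho + C(1+|x|)^{\lambda+2\epsilon}[f]_{\el}$. Here both exponents are $<\rho=2\lambda$ because $\lambda+2\epsilon<\rho$ (hence in particular $\epsilon<\rho$), so the sum is majorized by $(1+|x|)^\rho\|f\|_{\rel}$. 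Multiplying by $2s^\epsilon$ and dividing by $(1+|x|)^\rho$ gives $C_5$.

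The hard part is precisely this second bound: without the $f(0)$ trick, one cannot absorb the extra $|\p_t(x)|^\epsilon$ coming from the Fourier character into the polynomial growth allowed by $|\cdot|_\rho$, and the assumption $\lambda+2\epsilon<\rho$ in Proposition~\ref{KL} is used in an essential way. Everything else is routine: the moment conditions (H5), (H6), (H7), (L3) combine to give $\E((1+|M|+|Q|)^p)<\infty$ for every $p\le\alpha$, which is all that is required since all exponents appearing ($\epsilon$, $2\lambda+\epsilon$, $\lambda+2\epsilon$) are strictly less than $\alpha$ under the standing hypothesis $2\lambda+\epsilon<\alpha$.
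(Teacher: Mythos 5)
Your proposal is correct and follows essentially the same route as the paper: the same two-term decomposition with (L2) giving $|\p_t(x)-\bp(x)|\le t|Q|$ for \eqref{KL3A}, and the same $f(\p_t(x))=f(0)+(f(\p_t(x))-f(0))$ splitting for \eqref{KL3B}, with the hypothesis $\la+2\ep<\rho$ absorbing the extra power of $|\p_t(x)|^{\ep}$ exactly as in the paper's estimate \eqref{KL3e}. No gaps.
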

\begin{proof}
In order to prove (\ref{KL3A}) notice that
\begin{align}
\label{KL3a}\left(\F_{s,t,v}-\F_{s,0,v}\right)f(x)&=\E\left(e^{i\langle
sv,\p_{t}(x)\rangle}\left(f(\p_{t}(x))-f\left(\bp(x)\right)\right)\right)\\
\label{KL3b}&+\E\left(\left(e^{i\langle sv,\p_{t}(x)\rangle}-e^{i\left\langle sv,
\bp(x)\right\rangle}\right)f\left(\bp(x)\right)\right).
\end{align}
Now we estimate (\ref{KL3a}) and (\ref{KL3b}) separately. By the definition of map $\bp$ we know
that $\bp(0)=0$, so $\left|\bp(x)\right|\leq|Mx|$. Then condition (L2) implies that
$|\p_t(x)|\leq|t||Q|+|M||x|$ and so
\begin{align}
\label{KL3c}&\frac{\left|\E\left(e^{i\langle
sv,\p_{t}(x)\rangle}\left(f(\p_{t}(x))-f\left(\bp(x)\right)\right)\right)\right|}{(1+|x|)^{\rho}}\leq\E\left(\frac{\left|f(\p_{t}(x))-f\left(\bp(x)\right)\right|}{(1+|x|)^{\rho}}\right)\\
\nonumber&\leq[f]_{\el}\cdot\E\left(\frac{\left|\p_t(x)-\bp(x)\right|^{\ep}(1+|\p_t(x)|)^{\la}\left(1+\left|\bp(x)\right|\right)^{\la}}{(1+|x|)^{\rho}}\right)\\
\nonumber&\leq|t|^{\ep}[f]_{\el}\cdot\E\left(\frac{|Q|^{\ep}(1+|t||Q|+|M||x|)^{2\la}}{(1+|x|)^{\rho}}\right)\\
\nonumber&\leq3^{2\la}|t|^{\ep}[f]_{\el}\cdot\E\left(|Q|^{\ep}+|t|^{2\la}|Q|^{2\la+\ep}+|Q|^{\ep}|M|^{2\la}\right)\leq
D_1|t|^{\ep}[f]_{\el}.
\end{align}
 It is easy to see that the H\"{o}lder inequality, (H5) and (L3) applied to
$D_1=3^{2\la}\cdot\E\left(|Q|^{\ep}+|Q|^{2\la+\ep}+|Q|^{\ep}|M|^{2\la}\right)$ ensures that
$D_1<\8$. %Let estimate the expression in
For (\ref{KL3b}), we have
\begin{align}
\label{KL3d}&\frac{\left|\E\left(\left(e^{i\langle sv,\p_{t}(x)\rangle}-e^{i\left\langle sv,
\bp(x)\right\rangle}\right)f\left(\bp(x)\right)\right)\right|}{(1+|x|)^{\rho}}\\
\nonumber&\leq|f|_{\rho}\cdot\E\left(\frac{\left|e^{i\left\langle
sv,\p_t(x)-\bp(x)\right\rangle}-1\right|(1+|M||x|)^{\rho}}{(1+|x|)^{\rho}}\right)\\
\nonumber&\leq2|s|^{\ep}|t|^{\ep}|f|_{\rho}\cdot\E\left(|Q|^{\ep}(1+|M|)^{\rho}\right)\\
\nonumber&\leq2^{\rho+1}|s|^{\ep}|t|^{\ep}|f|_{\rho}\cdot\E\left(|Q|^{\ep}+|Q|^{\ep}|M|^{\rho}\right)\leq
D_2|t|^{\ep}|f|_{\rho},
\end{align}
where the constant
$D_2=2^{\rho+1}\cdot\E\left(|Q|^{\ep}+|Q|^{\ep}|M|^{\rho}\right)$
is also finite by the H\"{o}lder inequality, (H5) and (L3).
Combining (\ref{KL3c}) with (\ref{KL3d}) we obtain (\ref{KL3A})
with $C_4=\max\{D_1, D_2\}$.

In order to prove (\ref{KL3B}) notice that
\begin{align}
\label{KL3e}&\frac{\left|\left(\F_{s,t,v}-\F_{0,t,v}\right)f(x)\right|}{(1+|x|)^{\rho}}\leq\E\left(\frac{\left|e^{i\langle
sv,\p_t(x)\rangle}f(\p_t(x))-f(\p_t(x))\right|}{(1+|x|)^{\rho}}\right)\\
\nonumber&\leq\E\left(\frac{|e^{i\langle
sv,\p_t(x)\rangle}-1||f(\p_t(x))-f(0)|}{(1+|x|)^{\rho}}\right)+\E\left(\frac{|e^{i\langle
sv,\p_t(x)\rangle}-1||f(0)|}{(1+|x|)^{\rho}}\right)\\
\nonumber&\leq2|s|^{\ep}\left([f]_{\el}\cdot\E\left(\frac{|\p_t(x)|^{2\ep}(1+|\p_t(x)|)^{\la}}{(1+|x|)^{\rho}}\right)+|f|_{\rho}\cdot\E\left(\frac{|\p_t(x)|^{\ep}}{(1+|x|)^{\rho}}\right)\right)\\
\nonumber&\leq2^{\la+1}|s|^{\ep}\|f\|_{\rel}\cdot\E\left(\frac{|\p_{t}(x)|^{2\ep}+|\p_{t}(x)|^{\la+2\ep}+|\p_{t}(x)|^{\ep}}{(1+|x|)^{\rho}}\right)\\
\nonumber&\leq C_5|s|^{\ep}\|f\|_{\rel},
\end{align}
where
$C_5=2^{\la+1}\cdot\E\left(\left(1+|M|+|Q|\right)^{2\ep}+\left(1+|M|+|Q|\right)^{\la+2\ep}+\left(1+|M|+|Q|\right)^{\ep}\right)$
is finite by (H5) and (L3). Hence (\ref{KL3e}) proves (\ref{KL3B}) and finally it completes the
proof of the Lemma.
\end{proof}
\begin{lem}\label{EigenvalueP}
The unique eigenvalue of modulus $1$ for operator $P$ acting on
$\ct$ is $1$ and the eigenspace is one dimensional. The
corresponding projection on $\C\cdot1$ is given by the map
$f\mapsto\nu(f)$.
\end{lem}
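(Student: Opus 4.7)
The plan is to reduce everything to the pointwise limit $P^nf(x)\to\nu(f)$ for $f\in\ct$. First observe that $P\cdot 1=1$, so $1$ is an eigenvalue of $P$ with the constant function in its eigenspace. Now suppose $Pf=zf$ with $f\in\ct$ and $|z|=1$; iterating gives $z^nf(x)=P^nf(x)$ for every $n\in\N$ and $x\in\R^d$.

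The core step is to evaluate the right-hand side in the limit. Since the $\te_k$'s are i.i.d., the forward iterate $X_n^x=\p_{\te_n}\circ\ldots\circ\p_{\te_1}(x)$ and the backward iterate $Y_n^x=\p_{\te_1}\circ\ldots\circ\p_{\te_n}(x)$ have the same distribution, so $P^nf(x)=\E f(Y_n^x)$. Letac's principle cited in the introduction gives $Y_n^x\to S$ almost surely with law $\nu$. To push this a.s.\ convergence under the expectation of the continuous function $f$, I would establish uniform integrability via Lemma \ref{bound3}: since $\rho=2\la<2\la+\ep<\al$, I can pick $\be\in(\rho,\al)$ so that $\sup_n\E|Y_n^x|^{\be}<\8$; combined with the growth bound $|f(y)|\le|f|_{\rho}(1+|y|)^{\rho}$, this yields $\sup_n\E|f(Y_n^x)|^{\be/\rho}<\8$, which secures uniform integrability. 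Hence $\lim_n P^nf(x)=\E f(S)=\nu(f)$ for every $x\in\R^d$.

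Returning to the eigenvalue equation, $z^nf(x)\to\nu(f)$ for every $x$. If $f$ is not identically zero, choose $x_0$ with $f(x_0)\ne 0$; then $z^n\to\nu(f)/f(x_0)$ converges, and since $|z|=1$ this forces $z=1$, whence $f(x)=\nu(f)$ for all $x$, i.e.\ $f$ is the constant $\nu(f)$. Thus $1$ is the only modulus--one eigenvalue of $P$ acting on $\ct$, and the corresponding eigenspace equals the one--dimensional subspace $\C\cdot 1$.

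To identify the projection onto $\C\cdot 1$, consider $\Pi_0\colon f\mapsto\nu(f)\cdot 1$. It is a bounded linear operator on $\ct$ with range $\C\cdot 1$; invariance of $\nu$ under $P$ gives $\Pi_0 P=P\Pi_0=\Pi_0$, and $\nu(1)=1$ gives $\Pi_0^2=\Pi_0$, so $\Pi_0$ is a projection onto the eigenspace that commutes with $P$. The pointwise identity $\lim_n P^nf(x)=\Pi_0f(x)$ established above shows that $\Pi_0$ is canonically the one associated with the eigenvalue $1$. The sole obstacle in the argument is the uniform integrability step, which hinges on the moment bound of Lemma \ref{bound3}; everything else is a soft consequence of Letac's a.s.\ convergence and the invariance of $\nu$.
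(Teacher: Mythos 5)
Your proof is correct. Note that the paper itself gives no argument here — it simply defers to Section 3 of \cite{BDG} — and what you have written is precisely the standard route that reference takes: pass from $P^nf(x)=\E f(X_n^x)$ to the backward iterates $Y_n^x$, use their a.s.\ convergence to $S$ together with uniform integrability supplied by the moment bound of Lemma \ref{bound3} (valid since $\rho<\al$) to get $P^nf(x)\to\nu(f)$ pointwise, and then observe that a convergent sequence $z^n$ with $|z|=1$ forces $z=1$ and the eigenfunction to be constant.
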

\begin{proof}
The proof is the same as in Lemma \ref{EigenvalueT}.
\end{proof}
Recall, that for every $n\in\N$,
\begin{align*}
T_v^{n}f(x)=\E\left(e^{i\left\langle v, \sum_{k=1}^{n}\bp_{k}\circ\ldots\circ
\bp_{1}(x)\right\rangle}f\left(\bp_{n}\circ\ldots\circ \bp_{1}(x)\right)\right),
\end{align*}
\begin{lem}\label{EigenvalueT}
The unique eigenvalue of modulus $1$ for operator $T_v$ acting on $\ct$ is $1$ with the eigenspace $\C\cdot h_v(x)$, where
\begin{align*}
h_v(x)=\E\left(e^{i\left\langle v, \sum_{k=1}^{\8}\bp_{k}\circ\ldots\circ
\bp_{1}(x)\right\rangle}\right).
\end{align*}
%and the corresponding eigenspace is the set $\C\cdot h_v(x)$.
\end{lem}
\begin{proof} Observe that the random variables $\sum_{k=1}^{\8}\bp_{k}\circ\ldots\circ \bp_{1}(x)$ and
$\sum_{k=2}^{\8}\bp_{k}\circ\ldots\circ \bp_{2}(x)$ have the same law, hence
\begin{align*}
T_vh_v(x)&=\E\left(e^{i\left\langle v, \bp_{1}(x)\right\rangle}h_v\left(\bp_{1}(x)\right)\right)=
\E\left(e^{i\left\langle v, \bp_{1}(x)\right\rangle}e^{i\left\langle v,\sum_{k=2}^{\8}\bp_{k}\circ\ldots\circ \bp_{2}\left(\bp_{1}(x)\right)\right\rangle}\right)\\
&=\E\left(e^{i\left\langle v, \bp_{1}(x)\right\rangle}e^{i\left\langle
v,\sum_{k=2}^{\8}\bp_{k}\circ\ldots\circ \bp_{2}\circ\bp_{1}(x)\right\rangle}\right)=h_v(x).
\end{align*}
This proves that $1$ is eigenvalue for $T_v$ and by Lemma \ref{KL1} we know that $T_v$ acts on
$\ct$. Let $f\in\ct$ be such that $T_v^{n}f(x)=f(x)$. Since $h_v(0)=1$ and
$\lim_{n\rightarrow\8}\bp_{n}\circ\ldots\circ \bp_{1}(x)=0$ a.e. we have
\begin{align*}
|f(x)-f(0)h_v(x)|&\leq\E\left(\left|f\left(\bp_{n}\circ\ldots\circ \bp_{1}(x)\right)-f(0)h_v\left(\bp_{n}\circ\ldots\circ \bp_{1}(x)\right)\right|\right)\\
&\ _{\overrightarrow{n\rightarrow\8}}\ |f(0)-f(0)h_v(0)|=0.
\end{align*}
Hence $f(x)=f(0)h_v(x)$. Now assume that for a $z$ of modulus $1$ and a nontrivial
$f\in\ct$ we have $T_vf(x)=zf(x)$. Then for every $x$ such that $f(x)\not=0$
\begin{align*}
z^nf(x)=T^n_vf(x)&=\E\left(e^{i\left\langle v, \sum_{k=1}^{n}\bp_{k}\circ\ldots\circ
\bp_{1}(x)\right\rangle}\left(f\left(\bp_{n}\circ\ldots\circ \bp_{1}(x)\right)-f(0)\right)\right)\\
&+\E\left(e^{i\left\langle v, \sum_{k=1}^{n}\bp_{k}\circ\ldots\circ
\bp_{1}(x)\right\rangle}f(0)\right)\ _{\overrightarrow{n\rightarrow\8}}\ f(0)h_v(x),
\end{align*}
i.e.
\begin{align*}
\lim_{n\rightarrow\8}z^n=\frac{f(0)}{f(x)}h_v(x)
\end{align*}
but this is impossible. %, because the series $z^n$ does not have limit for complex numbers of modulus
%$1$ different of $1$.
\end{proof}
Recall that the essential spectral radius $r_e(T)$ of the operator $T$ is the smallest nonnegative
number $l$ for which elements of the spectrum outside of the disk of radius $l$ centered at the
origin are isolated eigenvalues of finite multiplicity.
\begin{lem}\label{KL4}
If $z\in\sigma(P_{t, v})$ or $z\in\sigma(T_{t, v})$ and $|z|>\varrho$ where $0<\varrho<1$ is
defined as in Lemma \ref{KL2}, then $z$ does not belong to the residual spectrum of operator $P_{t,
v}$ or $T_{t, v}$.
\end{lem}
\begin{proof}
We have to show that $r_e(P_{t, v})\leq\varrho$ and $r_e(T_{t, v})\leq\varrho$ for any $t\in[0,
1]$. It means that if $z\in\sigma(P_{t, v})$ or $z\in\sigma(T_{t, v})$ and $|z|>\varrho$, then $z$
belongs to the point spectrum of operator $P_{t, v}$ or $T_{t, v}$. In order to prove above
consider two cases:
\begin{itemize}
\item %Consider the case $t\in[0, 1]$ such that
$r(T_{t, v})\leq\varrho$, then $r_e(T_{t, v})\leq r(T_{t,
v})\leq\varrho$ and the conclusion follows.
\item %Now consider the case $t\in[0, 1]$ such that
$r(T_{t, v})>\varrho$, then by
Lemmas \ref{KL1} and \ref{KL2}, Remark \ref{Arcela} % and the fact that identity operator $\bl1\hookrightarrow\ct$ is compact for $\ep+\la<\rho$ in view of
and Theorem of Ionescu Tulcea and Marinescu \cite{ITM}, the operator $T_{t, v}$ is quasi--compact
and $r_e(T_{t, v})\leq\varrho$.
\end{itemize}
In a similar way the conclusion follows for operators $P_{t, v}$. It is easy to see that operators
$P$ and $T_v$ are quasi--compact.
\end{proof}
\begin{proof}[Proof of Proposition (\ref{KL})]
In view of Lemmas \ref{KL1}, \ref{KL2}, \ref{KL3}, \ref{EigenvalueP}, \ref{EigenvalueT} and
\ref{KL4} we may use the perturbation theorem of Keller and Liverani \cite{KL} for the operators
$P_{t, v}$ and $T_{t, v}$ to get Proposition.
\end{proof}
\section{Rate of convergence}
In all the lemmas and theorems below we assume that hypotheses of Proposition \ref{KL} hold. To
write down an expansion of $k_v(t)$ sufficiently good for the limit Theorem \ref{limitthm} we
approximate the peripherical eigenfunction $\Pi_{T, t}h_v $ by $\Pi_{T, 0}h_v=h_v$. Section 6 is
the main novelty in the proof of Theorem \ref{limitthm}.
\subsection{Rate of convergence of projections.}
Now we want to know what is the rate of convergence of $\|(\Pi_{T, t}-\Pi_{T,
0})h_v\|_{\rel}$, where $h_v$ is the peripherical eigenfunction of $T_v$. More precisely we will prove following
\begin{thm}\label{diffrencePI}
Let $h_v$ be the eigenfunction for operator $T_v$ defined in Lemma \ref{EigenvalueT}. Then for any
$0<\de\leq1$ such that $\ep<\de<\al$ there exists $C>0$ such that
\begin{align}\label{diffPI1}
\left\|((\Pi_{T, t}-\Pi_{T, 0})h_v)\circ\de_{t}\right\|_{\rel}\leq C|t|^{\de},
\end{align}
for every $|t|\leq t_0$. Moreover, for every $x\in\R^d$ and every $|t|\leq t_0$
\begin{align}\label{diffPI2}
\left|\Pi_{T, t}(h_v)(tx)-\Pi_{T, 0}(h_v)(tx)\right|\leq C|t|^{\de}(1+|x|)^{\rho}.
\end{align}
\end{thm}

For affine recursions, \eqref{diffPI1} and \eqref{diffPI2} where
obtained by very particular computations based on the fact that
the Fourier transform sees dilatations, modulations and
translations. In \cite{GL1} and \cite{BDG} the authors expressed
explicitly eigenvectors associated with dominant eigenvalues in
terms of the Fourier transform and in this way they got
sufficiently good estimates of the rate of convergence in the
fractional expansions. Their elegant and very tricky proof is not
applicable to general non affine recursions. We will proceed
differently.
 Our method is based on spectral properties of operators $T_{t, v}$
 and $T_v$ that were
defined in the previous section and which are strongly connected with operators $P_{t, v}$ and $P$.
First we prove a number of lemmas.
\begin{lem}
Assume that $\zeta:\R^d\rightarrow\R^d$ is a Lipschitz map with Lipschitz constant $L_{\zeta}$.
Then $h_v\circ\zeta\in\bl1$, where $h_v$ is the eigenfunction of $T_v$ defined in Lemma
\ref{EigenvalueT}. Moreover,
\begin{align}
\label{ineqh1}&|h_v(\zeta(x))|\leq1,\\
\label{ineqh2}&|h_v(\zeta(x))-h_v(\zeta(y))|\leq \frac{2}{1-\ka(\de)}|L_{\zeta}|^{\de}|x-y|^{\de},
\end{align}
for every $x, y\in\R^d$ and every $0<\de\leq 1$ such that $0<\de<\al$.
\end{lem}
\begin{proof} Observe, that
\begin{align*}
|h_v(\zeta(x))|\leq \E\left(\left|e^{i\left\langle v, \sum_{k=1}^{\8}\bp_{k}\circ\ldots\circ
\bp_{1}(\zeta(x))\right\rangle}\right|\right)=1,
\end{align*}
and
\begin{align*}
|h_v(\zeta(x))-h_v(\zeta(y))|&\leq\E\left(\left|e^{i\left\langle v,
\sum_{k=1}^{\8}\bp_{k}\circ\ldots\circ \bp_{1}(\zeta(x))\right\rangle}-e^{i\left\langle v,
\sum_{k=1}^{\8}\bp_{k}\circ\ldots\circ
\bp_{1}(\zeta(y))\right\rangle}\right|\right)\\
&\leq2\sum_{k=1}^{\8}\left|\bp_{k}\circ\ldots\circ \bp_{1}(\zeta(x))-\bp_{k}\circ\ldots\circ
\bp_{1}(\zeta(y))\right|^{\de}\\
&\leq \frac{2}{1-\ka(\de)}|L_{\zeta}|^{\de}|x-y|^{\de}.
\end{align*}
This proves (\ref{ineqh1}) and (\ref{ineqh2}). %In order to show that $h_v\circ\zeta\in\bl1$ we will
%use above estimates. Finally, we obtain
Moreover,
\begin{align*}
\|h_v\circ\zeta\|_{\rel}\leq1+\frac{2}{1-\ka(\ep)}|L_{\zeta}|^{\ep},
\end{align*}
and so $h_v\circ\zeta\in\bl1$.
\end{proof}
\begin{lem}\label{rateprojlem}
Assume that the function $f$ satisfies $|f(x)|\leq C$ for any $x\in\R^d$ and $|f(x)-f(y)|\leq
C|x-y|^{\de}$ for any $0<\de\leq1$ and $x, y\in\R^d$, where constant $C>0$ depends on $\de$. Then
for every $\de\in(\ep, \al)$ %we have following inequalities
\begin{align}
\label{rplA}&\left[(T_{t, v}-T_{v})f\right]_{\el}\leq C_1|t|^{\de-\ep},\\
\label{rplB}&\left|(T_{t, v}-T_{v})f\right|_{\rho}\leq C_2|t|^{\de},
\end{align}
where  $C_1>0$ and $C_2>0$ does not depend on $t$.
\end{lem}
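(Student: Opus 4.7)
The plan is to write $(T_{t,v}-T_v)f(x)=\int_\Theta G_\theta(x)\,\mu(d\theta)$, where
\begin{align*}
G_\theta(x)=e^{i\langle v,\p_{\te,t}(x)\rangle}f(\p_{\te,t}(x))-e^{i\langle v,\bp_\te(x)\rangle}f(\bp_\te(x)),
\end{align*}
and estimate each integrand $G_\theta$ in two different ways, then interpolate. For (\ref{rplB}), I would split
$G_\theta=e^{i\langle v,\p_{\te,t}(x)\rangle}(f(\p_{\te,t}(x))-f(\bp_\te(x)))+(e^{i\langle v,\p_{\te,t}(x)\rangle}-e^{i\langle v,\bp_\te(x)\rangle})f(\bp_\te(x))$, bound each factor using $|f(a)-f(b)|\le C|a-b|^\de$, $|e^{iy}-e^{iz}|\le2|y-z|^\de$ for $\de\le1$, and the smoothness assumption (L2) $|\p_{\te,t}(x)-\bp_\te(x)|\le t|Q_\te|$. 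This yields the pointwise bound $|G_\theta(x)|\le3Ct^\de|Q_\te|^\de$, and integrating in $\theta$ gives $|(T_{t,v}-T_v)f(x)|\le 3Ct^\de\,\E|Q|^\de$. Since $\de<\al$, (L3) ensures $\E|Q|^\de<\8$, which establishes (\ref{rplB}).

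For (\ref{rplA}), I would bound $|G_\theta(x)-G_\theta(y)|$ in two complementary ways. First, directly via the sup bound just established: $|G_\theta(x)-G_\theta(y)|\le 6Ct^\de|Q_\te|^\de$. Second, by a Hölder bound in $x$: splitting as $G_\theta(x)-G_\theta(y)=[e^{i\langle v,\p_{\te,t}(x)\rangle}f(\p_{\te,t}(x))-e^{i\langle v,\p_{\te,t}(y)\rangle}f(\p_{\te,t}(y))]-[\text{same with }\bp_\te]$ and using the Lipschitz bounds $|\p_{\te,t}(x)-\p_{\te,t}(y)|\le L_\te|x-y|$ (from Lemma \ref{series}) and $|\bp_\te(x)-\bp_\te(y)|\le |M_\te||x-y|$ together with Hölder-$\de$ regularity of $f$ and of $e^{i\langle v,\cdot\rangle}$, I would get $|G_\theta(x)-G_\theta(y)|\le 6C(L_\te^\de+|M_\te|^\de)|x-y|^\de\le 12C|M_\te|^\de|x-y|^\de$, where the last step uses (L1).

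Now comes the key step: for each $\theta$ I would use the interpolation inequality $\min(A,B)\le A^{1-\epsilon/\de}B^{\epsilon/\de}$ with $A=6Ct^\de|Q_\te|^\de$ and $B=12C|M_\te|^\de|x-y|^\de$, yielding
\begin{align*}
|G_\theta(x)-G_\theta(y)|\le C'\,t^{\de-\ep}|Q_\te|^{\de-\ep}|M_\te|^\ep|x-y|^\ep.
\end{align*}
Integrating in $\theta$ and applying Hölder's inequality with exponents $\de/(\de-\ep)$ and $\de/\ep$ gives $\E(|Q|^{\de-\ep}|M|^\ep)\le(\E|Q|^\de)^{1-\ep/\de}(\E|M|^\de)^{\ep/\de}$, which is finite because $\de<\al$ and both moments are controlled by (L3) and the convexity of $\ka$ (see Remark \ref{rem2}). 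Finally, since $\la>0$, the weight $(1+|x|)^\la(1+|y|)^\la$ in the seminorm is $\ge1$, so
\begin{align*}
[(T_{t,v}-T_v)f]_{\el}\le C_1\,t^{\de-\ep},
\end{align*}
establishing (\ref{rplA}).

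The main obstacle is really the bookkeeping in the interpolation: one needs the two pointwise bounds on $G_\theta$ to combine into an integrable bound through a single application of Hölder, and the exponent $\de\in(\ep,\al)$ is precisely what makes both $t^{\de-\ep}\to0$ and the moment $\E(|Q|^{\de-\ep}|M|^\ep)$ finite. Once this balance is set up correctly, both assertions follow with no further work, and the argument is entirely pointwise in $\theta$, leaving the Lipschitz contraction $L_\te\le|M_\te|$ of (L1) as the only place where the assumptions of \ref{limitass} beyond (L2)--(L3) enter.
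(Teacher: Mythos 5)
Your proof is correct, and the two pointwise estimates you derive for $G_{\te}(x)-G_{\te}(y)$ --- the uniform bound $O(t^{\de}|Q_{\te}|^{\de})$ coming from (L2), and the H\"older-in-space bound $O(|M_{\te}|^{\de}|x-y|^{\de})$ coming from the Lipschitz property and (L1) --- are exactly the two estimates the paper uses. The only difference is the device for combining them: the paper splits the supremum defining $[\,\cdot\,]_{\el}$ into the regimes $|x-y|\le t$ and $|x-y|>t$, using the spatial bound in the first (where $|x-y|^{\de-\ep}\le t^{\de-\ep}$) and the uniform bound in the second (where $t^{\de}/|x-y|^{\ep}\le t^{\de-\ep}$), whereas you take the geometric interpolation $\min(A,B)\le A^{1-\ep/\de}B^{\ep/\de}$ of the two bounds before integrating in $\te$. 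These are equivalent; your version costs one extra application of H\"older's inequality to control $\E\left(|Q|^{\de-\ep}|M|^{\ep}\right)$ by $\left(\E|Q|^{\de}\right)^{1-\ep/\de}\left(\E|M|^{\de}\right)^{\ep/\de}$, but both moments are finite for $\de<\al$ exactly as in the paper's two separate cases, so no additional hypotheses are introduced. Your treatment of (\ref{rplB}) coincides with the paper's.
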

\begin{proof} %One can think that $f=h_v$ where $h_v$ is function from lemma \ref{EigenvalueT}.
In order to show (\ref{rplA}) we have to estimate the seminorm $\left[(T_{t, v}-T_{v})f\right]_{\el}$.
Notice, that
\begin{align}
\label{rpla}\left[(T_{t, v}-T_{v})f\right]_{\el}&\leq\sup_{x\not=y, |x-y|\leq t}\frac{\left|(T_{t, v}-T_{v})f(x)-(T_{t, v}-T_{v})f(y)\right|}{|x-y|^{\ep}(1+|x|)^{\la}(1+|y|)^{\la}}\\
\nonumber&+\sup_{x\not=y, |x-y|>t}\frac{\left|(T_{t, v}-T_{v})f(x)-(T_{t,
v}-T_{v})f(y)\right|}{|x-y|^{\ep}(1+|x|)^{\la}(1+|y|)^{\la}}.
\end{align}
For the first term in (\ref{rpla}) ($|x-y|\leq t$) we observe that
\begin{align}
\nonumber(T_{t, v}-T_{v})f(x)&-(T_{t, v}-T_{v})f(y)=\\
\label{rplb}&=\E\left(\left(e^{i\langle
v,\p_{t}(x)\rangle}-e^{i\langle v,\p_{t}(y)\rangle}\right)f(\p_{t}(x))\right)\\
\label{rplc}&+\E\left(e^{i\langle v,\p_{t}(y)\rangle}\left(f(\p_{t}(x))-f(\p_{t}(y))\right)\right)\\
\label{rpld}&-\E\left(\left(e^{i\left\langle v,\bp(x)\right\rangle}-e^{i\left\langle
v,\bp(y)\right\rangle}\right)f\left(\bp(x)\right)\right)\\
\label{rple}&- \E\left(e^{i\left\langle
v,\bp(y)\right\rangle}\left(f\left(\bp(x)\right)-f\left(\bp(y)\right)\right)\right).
\end{align}
%Above formula is crucial to bound the first term of (\ref{rpla}).
We will estimate (\ref{rplb}), (\ref{rplc}), (\ref{rpld}) and
(\ref{rple}) separately. By the assumptions on the function $f$
observe, that for every $0<\de\leq1$ such that $\ep<\de<\al$ we
have
\begin{align}
\label{rplf}\E\left(\frac{\left|e^{i\langle v,\p_{t}(x)\rangle}-e^{i\langle
v,\p_{t}(y)\rangle}\right||f(\p_{t}(x))|}{|x-y|^{\ep}(1+|x|)^{\la}(1+|y|)^{\la}}\right)
&\leq 2C\E\left(\frac{|\p_{t}(x)-\p_{t}(y)|^{\de}}{|x-y|^{\ep}}\right)\\
\nonumber\leq 2C\E\left(|M|^{\de}\right)|x-y|^{\de-\ep}&\leq
2C\E\left(|M|^{\de}\right)|t|^{\de-\ep}.
\end{align}
Similarly we obtain the estimate of the second term. Indeed,
\begin{align}
\label{rplg}\E\left(\frac{\left|e^{i\langle
v,\p_{t}(y)\rangle}(f(\p_{t}(x))-f(\p_{t}(y)))\right|}{|x-y|^{\ep}(1+|x|)^{\la}(1+|y|)^{\la}}\right)&\leq
\E\left(\frac{\left|f(\p_{t}(x))-f(\p_{t}(y))\right|}{|x-y|^{\ep}}\right)\\
\nonumber\leq 2C\E\left(|M|^{\de}\right)|x-y|^{\de-\ep}&\leq
2C\E\left(|M|^{\de}\right)|t|^{\de-\ep}.
\end{align}
Remaining (\ref{rpld}) and (\ref{rple}) are estimated in the similar way. Now consider the second term of (\ref{rpla}) ($|x-y|> t$) and notice, that
\begin{align}
\nonumber(T_{t, v}-T_{v})f(x)&-(T_{t, v}-T_{v})f(y)\\
\label{rplh}&=\E\left(\left(e^{i\langle v,\p_{t}(x)\rangle}-e^{i\left\langle
v,\bp(x)\right\rangle}\right)f(\p_{t}(x))\right)\\
\label{rpli}&+ \E\left(e^{i\left\langle
v,\bp(x)\right\rangle}\left(f(\p_{t}(x))-f\left(\bp(x)\right)\right)\right)\\
\label{rplj}&-\E\left(\left(e^{i\langle v,\p_{t}(y)\rangle}-e^{i\left\langle
v,\bp(y)\right\rangle}\right)f(\p_{t}(y))\right)\\
\label{rplk}&-\E\left(e^{i\left\langle
v,\bp(y)\right\rangle}\left(f\left(\p_{t}(y)\right)-f\left(\bp(y)\right)\right)\right).
\end{align}
As before we will estimate (\ref{rplh}), (\ref{rpli}), (\ref{rplj}) and (\ref{rplk}) separately using (L2) and (L3). Indeed, for every $0<\de\leq1$ such that
$\ep<\de<\al$ we have
\begin{align}
\label{rpll}\E\left(\frac{\left|e^{i\langle v,\p_{t}(x)\rangle}-e^{i\left\langle
v,\bp(x)\right\rangle}\right||f(\p_{t}(x))|}{|x-y|^{\ep}(1+|x|)^{\la}(1+|y|)^{\la}}\right)&\leq 2C
\E\left(\frac{\left|\p_{t}(x)-\bp(x)\right|^{\de}}{|x-y|^{\ep}}\right)\\
\nonumber\leq2C\E\left(\frac{|t|^{\de}|Q|^{\de}}{|x-y|^{\ep}}\right)&\leq
2C\E\left(|Q|^{\de}\right)|t|^{\de-\ep}.
\end{align}
Similarly we obtain the estimate for the second term. Indeed,
\begin{align}
\label{rplm}&\E\left(\frac{\left|e^{i\left\langle
v,\bp(x)\right\rangle}\left(f(\p_{t}(x))-f\left(\bp(x)\right)\right)\right|}{|x-y|^{\ep}(1+|x|)^{\la}(1+|y|)^{\la}}\right)\leq
\E\left(\frac{\left|f(\p_{t}(x))-f\left(\bp(x)\right)\right|}{|x-y|^{\ep}}\right)\\
\nonumber&\leq2C\E\left(\frac{\left|\p_t(x)-\bp(x)\right|^{\de}}{|x-y|^{\ep}}\right)\leq
2C\E\left(\frac{|t|^{\de}|Q|^{\de}}{|x-y|^{\ep}}\right) \leq
2C\E\left(|Q|^{\de}\right)|t|^{\de-\ep}.
\end{align}
Also remaining (\ref{rplj}) and (\ref{rplk}) can be estimated similarly.
Hence, in view of (\ref{rplf}), (\ref{rplg}), (\ref{rpll}) and (\ref{rplm}),
we obtain (\ref{rplA}). For (\ref{rplB}) %we have to estimate norm $|(T_{t, v}-T_{v})h_v|_{\rho}$. For this purpose
notice that
\begin{align}
\label{rpln}(T_{t, v}-T_{v})f(x)&=\E\left(\left(e^{i\langle v,\p_{t}(x)\rangle}-e^{i\left\langle
v,\bp(x)\right\rangle}\right)f(\p_{t}(x))\right)\\
\nonumber&+\E\left(e^{i\left\langle
v,\bp(x)\right\rangle}\left(f(\p_{t}(x))-f\left(\bp(x)\right)\right)\right).
\end{align}
Therefore,
\begin{align}
\label{rplo}\E\left(\frac{\left|e^{i\langle v,\p_{t}(x)\rangle}-e^{i\left\langle
v,\bp(x)\right\rangle}\right||f(\p_{t}(x))|}{(1+|x|)^{\rho}}\right)&\leq 2C
\E\left(\frac{\left|\p_{t}(x)-\bp(x)\right|^{\de}}{(1+|x|)^{\rho}}\right)\\
\nonumber \leq2C\E\left(\frac{|t|^{\de}|Q|^{\de}}{(1+|x|)^{\rho}}\right)&\leq
2C\E\left(|Q|^{\de}\right)|t|^{\de},
\end{align}
and
\begin{align}
\label{rplp}&\E\left(\frac{\left|e^{i\left\langle
v,\bp(x)\right\rangle}\left(f(\p_{t}(x))-f\left(\bp(x)\right)\right)\right|}{(1+|x|)^{\rho}}\right)
\leq \E\left(\frac{\left|f(\p_{t}(x))-f\left(\bp(x)\right)\right|}{(1+|x|)^{\rho}}\right)\\
\nonumber&\leq 2C\E\left(\frac{\left|\p_t(x)-\bp(x)\right|^{\de}}{(1+|x|)^{\rho}}\right) \leq
2C\E\left(\frac{|t|^{\de}|Q|^{\de}}{(1+|x|)^{\rho}}\right)\leq 2C\E\left(|Q|^{\de}\right)|t|^{\de}.
\end{align}
Combining (\ref{rplo}) with (\ref{rplp}) we obtain (\ref{rplB}) which completes the proof of the
Lemma.
\end{proof}

%Next lemma will play a prominent role in our considerations.
\begin{lem}\label{hololem1}
Assume that $f\in\bl1$, $x\in\R^d$ and $t\in[0, t_0]$. Then
\begin{align}
\label{hl1}\left((z-P_{t, v})^{-1}(f\circ\de_{t})\right)(x)=\left((z-T_{t, v})^{-1}f\right)(tx),
\end{align}
for every $z\in\D^c$.
\end{lem}
\begin{proof}
If $f\in\bl1$, then $f\circ\de_{t}\in\bl1$. $(z-P_{t, v})^{-1}$ and $(z-T_{t, v})^{-1}$ are
holomorphic for every $z\in\D^c$. Moreover $\left((z-P_{t, v})^{-1}(f\circ\de_{t})\right)\in\bl1$
and $\left((z-T_{t, v})^{-1}f\right)\in\bl1$. Furthermore, when $x\in\R^d$ and $t\in[0, t_0]$ are
fixed, the maps
\begin{align*}
\bl1\ni f\longrightarrow f(x)\in\C,\ \ \ \ \ \mbox{and}\ \ \ \ \ \bl1\ni f\longrightarrow
f(tx)\in\C,
\end{align*}
are continuous linear functional on $\bl1$. Therefore,
\begin{align}
\label{hla}\D^c\ni z&\longrightarrow\left((z-P_{t, v})^{-1}(f\circ\de_{t})\right)(x),\\
\label{hlb}\D^c\ni z&\longrightarrow\left((z-T_{t, v})^{-1}f\right)(tx),
\end{align}
are holomorphic in $\D^c$. In order to prove (\ref{hl1}) notice that $\D^c$ is connected open
subset of $\C$. Since $r({P_{t, v}})\leq1$ and $r({T_{t, v}})\leq1$, then
\begin{align*}
(z-P_{t, v})^{-1}=\sum_{n=0}^{\8}\frac{P_{t, v}^{n}}{z^{n+1}},\ \ \ \ \ \mbox{and}\ \ \ \ \
(z-T_{t, v})^{-1}=\sum_{n=0}^{\8}\frac{T_{t, v}^{n}}{z^{n+1}},
\end{align*}
for every $|z|>1$. Let $z\in \mathcal{Z}=\{z\in\C: |z|=2\}$, then by Lemma \ref{connectionPT}
\begin{align*}
\left((z-T_{t, v})^{-1}f\right)(tx)&=\sum_{n=0}^{\8}\frac{(T_{t,
v}^{n}f)(tx)}{z^{n+1}}=\sum_{n=0}^{\8}\frac{(P_{t,
v}^{n}(f\circ\de_{t}))(x)}{z^{n+1}}\\
&=\left((z-P_{t, v})^{-1}(f\circ\de_{t})\right)(x).
\end{align*}
Since $\mathcal{Z}$ has all its accumulation points in $\D^c$ and the holomorphic functions
(\ref{hla}) and (\ref{hlb}) coincide on the set $\mathcal{Z}$, then they have to coincide on
$\D^c$.
\end{proof}
\begin{lem}\label{hololem2}
Let $h_v$ be the eigenfunction for operator $T_v$ as in Lemma \ref{EigenvalueT}, then
\begin{align}
\label{hl2}(\Pi_{T, t}-\Pi_{T, 0})h_v=\frac{1}{2\pi}\int_{0}^{2\pi}\left(\xi e^{is}+1-T_{t,
v}\right)^{-1}\left((T_{t, v}-T_{v})h_v\right)ds.
\end{align}
\end{lem}
\begin{proof}
Notice, that
\begin{align*}
(z-T_v)^{-1}h_v&=\frac{1}{z-1}h_v,\\
(z-T_{t, v})^{-1}-(z-T_v)^{-1}&=(z-T_{t, v})^{-1}(T_{t, v}-T_{v})(z-T_v)^{-1},
\end{align*}
then
\begin{align*}
(\Pi_{T, t}-\Pi_{T, 0})h_v&=\frac{1}{2\pi i}\int_{|z-1|=\xi}\left((z-T_{t,
v})^{-1}-(z-T_v)^{-1}\right)h_vdz\\
&=\frac{1}{2\pi i}\int_{|z-1|=\xi}\left((z-T_{t, v})^{-1}(T_{t, v}-T_{v})(z-T_v)^{-1}\right)h_vdz\\
&=\frac{1}{2\pi i}\int_{|z-1|=\xi}\frac{1}{z-1}(z-T_{t,
v})^{-1}\left((T_{t, v}-T_v)h_v\right)dz\\
&=\frac{1}{2\pi}\int_{0}^{2\pi}\left(\xi e^{is}+1-T_{t,v}\right)^{-1}\left((T_{t,
v}-T_{v})h_v\right)ds,
\end{align*}
which completes the proof of (\ref{hl2}).
\end{proof}

\begin{proof}[Proof of Theorem (\ref{diffrencePI})] For every $f\in\bl1$ we have
\begin{align}
\label{ineqN}\|f\circ\de_{t}\|_{\rel}\leq\left\{ \begin{array}{ll}
                                            \ \ \ \ |f|_{\rho}+\ \ \ \ |t|^{\ep}[f]_{\el},      & \mbox{if $|t|\leq1$}\\
                                            |t|^{\rho}|f|_{\rho}+|t|^{2\la+\ep}[f]_{\el},         & \mbox{if $|t|>1$}\\
                                            \end{array} \right..
\end{align}
In view of (\ref{hl2}) and (\ref{hl1}) we have
\begin{align}
\label{diffPIa}\left((\Pi_{T, t}-\Pi_{T,
0})h_v\right)&(tx)=\\
\nonumber=\frac{1}{2\pi}&\int_{0}^{2\pi}\left(\left(\xi
e^{is}+1-T_{t, v}\right)^{-1}(T_{t, v}-T_{v})h_v\right)(tx)ds\\
\nonumber=\frac{1}{2\pi}&\int_{0}^{2\pi}\left(\left(\xi
e^{is}+1-P_{t, v}\right)^{-1}(((T_{t,
v}-T_{v})h_v)\circ\de_{t})\right)(x)ds.
\end{align}
 A straightforward
application of (\ref{diffPIa}), Proposition \ref{KL}, inequalities (\ref{ineqN}), (\ref{rplA}) and
(\ref{rplB})
%with function $h_v$ respectively gives
\begin{align*}
\|((\Pi_{T, t}&-\Pi_{T,
0})h_v)\circ\de_{t}\|_{\rel}\leq\\
&\leq\frac{1}{2\pi}\int_{0}^{2\pi}\left\|\left(\left(\xi
e^{is}+1-P_{t,
v}\right)^{-1}(((T_{t, v}-T_{v})h_v)\circ\de_{t})\right)\right\|_{\rel}ds\\
&\leq D\left\|((T_{t, v}-T_{v})h_v)\circ\de_{t}\right\|_{\rel}\\
&\leq D(|((T_{t, v}-T_{v})h_v)\circ\de_{t}|_{\rho}+[((T_{t,
v}-T_{v})h_v)\circ\de_{t}]_{\el})\\
&\leq D (|(T_{t, v}-T_{v})h_v|_{\rho}+|t|^{\ep}[(T_{t,
v}-T_{v})h_v]_{\el})\\
&\leq D (C_2|t|^{\de}+|t|^{\ep}C_1|t|^{\de-\ep})\\
&\leq C|t|^{\de},
\end{align*}
for every $|t|\leq t_0$ and the proof is finished.
\end{proof}
\subsection{Rate of convergence of eigenvalues.}
\begin{lem}
For every $f\in\bl1$, $\Pi_{T, t}(f)\circ\de_{t}$ is an
eigenfunction for operator $P_{t,v}$ corresponding to the
eigenvalue $k_v(t)$. Furthermore,
\begin{align}
\label{Taylorformula}(k_v(t)-1)\cdot\nu(\Pi_{T, t}(f)\circ\de_{t})=\nu\left(\left(e^{it\langle v,
\cdot\rangle}-1\right)\cdot(\Pi_{T, t}(f)\circ\de_{t})\right),
\end{align}
where $\nu$ is the stationary measure for the operator $P_v$.
\end{lem}
\begin{proof}
By Proposition \ref{KL} we know that $\Pi_{T, t}(f)$ is an
eigenfunction of $T_{t,v}$ with the eigenvalue $k_v(t)$ %for every
%$f\in\bl1$, and
and $\Pi_{T, t}(f)\circ\de_{t}$ is an eigenfunction for operator $P_{t,v}$ with the same eigenvalue
by Lemma \ref{connectionPT}. Now we show that (\ref{Taylorformula}) holds. Indeed, on one hand,
\begin{align*}
\nu\left(P_{t, v}\left(\Pi_{T, t}(f)\circ\de_{t}\right)\right)=&\int_{\R^d}\int_{\Te}e^{it\langle
v, \p_{\te}(x)\rangle}(\Pi_{T,
t}(f)\circ\de_{t})(\p_{\te}(x))\mu(d\te)\nu(dx)\\
=\int_{\R^d}&e^{it\langle v, x\rangle}(\Pi_{T, t}(f)\circ\de_{t})(x)\nu(dx)=\nu\left(e^{it\langle
v, \cdot\rangle}(\Pi_{T, t}(f)\circ\de_{t})\right).
\end{align*}
and on the other,
\begin{align*}
\nu\left(P_{t, v}\left(\Pi_{T, t}(f)\circ\de_{t}\right)\right)=k_v(t)\nu(\Pi_{T,
t}(f)\circ\de_{t}).
\end{align*}
Hence,
\begin{align}\label{Tf1}
k_v(t)\nu(\Pi_{T, t}(f)\circ\de_{t})=\nu\left(e^{it\langle v, \cdot\rangle}(\Pi_{T,
t}(f)\circ\de_{t})\right).
\end{align}
Finally, subtracting $\nu(\Pi_{T, t}(f)\circ\de_{t})$ from both
sides of (\ref{Tf1}) we obtain (\ref{Taylorformula}).
\end{proof}
\begin{con}\label{con} Assume that $0<\ep<1$, $\la>0$, $\la+2\ep<\rho=2\la$ and $2\la+\ep<\al$ as in Proposition
\ref{KL} and additionally
\begin{itemize}
\item If $0<\al\leq1$, take any $0<\be<\frac{1}{2}$ such that $\rho+2\be<\al$.
\item If $1<\al\leq2$, take any $\la>0$ such that $\rho=2\la<1$ and $\rho+1<\al$.
\end{itemize}
\end{con}
\begin{prop}\label{dyaprop}
Let $h_v$ be eigenfunction of operator $T_v$ defined in Lemma \ref{EigenvalueT}. If $0<\al<2$, then
\begin{align}\label{dyaA}
\lim_{t\rightarrow0}\frac{1}{|t|^{\al}}\int_{\R^d}\left(e^{it\langle v,
x\rangle}-1\right)\left(\Pi_{T, t}(h_v)(tx)-\Pi_{T, 0}(h_v)(tx)\right)\nu(dx)=0.
\end{align}
If $\al=2$, then
\begin{align}\label{dyaB}
\lim_{t\rightarrow0}\frac{1}{t^2|\log t|}\int_{\R^d}\left(e^{it\langle v,
x\rangle}-1\right)\left(\Pi_{T, t}(h_v)(tx)-\Pi_{T, 0}(h_v)(tx)\right)\nu(dx)=0.
\end{align}
\end{prop}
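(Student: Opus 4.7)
The plan is to leverage the uniform rate estimate (\ref{diffPI2}) from Theorem \ref{diffrencePI}, together with the heavy tail asymptotics of $\nu$ from Theorem \ref{HTt}, and to perform a single splitting of the domain of integration at a threshold of order $1/t$. By the proof of Lemma \ref{EigenvalueT}, $\Pi_{T,0}f = f(0)\cdot h_v$; since $h_v(0)=1$, one has $\Pi_{T,0}h_v = h_v$, so the integrand equals
\begin{align*}
\bigl(e^{it\langle v,x\rangle}-1\bigr)\bigl(\Pi_{T,t}(h_v)(tx)-h_v(tx)\bigr),
\end{align*}
and (\ref{diffPI2}) gives the pointwise bound $|\Pi_{T,t}(h_v)(tx)-h_v(tx)|\le C t^{\de}(1+|x|)^{\rho}$ for every $\de\in(\ep,\al)\cap(0,1]$. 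Condition \ref{con} lets us arrange $\de>\rho$.

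Fix $R>0$ and decompose $\R^d = A_t\cup B_t$ with $A_t=\{|x|\le R/t\}$ and $B_t=\{|x|>R/t\}$. On $B_t$, I use the trivial bound $|e^{it\langle v,x\rangle}-1|\le 2$; the heavy tail (\ref{HTthm66}) in Theorem \ref{HTt} combined with a standard tail integration (since $\rho<\al$) yields
\begin{align*}
\int_{B_t}(1+|x|)^{\rho}\nu(dx)\le C'(R/t)^{\rho-\al}.
\end{align*}
The resulting contribution is of order $t^{\al+\de-\rho}$; since $\de>\rho$, this is $o(t^{\al})$, and \emph{a fortiori} $o(t^2|\log t|)$ when $\al=2$.

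On $A_t$, I use the sharper bound $|e^{it\langle v,x\rangle}-1|\le 2(t|x|)^{\beta}$, with $\beta=1$ when $\al>1$ and with $\beta\in(0,\tfrac{1}{2})$ satisfying $\rho+2\beta<\al$ (as in Condition \ref{con}) when $\al\le 1$. In either case Condition \ref{con} forces $\beta+\rho<\al$, so $\int_{\R^d}|x|^{\beta}(1+|x|)^{\rho}\nu(dx)<\infty$ by the moment estimate of Lemma \ref{bound3} applied to $\nu$. The contribution of $A_t$ is therefore at most $Ct^{\de+\beta}$. When $0<\al<2$, Condition \ref{con} yields $\de+\beta>\al$ strictly, hence $o(t^{\al})$; when $\al=2$, one only has $\de+\beta\le 2$, but $O(t^2)=o(t^2|\log t|)$ still suffices.

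The main obstacle is precisely this endpoint $\al=2$: since both $\de\le 1$ and $\beta\le 1$ are forced by construction, strict inequality $\de+\beta>\al$ fails in the inner region, so the logarithmic cushion in the normalization must absorb the loss. This is exactly why the correct normalization at $\al=2$ is $(t^2|\log t|)^{-1}$ rather than $t^{-2}$. With the parameters from Condition \ref{con} tuned as above, both pieces of the split integral are controlled and the argument otherwise parallels the corresponding step in \cite{BDG}, with $h_v$ playing the role of $\widehat\eta_v$ there.
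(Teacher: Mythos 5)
Your proof is correct, and its core is the same as the paper's: the whole argument rests on the pointwise rate $|\Pi_{T,t}(h_v)(tx)-\Pi_{T,0}(h_v)(tx)|\le Ct^{\de}(1+|x|)^{\rho}$ from (\ref{diffPI2}), combined with the H\"older bound $|e^{i\langle tv,x\rangle}-1|\le 2(t|x|)^{\eta}$ and the moment bound $\int|x|^{\eta}(1+|x|)^{\rho}\nu(dx)<\8$ (valid since $\eta+\rho<\al$), with the parameters of Condition \ref{con} arranged so that $\de+\eta>\al$ for $\al<2$ and $\de+\eta=2$ absorbed by the $|\log t|$ cushion at $\al=2$ — exactly your inner-region computation. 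The one genuine difference is that the paper applies this single estimate globally and never splits the domain: because $\eta+\rho<\al$, the integral over all of $\R^d$ already converges, so your decomposition at $|x|=R/t$ and the outer-region argument via the tail asymptotics (\ref{HTthm66}) are redundant (though correct as stated, since $\de>\rho$ can indeed be arranged). The only place where you are slightly loose is the assertion that Condition \ref{con} ``yields $\de+\beta>\al$'' for $0<\al\le 1$: Condition \ref{con} fixes $\beta$ but not $\de$, so one must explicitly choose $\de$ close enough to $\al$ (the paper takes $\de=\al-\be$ with H\"older exponent $\eta=2\be$, giving $\de+\eta=\al+\be$); with your normalization $\eta=\beta$ you need $\de\in(\al-\beta,\al)$, which exists but should be recorded as a choice rather than a consequence.
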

\begin{proof}
In estimations below in view of Condition \ref{con} we have to use appropriate parameters $\ep,
\la, \rho, \de$ and $\eta$ which are determined by $\alpha$.
\begin{itemize}
\item If $0<\al\leq1$, we take  $\de=\al-\be>\rho+\be>\ep$ and $\eta=2\be$.
\item If $1<\al\leq2$, we take $\de=1>\ep$ and $\eta=1$.
\end{itemize}
In view of  (\ref{ineqexp1}) and (\ref{diffPI2}), then for every
$0< t\leq t_0\leq1$
\begin{align}
\label{dyaa}\left|\frac{1}{t^{\al}}\int_{\R^d}\left(e^{it\langle v, x\rangle}-1\right)\left(\Pi_{T,
t}(h_v)(tx)-\Pi_{T, 0}(h_v)(tx)\right)\nu(dx)\right|\leq&\\
\nonumber\leq C_1|t|^{\eta+\de-\al}\int_{\R^d}|x|^{\eta}(1+|x|)^{\rho}\nu(dx)\leq&
C_2|t|^{\eta+\de-\al}.
\end{align}
Notice that, if
\begin{itemize}
\item $0<\al\leq1$, then $\eta+\de-\al=2\be+\al-\be-\al=\be>0$ and $\rho+\eta=\rho+2\be<\al$.
\item $1<\al\leq2$, then $\eta+\de-\al=1+1-\al=2-\al\geq0$ and $\rho+\eta=\rho+1<\al$.
\end{itemize}
This justifies inequalities in (\ref{dyaa}) and completes the
proof of (\ref{dyaA}) and (\ref{dyaB}).
\end{proof}
\begin{prop}\label{rate}
For every $x\in\R^d$, $t>0$ and $v\in\Ss^{d-1}$ we have $h_v(tx)=h_{tv}(x)$ where $h_v$ is
eigenfunction defined in Lemma \ref{EigenvalueT}. Moreover for $0<\de\leq1$ and $\al>\de$ we have
\begin{align}\label{ineqE1}
\left|\E\left(e^{i\left\langle v, \sum_{k=1}^{\8}\bp_{k}\circ\ldots\circ
\bp_{1}(x)\right\rangle}-1\right)\right|\leq\frac{2}{1-\ka(\de)}|x|^{\de},
\end{align}
and for $0<\de\leq1$ and $\al>1+\de$
\begin{align}\label{ineqE2}
&\left|\E\left(e^{i\left\langle v, \sum_{k=1}^{\8}\bp_{k}\circ\ldots\circ
\bp_{1}(x)\right\rangle}-1-i\left\langle v, \sum_{k=1}^{\8}\bp_{k}\circ\ldots\circ
\bp_{1}(x)\right\rangle\right)\right|\\
\nonumber&\leq\frac{2}{\left(1-\ka^{\frac{1}{1+\de}}(1+\de)\right)^{1+\de}}|x|^{1+\de}.
\end{align}
\end{prop}
\begin{proof}
In order to prove the first formula it is enough to show that for
a fixed $s>0$ $\bp(sx)=s\bp(x)$ for any $x\in\R^d$. For every
$\eps>0$ there exist $\de>0$ such that
$\left|t\p(t^{-1}sx)-\bp(sx)\right|<\eps$ for every $0<t<s\de$.
Hence if $t=rs$ and $0<r<\de$ then
$\left|sr\p(r^{-1}x)-\bp(sx)\right|<\eps$. Letting $r$ tend to $0$
we obtain %On one side, this means that
%$\lim_{r\rightarrow0}sr\p(r^{-1}x)=\bp(sx)$ and on the other,
 $s\bp(x)=\bp(sx)$. Hence,
\begin{align*}
h_v(tx)=\E\left(e^{i\left\langle v, \sum_{k=1}^{\8}\bp_{k}\circ\ldots\circ
\bp_{1}(tx)\right\rangle}\right)=\E\left(e^{i\left\langle tv,
\sum_{k=1}^{\8}\bp_{k}\circ\ldots\circ \bp_{1}(x)\right\rangle}\right)=h_{tv}(x).
\end{align*}
Now by (\ref{ineqexp1}) we have
\begin{align*}
\left|\E\left(e^{i\left\langle v, \sum_{k=1}^{\8}\bp_{k}\circ\ldots\circ
\bp_{1}(x)\right\rangle}-1\right)\right|&\leq2\E\left(\left|\sum_{k=1}^{\8}\bp_{k}\circ\ldots\circ
\bp_{1}(x)\right|^{\de}\right)\\
&\leq2\sum_{k=1}^{\8}\ka^k(\de)|x|^{\de}\leq\frac{2}{1-\ka(\de)}|x|^{\de}.
\end{align*}
Finally, by (\ref{ineqexp2}) and $\al>1+\de$ we have
\begin{align*}
&\left|\E\left(e^{i\left\langle v, \sum_{k=1}^{\8}\bp_{k}\circ\ldots\circ
\bp_{1}(x)\right\rangle}-1-i\left\langle v, \sum_{k=1}^{\8}\bp_{k}\circ\ldots\circ
\bp_{1}(x)\right\rangle\right)\right|\\
&\leq2\E\left(\left|\sum_{k=1}^{\8}\bp_{k}\circ\ldots\circ
\bp_{1}(x)\right|^{1+\de}\right)^{\frac{1+\de}{1+\de}}\leq2\left(\sum_{k=1}^{\8}\E\left(\left|\bp_{k}\circ\ldots\circ
\bp_{1}(x)\right|^{1+\de}\right)^\frac{1}{1+\de}\right)^{1+\de}\\
&\leq2\left(\sum_{k=1}^{\8}\ka^{\frac{k}{1+\de}}(1+\de)\right)^{1+\de}|x|^{1+\de}\leq\frac{2}{\left(1-\ka^{\frac{1}{1+\de}}(1+\de)\right)^{1+\de}}|x|^{1+\de}.
\end{align*}
\end{proof}

\begin{lem}\label{ratelemma}
Let $h_v$ be eigenfunction of operator $T_v$ defined in Lemma \ref{EigenvalueT}. If $0<\de \leq 1$
such that $0<\de< \al$, then
\begin{align}\label{rate1}
\nu(\Pi_{T, t}(h_v)\circ\de_{t}-1)\leq D|t|^{\de}.
\end{align}
for some constant $D>0$.
\end{lem}
\begin{proof} Observe that for $\de$ as above by Proposition
\ref{rate}, (\ref{diffPI2}) and (\ref{ineqE1}) for every $0<t\leq t_0\leq1$ we have
\begin{align*}
\left|\Pi_{T, t}(h_v)(tx)-1\right|&\leq\left|\Pi_{T, t}(h_v)(tx)-\Pi_{T,
0}(h_v)(tx)\right|+\left|\Pi_{T,
0}(h_v)(tx)-1\right|\\
&\leq |t|^{\de}\left(C(1+|x|)^{\rho}+\frac{2}{1-\ka(\de)}|x|^{\de}\right).
\end{align*}
Above and the Lebesgue dominated convergence theorem imply
(\ref{rate1}).
\end{proof}
\section{Proof of the limit theorem for Birkhoff sums.}
The purpose of this section is to give a proof of the limit Theorem \ref{limitthm} for Birkhoff
sums.  We will see that the behavior of $S_n^x=\sum_{k=0}^{n}X_{k}^{x}$ is strongly related to the
asymptotics of the stationary measure $\nu$ at infinity. Before we prove convergence of underlying
characteristic functions we write fractional expansions of the eigenvalues $k_v(t)$ when $t$ goes
to $0$. Later on in view of Levy--Cramer theorem it is enough to justify that the characteristic
functions converge pointwise to a continuous function at zero. We shall use radial coordinates in
$\R^d$ i.e. every point can be expressed as $tv$ where $t>0$ and $v\in\Ss^{d-1}$.

The lemmas are analogues of those in \cite{BDG} and the proofs follow the scheme there with the
function $h_v$ playing the role of $\hat \eta _v$ there. Therefore, we have omitted as much as we
could. However, some arguments here contain slight modifications or are just simpler since the
group $G$ in \cite{BDG} is more general than $\R ^+\times K$. Therefore, we include some proofs
here for reader's convenience.
\begin{proof}[Proof of Theorem \ref{limitthm}]
It is a direct consequence of Lemma \ref{chcaseto1lem} (Case $0<\al<1$), Lemma \ref{chcase1A} (Case
$\al=1$), Lemma \ref{chcase1to2lem} (Case $1<\al<2$), Lemma \ref{chcase2lem} (Case $\al=2$) and
Theorem \ref{nondegC} (nondegeneracy of the limit variable $C_{\al}(v)$ for $\al\in(0, 2]$ and
$v\in\Ss^{d-1}$).
\end{proof}

\subsection{Case $0<\al<1$}
\begin{lem}\label{caseto1lem}
Assume that $0<\al<1$. Then for every $v\in\Ss^{d-1}$
\begin{align}\label{caseto1}
\lim_{t\rightarrow0}\frac{k_v(t)-1}{|t|^{\al}}=C_{\al}(v).
\end{align}
where
\begin{align*}
C_{\al}(v)=\int_{\R^d}\left(e^{i\langle v, x\rangle}-1\right)h_v(x)\La(dx).
\end{align*}
Moreover, $C_{\al}(tv)=t^{\al}C_{\al}(v)$ for $t>0$.
\end{lem}
\begin{lem}\label{chcaseto1lem}
Assume that $\al<1$.  Let $\Delta_{\al}^n$ be the characteristic
function of the random variable $n^{-\frac{1}{\al}}S_n^x$ and let
$t_n=tn^{-\frac{1}{\al}}$ for $n\in\N$, then
\begin{align}\label{chcaseto1}
\lim_{n\rightarrow\8}\Delta_{\al}^n(tv)=\ups_{\al}(tv),
\end{align}
where $\ups_{\al}(tv)=e^{t^{\al}C_{\al}(v)}$.
\end{lem}
\begin{proof}[Proof of Lemmas (\ref{caseto1lem}) and (\ref{chcaseto1lem})]
For the proof we refer to \cite{BDG}. See also the proof of Lemmas \ref{chcase1A}, \ref{case2lem}
and \ref{chcase2lem}.
\end{proof}
\subsection{Case $\al=1$}
\begin{lem}
Assume that $\al=1$ and $\xi(t)=\int_{\R^d}\frac{tx}{1+|tx|^2}\nu(dx)$. Then for every
$v\in\Ss^{d-1}$
\begin{align}
\label{case1}\lim_{t\rightarrow0}\frac{k_v(t)-1-i\langle v, \xi(t)\rangle}{|t|}=C_{1}(v),
\end{align}
where
\begin{align*}
C_{1}(v)=\int_{\R^d}\left(\left(e^{i\langle v, x\rangle}-1\right)h_v(x)-\frac{i\langle v,
x\rangle}{1+|x|^2}\right)\La(dx).
\end{align*}
\end{lem}
\begin{lem}\label{chcase1A}
Assume that $\al=1$.  Let $\Delta_{1}^n$ be the characteristic function of the random variable
$n^{-1}S_n^x-n\xi(n^{-1})$ and define $t_n=tn^{-1}$ for $n\in\N$, then
\begin{align}\label{chcase1}
\lim_{n\rightarrow\8}\Delta_{1}^n(tv)=\ups_{1}(tv),
\end{align}
where $\ups_{1}(tv)=e^{tC_{1}(v)+it\langle v, \tau(t)\rangle}$ and
$\tau(t)=\int_{\R^d}\left(\frac{x}{1+|tx|^2}-\frac{x}{1+|x|^2}\right)\La(dx)$.
\end{lem}
\begin{proof}
In order to prove (\ref{chcase1}) notice that by Lemma \ref{KL0} and Proposition \ref{KL} we have
\begin{align*}
\Delta_{\al}^n(tv)&=\E\left(e^{it_n\langle v, S_n^x-n^2\xi(n^{-1})\rangle}\right)=e^{-itn\langle v,
\xi(n^{-1})\rangle}\E\left(e^{it_n\langle v, S_n^x\rangle}\right)\\
&=e^{-itn\langle v, \xi(n^{-1})\rangle}\cdot\left(k^n_v(t_n)\left(\Pi_{P,
t_n}(1)\right)(x)+\left(Q^n_{P, t_n}(1)\right)(x)\right).
\end{align*}
Proposition \ref{KL} ensures that $\lim_{n\rightarrow\8}\|Q^n_{P, t_n}\|_{\bl1}=0$, because
$\|Q_{P, t}\|_{\bl1}<1$. Observe that
\begin{align*}
\lim_{n\rightarrow\8}&e^{-int\langle v,
\xi(n^{-1})\rangle}k_v^n(t_n)=\\
=&\lim_{n\rightarrow\8}\left(1+e^{-it\langle v,
\xi(n^{-1})\rangle}k_v(t_n)-1\right)^{\frac{n}{e^{-it\langle v,
\xi(n^{-1})\rangle}k_v(t_n)-1}\cdot\left(e^{-it\langle v, \xi(n^{-1})\rangle}k_v(t_n)-1\right)}\\
=&\lim_{n\rightarrow\8}e^{n\left(e^{-it\langle v,
\xi(n^{-1})\rangle}k_v(t_n)-1\right)}=e^{tC_{1}(v)+it\langle v, \tau(t)\rangle}.
\end{align*}
Indeed,
\begin{align}
\label{chcase1d}&\lim_{n\rightarrow\8}\left(n\left(e^{-it\langle v,
\xi(n^{-1})\rangle}k_v(t_n)-1\right)\right)=\\
\nonumber&=\lim_{n\rightarrow\8}\left(te^{-it\langle v, \xi(n^{-1})\rangle}\cdot\frac{k_v(t_n)-1-i\langle v, \xi(t_n)\rangle}{t_n}\right)\\
\nonumber&+\lim_{n\rightarrow\8}\left(ne^{-it\langle v, \xi(n^{-1})\rangle}\left(1+i\langle v, \xi(t_n)\rangle\right)-n\right)=tC_{1}(v)+\\
\nonumber&+\lim_{n\rightarrow\8}\left(n\cdot\left(1-it\langle v, \xi(n^{-1})\rangle+O\left(t^2\langle v, \xi(n^{-1})\rangle^2\right)\right)\cdot\left(1+i\langle v, \xi(t_n)\rangle\right)-n\right)\\
\nonumber&=tC_{1}(v)+\lim_{n\rightarrow\8}\left(in\langle v, \xi(t_n)\rangle-int\langle v,
\xi(n^{-1})\rangle\right)\\
\label{chcase1e}&+\lim_{n\rightarrow\8}\left(nt\langle v, \xi(t_n)\rangle\cdot\langle v,
\xi(n^{-1})\rangle+O\left(t^2\langle v, \xi(n^{-1})\rangle^2\right)\cdot\left(1+i\langle v,
\xi(t_n)\rangle\right)\right),
\end{align}
Notice that the limit in (\ref{chcase1e}) is equal to $0$ by Lemma \ref{chcase1A}. By
(\ref{chcase1a}) we have
\begin{align*}
\lim_{n\rightarrow\8}&\left(in\langle v, \xi(t_n)\rangle-int\langle v,
\xi(n^{-1})\rangle\right)=\\
&=\lim_{n\rightarrow\8}int\cdot\int_{\R^d} \left(\frac{\langle v,
n^{-1}x\rangle}{1+\left|n^{-1}tx\right|^2}-\frac{\langle v,
n^{-1}x\rangle}{1+\left|n^{-1}x\right|^2}\right)\nu(dx)=it\langle v, \tau(t)\rangle,
\end{align*}
hence the limit in (\ref{chcase1d}) is equal to $tC_1(v)+it\langle
v, \tau(t)\rangle$ and the (\ref{chcase1}) follows. Finally to
prove continuity of $\ups_1$ at zero, it is enough to observe that
for $|x|<1$,
\begin{align*}
\left|\left(e^{i\langle v, x\rangle}-1\right)h_v(x)-\frac{i\langle v, x\rangle}{1+|x|^2}\right|\leq
C|x|^{1+\de},
\end{align*}
for any $0<\de<1$ and some $C>0$ independent of $v\in\Ss^{d-1}$. This completes the proof of the
Lemma.
\end{proof}
\begin{lem}For every $t\in\R$ and $v\in\Ss^{d-1}$
\begin{align}
\label{chcase1a}\lim_{s\rightarrow0}\frac{1}{s}\int_{\R^d}\left(\frac{\langle v,
stx\rangle}{1+|stx|^2}-\frac{\langle v, stx\rangle}{1+|sx|^2}\right)\nu(dx)=t\langle v,
\tau(t)\rangle,
\end{align}
where
\begin{align*}
\tau(t)=\int_{\R^d}\left(\frac{x}{1+|tx|^2}-\frac{x}{1+|x|^2}\right)\La(dx).
\end{align*}
Moreover, there exists a constant $C>0$  such that
\begin{align}
\label{chcase1b}|t\langle v, \tau(t)\rangle|\leq \left\{
\begin{array}{ll}
                                            C|t||\log t|,      & \mbox{for $|t|<\frac{1}{2}$}\\
                                            C|t|,         & \mbox{for $|t|\geq\frac{1}{2}$}\\
                                            \end{array} \right..
\end{align}
\end{lem}
\begin{proof}
For the proof we refer to \cite{BDG}.
\end{proof}
\subsection{Case $1<\al<2$}
\begin{lem}\label{case1to2lem}
Assume that $1<\al<2$ and $m=\int_{\R^d}x\nu(dx)$. Then for every $v\in\Ss^{d-1}$
\begin{align}
\label{case1to2}\lim_{t\rightarrow0}\frac{k_v(t)-1-i\langle v, tm\rangle}{|t|^{\al}}=C_{\al}(v),
\end{align}
where
\begin{align*}
C_{\al}(v)=\int_{\R^d}\left(\left(e^{i\langle v, x\rangle}-1\right)h_v(x)-i\langle v,
x\rangle\right)\La(dx).
\end{align*}
Moreover, $C_{\al}(tv)=t^{\al}C_{\al}(v)$ for $t>0$.
\end{lem}
\begin{lem}\label{chcase1to2lem}
Assume that $1<\al<2$.  Let $\Delta_{\al}^n$ be the characteristic function of the random variable
$n^{-\frac{1}{\al}}\left(S_n^x-nm\right)$ and define $t_n=tn^{-\frac{1}{\al}}$ for $n\in\N$, then
\begin{align}\label{chcase1to2}
\lim_{n\rightarrow\8}\Delta_{\al}^n(tv)=\ups_{\al}(tv),
\end{align}
where $\ups_{\al}(tv)=e^{t^{\al}C_{\al}(v)}$.
\end{lem}
\begin{proof}[Proof of Lemma (\ref{case1to2lem}) and (\ref{chcase1to2lem})]
For the proof we refer to \cite{BDG}. See also the proof of Lemmas \ref{chcase1A}, \ref{case2lem}
and \ref{chcase2lem}.
\end{proof}
\subsection{Case $\al=2$}
\begin{lem}\label{case2lem}
Assume that $\al=2$ and $m=\int_{\R^d}x\nu(dx)$. Then for every $v\in\Ss^{d-1}$
\begin{align}
\label{case2}\lim_{t\rightarrow0}\frac{k_v(t)-1-i\langle v, tm\rangle}{t^{2}|\log t|}=2C_{2}(v),
\end{align}
where
\begin{align*}
C_{2}(v)=-\frac{1}{4}\int_{\Ss^{d-1}}\left(\langle v, w\rangle^2+2\langle v, w\rangle\langle v,
\E(\vp(w))\rangle\right)\si_{\La}(dw),
\end{align*}
and $\vp(x)=\sum_{k=1}^{\8}\bp_{k}\circ\ldots\circ \bp_{1}(x)$. Moreover, $C_{2}(tv)=t^{2}C_{2}(v)$
for $t>0$.
\end{lem}
\begin{proof} Let us write,
\begin{align*}
\int_{\R^d}\left(e^{it\langle v, x\rangle}-1\right)&\Pi_{T,
t}(h_v)(tx)\nu(dx)=\\
&=\int_{\R^d}\left(e^{it\langle v, x\rangle}-1\right)\cdot\left(\Pi_{T,
t}(h_v)(tx)-\Pi_{T, 0}(h_v)(tx)\right)\nu(dx)\\
&+\int_{\R^d}\left(e^{it\langle v, x\rangle}-1\right)\cdot\E\left(e^{it\langle v, \vp(x)\rangle}-1-it{\langle v, \vp(x)\rangle}\right)\nu(dx)\\
&+\int_{\R^d}\left(e^{it\langle v, x\rangle}-1\right)\cdot\E\left(it{\langle v, \vp(x)\rangle}\right)\nu(dx)\\
&+\int_{\R^d}\left(e^{it\langle v, x\rangle}-1-it\langle v, x\rangle\right)\nu(dx)+it\langle v,
m\rangle
\end{align*}
where $\vp(x)=\sum_{k=1}^{\8}\bp_{k}\circ\ldots\circ \bp_{1}(x)$. In view of Lemma \ref{dyaB} the
first term divided by $t^2|\log t|$ goes to $0$. By inequalities (\ref{ineqexp1}) and
(\ref{ineqE2}) it is easy to see that the function $f_v(x)=\left(e^{i\langle v,
x\rangle}-1\right)\cdot\E\left(e^{i\langle v, \vp(x)\rangle}-1-i{\langle v,
\vp(x)\rangle}\right)\in\F$. Hence by (\ref{HTthm1}) the second one divided by $t^2$ has a finite
limit. So divided by $t^2|\log t|$ goes to $0$. To handle with the third and fourth expression we
will use Lemma \ref{case2a}. Notice, that
\begin{align*}
&\lim_{x\rightarrow0}\frac{\left(e^{i\langle v, x\rangle}-1\right)\cdot\E\left(i{\langle v,
\vp(x)\rangle}\right)}{\langle v, x\rangle\langle v,
\E(\vp(x))\rangle}=-1,\\
&\lim_{x\rightarrow0}\frac{e^{i\langle v, x\rangle}-1-i\langle v, x\rangle}{\langle v,
x\rangle^2}=-\frac{1}{2}.
\end{align*}
All the assumptions of Lemma \ref{case2a} are satisfied, thus
\begin{align*}
\lim_{t\rightarrow\8}\frac{1}{t^2|\log t|}\int_{\R^d}\left(e^{it\langle v,
x\rangle}-1\right)&\cdot\E\left(it{\langle v,
\vp(x)\rangle}\right)\nu(dx)=\\
=&-\int_{\Ss^{d-1}}\langle v, w\rangle\langle v, \E(\vp(w))\rangle\si_{\La}(dw),
\end{align*}
and
\begin{align*}
\lim_{t\rightarrow\8}\frac{1}{t^2|\log t|}\int_{\R^d}\left(e^{it\langle v, x\rangle}-1-it\langle v,
x\rangle\right)\nu(dx)=-\frac{1}{2}\int_{\Ss^{d-1}}\langle v, w\rangle^2\si_{\La}(dw),
\end{align*}
hence,
\begin{align}
\label{case2b}\lim_{t\rightarrow\8}\frac{1}{t^2|\log t|}\left(\int_{\R^d}\left(e^{it\langle v,
x\rangle}-1\right)\Pi_{T, t}(h)(tx)\nu(dx)-i\langle v, tm\rangle\right)=2C_2(v).
\end{align}
Applying formula (\ref{Taylorformula}), (\ref{case2b}) and Lemma \ref{ratelemma}, we write
\begin{align*}
\lim_{t\rightarrow0}&\frac{k_v(t)-1-i\langle v, tm\rangle}{t^{2}|\log
t|}\\
&=\lim_{t\rightarrow0}\frac{\left(\nu\left((e^{it\langle v, \cdot\rangle}-1)\cdot(\Pi_{T,
t}(h_v)\circ\de_{t})\right)-i\langle v, tm\rangle\nu(\Pi_{T,
t}(h_v)\circ\de_{t})\right)}{\nu(\Pi_{T, t}(h_v)\circ\de_{t})t^{2}|\log
t|}\\
&=\lim_{t\rightarrow0}\left(\frac{\nu\left((e^{it\langle v, \cdot\rangle}-1)\cdot(\Pi_{T,
t}(h_v)\circ\de_{t})\right)-i\langle v, tm\rangle}{\nu(\Pi_{T, t}(h_v)\circ\de_{t})t^{2}|\log
t|}\right)\\
&+\lim_{t\rightarrow0}\left(\frac{i\left(1-\nu(\Pi_{T, t}(h_v)\circ\de_{t})\right)\langle v,
tm\rangle}{\nu(\Pi_{T, t}(h_v)\circ\de_{t})t^{2}|\log t|}\right)=2C_2(v).
\end{align*}
This completes the proof of (\ref{case2}).
\end{proof}
\begin{lem}\label{chcase2lem}
Assume that $\al=2$.  Let $\Delta_{2}^n$ be the characteristic function of the random variable
$\left(n\log n\right)^{-\frac{1}{2}}\left(S_n^x-nm\right)$ and define $t_n=t\left(n\log
n\right)^{-\frac{1}{2}}$ for $n\in\N$, then
\begin{align}\label{chcase2}
\lim_{n\rightarrow\8}\Delta_{2}^n(tv)=\ups_{2}(tv),
\end{align}
where $\ups_{2}(tv)=e^{t^2C_{2}(v)}$.
\end{lem}
\begin{proof}
In order to prove (\ref{chcase2}) notice that
\begin{align*}
\Delta_{2}^n(tv)&=\E\left(e^{it_n\langle v, S_n^x-nm\rangle}\right)=e^{-int_n\langle v,
m\rangle}\E\left(e^{it_n\langle v, S_n^x\rangle}\right)\\
&=e^{-int_n\langle v, m\rangle}\cdot\left(k^n_v(t_n)\left(\Pi_{P, t_n}(1)\right)(x)+\left(Q^n_{P,
t_n}(1)\right)(x)\right).
\end{align*}
By similar argument as in the previous cases
\begin{align*}
\lim_{n\rightarrow\8}e^{-int_n\langle v, m\rangle}k_v^n(t_n)=e^{t^2C_{2}(v)}.
\end{align*}
Indeed,
\begin{align}
\nonumber\lim_{n\rightarrow\8}&\left(n\left(e^{-it_n\langle v,
m\rangle}k_v(t_n)-1\right)\right)=\\
\nonumber&=\lim_{n\rightarrow\8}\left(nt^2_n|\log t_n|e^{-it_n\langle v,
m\rangle}\cdot\frac{k_v(t_n)-1-it_n\langle v, m\rangle}{t^2_n|\log t_n|}\right)+\\
\nonumber&+\lim_{n\rightarrow\8}\left(ne^{-it_n\langle v, m\rangle}\left(1+it_n\langle v,
m\rangle\right)-n\right)\\
\label{chcase2a}&=\lim_{n\rightarrow\8}nt^2_n|\log t_n|\cdot
2C_{2}(v)\\
\label{chcase2b}&+\lim_{n\rightarrow\8}\left(nt_n^2\langle v,
m\rangle^2+nO\left(t_n^2\right)\cdot\left(1+it_n\langle v, m\rangle\right)\right) =t^{2}C_{2}(v).
\end{align}
Notice that
\begin{align*}
nt^2_n|\log t_n|=\frac{t^2}{\log n}\left|\log t-\frac{1}{2}\left(\log n+\log(\log n)\right)\right|\
_{\overrightarrow{n\rightarrow\8}}\ \frac{t^2}{2}.
\end{align*}
Hence the limit in (\ref{chcase2a}) is equal to $t^{2}C_{2}(v)$ and the second one in
(\ref{chcase2b}) is equal to $0$. Finally to prove continuity of $\ups_{2}$ at zero, we proceed as
in previous cases. It completes the proof of the Lemma.
\end{proof}
\begin{lem}\label{case2a}
Suppose we are given Lipschitz maps $\vp_i:\R^d\mapsto\R^d$ for $i=1, 2$ and functions $f$ and $h$
on $\R^d$, such that
\begin{itemize}
\item $t\vp_i(x)=\vp_i(tx)$ for any $t>0$ and $\vp_i(0)=0$ for $i=1, 2$.
\item $h(x)=\langle \vp_1(x), v_1\rangle\langle \vp_2(x), v_2\rangle$ for some $v_1,
v_2\in\R^d$.
\item $\lim_{x\rightarrow0}\frac{f(x)}{h(x)}=C$ for some $C\in\R$.
\item $|f(x)|\leq D|x|^{1+\eta}$ for some $D>0$ and $0<\eta<1$.
\end{itemize}
Then,
\begin{align*}
\lim_{t\rightarrow0}\frac{1}{t^2|\log
t|}\int_{\R^d}f(tx)\nu(dx)=C\int_{\Ss^{d-1}}h(w)\si_{\La}(dw),
\end{align*}
where $\si_{\La}$ is the measure on the $\Ss^{d-1}$ defined in (\ref{HTthm4}).
\end{lem}
\begin{proof}
Fix $\be>1$ and define the annulus $U=\{x\in\R^d: 1<|x|\leq \be\}$. Next observe that for fixed
$\eps>0$ there exist $\de>0$ such that
\begin{align*}
\left|\frac{f(x)}{h(x)}-C\right|<\eps,
\end{align*}
for any $|x|<\de$. In view of (\ref{HTthm1}) there exists $A>0$ such that for $0<t\leq\frac{1}{A}$
\begin{align}
\label{in1}\left|\frac{1}{t^2}\int_{\R^d}\mathbf{1}_{U}(tx)h(tx)\nu(dx)-\int_{U}h(x)\La(dx)\right|\leq\eps,\\
\label{in2}\left|\frac{1}{t^2}\int_{\R^d}\mathbf{1}_{U}(tx)|h(tx)|\nu(dx)-\int_{U}|h(x)|\La(dx)\right|\leq\eps.
\end{align}
From now, we assume that $0<t<\frac{\de}{A}$ and consider
\begin{align}
\int_{\R^d}f(tx)\nu(dx)=\left(\int_{B_1}+\int_{B_2}+\int_{B_3}\right)f(tx)\nu(dx),
\end{align}
where $B_1=\{x\in\R^d: |x|\leq A\}$, $B_2=\{x\in\R^d: A<|x|\leq\frac{\de}{t}\}$ and
$B_3=\{x\in\R^d: |x|>\frac{\de}{t}\}$. Notice first, that
\begin{align*}
\left|\int_{B_1}f(tx)\nu(dx)\right|\leq(|C|+\eps)\int_{B_1}|h(tx)|\nu(dx)\leq(|C|+\eps)t^2L_{\vp_1}L_{\vp_2}A^2|v_1||v_2|,
\end{align*}
and by (\ref{HTthm1})
\begin{align*}
\lim_{t\rightarrow0}\frac{1}{t^2}\int_{B_3}f(tx)\nu(dx)=\int_{\{x\in\R^d: |x|>\de\}}f(x)\La(dx).
\end{align*}
Hence
\begin{align*}
\lim_{t\rightarrow0}\frac{1}{t^2|\log
t|}\left(\int_{B_1}f(tx)\nu(dx)+\int_{B_3}f(tx)\nu(dx)\right)=0.
\end{align*}
We will prove that
\begin{align}\label{f1}
\lim_{t\rightarrow0}\frac{1}{t^2|\log
t|}\int_{B_2}f(tx)\nu(dx)=\frac{C}{\log\be}\int_{U}h(x)\La(dx).
\end{align}
In view of our assumptions
\begin{align}\label{in3}
&\left|\frac{1}{t^2|\log t|}\int_{B_2}f(tx)\nu(dx)-\frac{C}{\log\be}\int_{U}h(x)\La(dx)\right|\\
\nonumber &\leq |C|\left|\frac{1}{t^2|\log t|}\int_{B_2}h(tx)\nu(dx)-\frac{1}{\log\be}\int_{U}h(x)\La(dx)\right|\\
\nonumber &+\frac{\eps}{t^2|\log t|}\int_{B_2}|h(tx)|\nu(dx).
\end{align}
We estimate the first expression. For this purpose define
$K=\lfloor\log_{\be}\frac{\de}{At}\rfloor-1$. Let $t_n=A\be^n$ and annulus $U_n=\{x\in\R^d:
A\be^n<|x|\leq A\be^{n+1}\}=t_nU$. Notice that $t_n>A$, therefore applying (\ref{in1}) and
(\ref{in2}) we obtain
\begin{align*}
&\left|\frac{1}{t^2|\log t|}\int_{B_2}h(tx)\nu(dx)-\frac{1}{\log\be}\int_{U}h(x)\La(dx)\right|\\
&\leq\left|\frac{1}{|\log t|}\sum_{n=0}^{K}t_n^2\int_{\R^d}\mathbf{1}_{U}(t_n^{-1}x)h(t_n^{-1}x)\nu(dx)-\frac{1}{\log\be}\int_{U}h(x)\La(dx)\right|\\
&+\frac{(t_{K+1})^2}{|\log t|}\int_{\R^d}\mathbf{1}_{U}(t_{K+1}^{-1}x)|h(t_{K+1}^{-1}x)|\nu(dx)\\
&\leq\left|\left(\frac{K+1}{|\log
t|}-\frac{1}{\log\be}\right)\cdot\int_{U}h(x)\La(dx)\right|+\frac{\eps(K+1)}{|\log t|}\\
&+\frac{1}{|\log t|}\cdot\left(\int_{U}|h(x)|\La(dx)+\eps\right)\\
\end{align*}
Exactly the same arguments as above allows us to estimate the second term in (\ref{in3}). Thus, we
obtain
\begin{align*}
\frac{\eps}{t^2|\log t|}\int_{B_2}|h(tx)|\nu(dx)\leq\frac{\eps(K+2)}{|\log
t|}\cdot\left(\int_{U}|h(x)|\La(dx)+\eps\right).
\end{align*}
Therefore, passing to the limit in (\ref{in3}) we have
\begin{align*}
&\limsup_{t\rightarrow0}\left|\frac{1}{t^2|\log t|}\int_{B_2}f(tx)\nu(dx)-\frac{C}{\log\be}\int_{U}h(x)\La(dx)\right|\\
&\leq \frac{\eps}{\log\be}\left(1+\eps+\int_{U}|h(x)|\La(dx)\right).
\end{align*}
Since $\eps$ can be arbitrary small we obtain (\ref{f1}). Now we use polar decomposition for
measure $\La$ to complete the proof. Indeed,
\begin{align*}
\frac{1}{\log\be}\int_{U}h(x)\La(x)&=\frac{1}{\log\be}\int_{1}^{\be}\int_{\Ss^{d-1}}h(rw)\si_{\La}(dw)\frac{dr}{r^3}\\
&=\frac{1}{\log\be}\int_{1}^{\be}\int_{\Ss^{d-1}}h(w)\si_{\La}(dw)\frac{dr}{r}=\int_{\Ss^{d-1}}h(w)\si_{\La}(dw).
\end{align*}
\end{proof}
\subsection{Nondegeneracy of the limit law}
To state a nondegeneracy of the limit variable we need also the following family of affine
recursions
\begin{align}\label{recW}
\nonumber &W_0=0,\\
&W_{n+1}=M_{n+1}^{\ast}(W_n+v).
\end{align}
and their stationary solutions with law $\eta_v$
\begin{align*}
W=\sum_{n=1}^{\8}M_{1}^{\ast}M_{2}^{\ast}\cdot\ldots\cdot M_{n}^{\ast}v.
\end{align*}

In this part we will show following
\begin{thm}\label{nondegC}
Assume that $\La\not=0$, $\vp(x)=\sum_{k=1}^{\8}M_{k}\cdot\ldots\cdot M_{1}x$ for every
$x\in\supp\La$ and additionally that there are $w_1,\ldots, w_d\in\supp\si_{\La}$ such that span
$\R^d$. If $\Re C_{\al}(v)=0$ for $0<\al\leq2$, then $\E\left(|Wv+v|^{\al}-|Wv|^{\al}\right)=0$.
\end{thm}
Notice that with this notation in view of Theorem \ref{nondegC}
\begin{align*}
h_v(x)=\E\left(e^{i\langle v,\vp (x)\rangle }\right)=\int_{\R^d} e^{i\langle y, x\rangle } \eta
_v(dy).
\end{align*}
Notice that, for $0<\al<2$ we have
\begin{align*}
\Re C_{\al}(v)=&\Re\left(\int_{\R^d}\int_{\R ^d}(e^{i\langle v,x\rangle
}-1)e^{i\langle y,x\rangle} \eta_{v}(dy)\Lambda(dx)\right)\\
=&\Re\left(\int_{\R^d}\int_{0}^{\8}\int_{\Ss^{d-1}}\left(e^{i\langle y+v, tw\rangle } - e^{i\langle
v, tw\rangle}\right)
\si_{\La}(dw)\frac{dt}{t^{\al+1}}\eta_v(dy)\right)\\
=&\int_{\R^d}\int_{0}^{\8}\int_{\Ss^{d-1}}\left(\cos\left(t\langle y+v,
w\rangle\right)-\cos\left(t\langle y,
w\rangle\right)\right)\si_{\La}(dw)\frac{dt}{t^{\al+1}}\eta_v(dy),
\end{align*}
and
\begin{align*}
\Re C_{\al}(v)&=C(\al)\cdot\int_{\R^d}\int_{\Ss^{d-1}}\left(\left|\langle y+v,
w\rangle\right|^{\al}-\left|\langle
y, w\rangle\right|^{\al}\right)\si_{\La}(dw)\eta_v(dy)\\
&=C(\al)\cdot\int_{\Ss^{d-1}}\E\left(\left|\langle Wv+v, w\rangle\right|^{\al}-\left|\langle Wv,
w\rangle\right|^{\al}\right)\si_{\La}(dw),
\end{align*}
where
\begin{align*}
C(\al)=\int_{0}^{\8}\frac{\cos t-1}{t^{\al+1}}dt<0.
\end{align*}
Indeed,  for $x\in\R$
\begin{align*}
\int_{0}^{\8}\frac{\cos(xt)-1}{t^{\al+1}}dt=|x|^{\al}\int_{0}^{\8}\frac{\cos t-1}{t^{\al+1}}dt.
\end{align*}
Notice also that for $\al=2$
\begin{align*}
C_{2}(v)&=-\frac{1}{4}\int_{\Ss^{d-1}}\E\left(\left|\langle Wv+v, w\rangle\right|^{2}-\left|\langle
Wv, w\rangle\right|^{2}\right)\si_{\La}(dw).
\end{align*}
It is easy to see that $M\in G$ satisfies assumption of Theorem \ref{HTthm} and recursion
(\ref{recW}) has no fixed points. Then $\E\left(|Wv+v|^{\al}-|Wv|^{\al}\right)>0$ by Theorem
\ref{HTthm}. Hence under the assumptions of Theorem \ref{nondegC} in view of above $\Re
C_{\al}(v)<0$ for every $\al\in(0, 2]$ and $v\in\Ss^{d-1}$.

To prove Theorem \ref{nondegC} we need two Lemmas.
\begin{lem}\label{linearlem}
If $x_1, \ldots, x_d\in\R^d$ are linearly independent, then there exists $\eps>0$ such that for any
$y_1\in B_{\eps}(x_1),\ldots, y_d\in B_{\eps}(x_d)$, the set $\{y_1,\ldots y_d\}$ is linearly
independent.
\end{lem}
\begin{proof}Suppose for a contradiction that for every $\eps>0$ there exist $y_1\in B_{\eps}(x_1),\ldots,$
$y_d\in B_{\eps}(x_d)$ such that $y_1,\ldots, y_d\in\R^d$ are not linearly independent and $\be_1,
\ldots, \be_d\in\R$ not all zero such that $\be_1y_1+\ldots+\be_dy_d=0$. Then
$\be_1(y_1-x_1)+\ldots+\be_d(y_d-x_d)=-(\be_1x_1+\ldots+\be_dx_d)$, hence
$\|\be_1x_1+\ldots+\be_dx_d\|=\|\be_1(y_1-x_1)+\ldots+\be_d(y_d-x_d)\|\leq|\be_1|\|y_1-x_1\|+\ldots+|\be_d|\|y_d-x_d\|\leq\eps(|\be_1|+\ldots+|\be_d|)$
denoting $a_i=\frac{\be_i}{|\be_1|+\ldots+|\be_d|}$, for $1\leq i\leq d$, we have
$|a_1|+\ldots+|a_d|=1$ and $\|a_1x_1+\ldots+a_dx_d\|\leq\eps$. Let $\{a=(a_1, \ldots, a_d)\in\R^d:
|a_1|+\ldots+|a_d|=1 \}$ and $f: E\rightarrow\R$ such that $f(a_1, \ldots,
a_d)=\|a_1x_1+\ldots+a_dx_d\|$. It is easy to see that $E$ is compact and $f$ is continuous.
Observe that for every $n\in\N$ there exist $a_n\in E$ such that $f(a_n)<\frac{1}{n}$, hence
$\inf_{a\in E}f(a)=0$. Since $E$ is compact there exist $a\in E$ such that $f(a)=0$, but it means
that $a_1x_1+\ldots+a_dx_d=0$ and it is contradiction with fact that $x_1, \ldots, x_d\in\R^d$ are
linearly independent.
\end{proof}

\begin{lem}
Under the assumptions of Theorem \ref{nondegC}, there exist the set $\{w_1,\ldots,$  $w_d\}$ of
vectors spanning $\R^d$ and an increasing sequence $(s_n)_{n\in\N}\subseteq\R$ converging to $\al$
such that
\begin{align*}
\lim_{n\rightarrow\8}\left(1-\ka(s_n)\right)\E\left(\left|\langle Wv+v,
w_i\rangle\right|^{s_n}\right)=0,
\end{align*}
for every $1\leq i\leq d$.
\end{lem}
\begin{proof}
On one side, $C_{\al}(v)=0$ hence
\begin{align*}
\int_{\Ss^{d-1}}\E\left(\left|\langle Wv+v, w\rangle\right|^{\al}-\left|\langle Wv,
w\rangle\right|^{\al}\right)\si_{\La}(dw)=0.
\end{align*}
On the other side, since $Wv=^dM^{\ast}(Wv+v)$, we have
\begin{align*}
0&=\lim_{s\nearrow\al}\int_{\Ss^{d-1}}\E\left(\left|\langle Wv+v, w\rangle\right|^{s}-\left|\langle
Wv, w\rangle\right|^{s}\right)\si_{\La}(dw)\\
&=\lim_{s\nearrow\al}\int_{\Ss^{d-1}}\E\left(\left|\langle Wv+v, w\rangle\right|^{s}-\left|\langle
M^{\ast}(Wv+v), w\rangle\right|^{s}\right)\si_{\La}(dw)\\
&=\lim_{s\nearrow\al}\int_{\Ss^{d-1}}\left(1-\ka(s)\right)\E\left(\left|\langle Wv+v,
w\rangle\right|^{s}\right)\si_{\La}(dw).
\end{align*}
By assumption we know that there are $x_1,\ldots, x_d\in\supp\si_{\La}$ such that span $\R^d$. By
Lemma \ref{linearlem} there exist $\eps>0$ such that for any $y_1\in B_{\eps}(x_1),\ldots, y_d\in
B_{\eps}(x_d)$, the set $\{y_1,\ldots y_d\}$ is linearly independent. Since
$\si_{\La}(\Ss^{d-1}\cap B_{\eps}(x_i))>0$ for every $1\leq i\leq d$ and
\begin{align*}
\lim_{s\nearrow\al}\int_{\Ss^{d-1}\cap B_{\eps}(x_i)}\left(1-\ka(s)\right)\E\left(\left|\langle
Wv+v, w\rangle\right|^{s}\right)\si_{\La}(dw)=0,
\end{align*}
then there exist an increasing sequence $(s_n^i)_{n\in\N}\subseteq\R$ converging to $\al$ such that
\begin{align*}
\lim_{n\rightarrow\8}\left(1-\ka(s_n^i)\right)\E\left(\left|\langle Wv+v,
w\rangle\right|^{s_n^i}\right)=0,
\end{align*}
for $\si_{\La}$--almost every $w\in\Ss^{d-1}\cap B_{\eps}(x_i)$. Now it is easy to see that we can
choose an universal increasing sequence $(s_n)_{n\in\N}\subseteq\R$ converging to $\al$ and
$w_i\in\Ss^{d-1}\cap B_{\eps}(x_i)$ such that
\begin{align*}
\lim_{n\rightarrow\8}\left(1-\ka(s_n)\right)\E\left(\left|\langle Wv+v,
w_i\rangle\right|^{s_n}\right)=0,
\end{align*}
for every $1\leq i\leq d$.
\end{proof}
\begin{proof}[Proof of Theorem (\ref{nondegC})]
In view of the previous Lemma we know that there exist an increasing sequence
$(s_n)_{n\in\N}\subseteq\R$ converging to $\al$ and vectors $w_1,\ldots, w_d\in\Ss^{d-1}$ spanning
$\R^d$ such that
\begin{align*}
\lim_{n\rightarrow\8}\left(1-\ka(s_n)\right)\E\left(\left|\langle Wv+v,
w_i\rangle\right|^{s_n}\right)=0,
\end{align*}
for every $1\leq i\leq d$. This implies that
\begin{align*}
0=\lim_{n\rightarrow\8}\left(1-\ka(s_n)\right)\E\left(\left|Wv+v\right|^{s_n}\right)&=\lim_{n\rightarrow\8}\E\left(|Wv+v|^{s_n}-|Wv|^{s_n}\right)\\
&=\E\left(|Wv+v|^{\al}-|Wv|^{\al}\right).
\end{align*}
and the proof is completed.
\end{proof}
\section*{Acknowledgements}
The results of this paper are part of the author's PhD thesis, written under the supervision of
Prof. Ewa Damek at the University of Wroclaw. %from whom I learned all I know on stochastic recursions.
I wish to thank her for many stimulating conversations and several helpful suggestions during the
preparation of this paper. I would like also to thank Dariusz Buraczewski for beneficial
discussions and comments.

\ \\
\textsc{University of Wroclaw,\\
Institute of Mathematics,\\
Plac Grunwaldzki 2/4,\\
Wroclaw 50--384 , Poland,}\\
\verb"mirek@math.uni.wroc.pl"
\end{document}